\documentclass[openany, 12pt]{amsart} 

\usepackage{geometry} 
\usepackage{etoolbox}

\patchcmd{\tocchapter}{#1}{\MakeUppercase{#1}}{}{}

\makeatletter
\patchcmd{\@tocline}
  {\hfil}
  {\leaders\hbox{\,.\,}\hfil}
  {}{}

\newcommand{\R}{\mathbb{R}}  
\newcommand{\Z}{\mathbb{Z}}
\newcommand{\N}{\mathbb{N}}
\newcommand{\Q}{\mathbb{Q}}
\newcommand{\C}{\mathbb{C}}

\newcommand{\F}{\mathbb{F}}

\usepackage{mathrsfs}
\usepackage{hyperref}
\hypersetup{
    colorlinks=false,
    linkcolor=black,
    filecolor=blue,      
    urlcolor=blue,
    citecolor=black
    }
\DeclareFontFamily{U}{mathx}{}
\DeclareFontShape{U}{mathx}{m}{n}{<-> mathx10}{}
\DeclareSymbolFont{mathx}{U}{mathx}{m}{n}
\DeclareMathAccent{\widehat}{0}{mathx}{"70}
\DeclareMathAccent{\widecheck}{0}{mathx}{"71}
\usepackage[table, dvipsnames]{xcolor}
\usepackage{booktabs} 
\usepackage{array} 
\usepackage{paralist} 
\usepackage{verbatim} 
\usepackage{graphicx} 
\usepackage{tikz-cd}
\usepackage{siunitx}
\usepackage{amsfonts}
\usepackage{amssymb}
\usepackage{enumitem}
\usepackage{moreenum}
\usepackage{mathtools}
\usepackage{upgreek}
\usepackage{physics}
\usepackage{amssymb,amscd,amsmath,amsthm,url}
\usepackage{thmtools}
\usepackage{caption}
\usepackage{subcaption}
\usepackage{esint}
\usepackage{sseq}
\usepackage[nameinlink]{cleveref}
\usepackage{needspace}
\setlist[enumerate]{label=\textbf{\arabic*.}, leftmargin = 0.8cm}

\numberwithin{equation}{section}

\usepackage{cancel}

\newcommand{\pd}{\partial}
\newcommand{\can}{\mathrm{can}}
\newcommand{\pre}{\mathrm{pre}}
\newcommand{\w}{\omega}
\newcommand{\std}{\mathrm{std}}

\newcommand\ep{\varepsilon}

\newcommand{\GL}{\textup{GL}}

\newcommand{\U}{\textup{U}}

\newcommand{\Sp}{\textup{Sp}}

\newcommand\Hom{\textup{Hom}}

\newcommand{\ind}{\textup{ind}}
\newcommand{\im}{\operatorname{im}}
\newcommand{\coker}{\operatorname{coker}}
\newcommand{\vol}{\mathrm{vol}}
\setlist[itemize]{leftmargin=*}

\renewcommand{\bar}{\overline}
\renewcommand{\tilde}{\widetilde}
\renewcommand{\hat}{\widehat}
\usepackage{stmaryrd}

\newcommand{\ECH}{\mathrm{ECH}}
\newcommand{\Alt}{\mathrm{Alt}}
\newcommand{\tame}{\mathrm{tame}}

\numberwithin{equation}{section}

\newtheorem{theorem}{Theorem}[section]
\newtheorem{proposition}[theorem]{Proposition}
\newtheorem{corollary}[theorem]{Corollary}
\newtheorem{lemma}[theorem]{Lemma}
\newtheorem{conjecture}[theorem]{Conjecture}
\newtheorem{maintheorem}{Theorem}

\theoremstyle{definition}
\newtheorem{definition}[theorem]{Definition}
\newtheorem{remark}[theorem]{Remark}
\newtheorem{example}[theorem]{Example}
\newtheorem{question}[theorem]{Question}

\allowdisplaybreaks

\usepackage{adjustbox}

\usepackage{csquotes}

\definecolor{mypink2}{RGB}{255, 186, 239}
\definecolor{myyellow}{RGB}{255, 227, 89}
\definecolor{mygreen}{RGB}{178, 252, 141}

\makeatletter
\def\l@subsection{\@tocline{2}{0pt}{2.5pc}{5pc}{}}
\makeatother

\begin{document}
\title[The ECH and Alt Capacities of Closed Symplectic 4-Manifolds]{The ECH and Alternative ECH Capacities\\ of Closed Symplectic 4-Manifolds}

\author{Gabriel Beiner}
\address{Gabriel Beiner, University of California Berkeley}
\email{gabriel.beiner@berkeley.edu}
\begin{abstract}
	We show that the ECH capacities of every closed symplectic 4-manifold are infinite. We also give the first examples of symplectic 4-manifolds whose alternative ECH capacities are infinite. We verify the ECH Weyl law holds with bounded subleading asymptotics for the alternative ECH capacities of any closed symplectic 4-manifold with $b_2^+=1$ or any smooth domain therewithin. 
\end{abstract}

\maketitle

{ \setcounter{tocdepth}{2}
\tableofcontents  }

\section{Introduction}
\subsection{Results on the ECH capacities}
If $(X,\w)$ is a symplectic 4-manifold, one can associate to it a collection of numerical invariants called the \emph{ECH capacities}:
\[0<c_1^{\ECH}(X,\w) \leq c_2^{\ECH}(X,\w) \leq \cdots \leq c_k^\ECH(X,\w) \leq \cdots \leq +\infty.\]
These were introduced by Hutchings \cite{HutQ} using his theory of embedded contact homology (ECH). The ECH capacities are foremost a tool to study embedding problems, since a symplectic embedding $(X,\w) \hookrightarrow (X',\w')$ implies $c_k^\ECH(X,\w) \leq c_k^\ECH(X',\w')$ for all $k$. 

The ECH capacities are quite powerful, for example they recover the optimal obstructions to embedding a union of concave toric domains into a convex toric domain \cite{CG}; in particular they completely determine symplectic embeddings between ellipsoids, which is already a surprisingly subtle problem. The ECH capacities of a Liouville domain, when finite, also satisfy a Weyl law \cite{Weyl}, which shows they asymptotically recover the volume of $(X,\w)$. The capacities are defined in terms of the Reeb dynamics of contact 3-manifolds and as such are also closely related to the study of 3-dimensional Reeb and Hamiltonian flows (see e.g. \cite{CGH,Iri,IriEqui,Bei}).

Computing the ECH capacities can be a formidable challenge, and is really only feasible when $(X,\w)$ is a Liouville domain with a highly symmetric Reeb flow on its boundary. The capacities are now known for convex and concave toric domains \cite{HutQ, CCGFHR}, and the unit cotangent disk bundles of $S^2$ \cite{FR, FRV}, $\R P^2$ \cite{FR}, $T^2$ \cite{HutQ}, and the Klein bottle \cite{MR} with respect to nice metrics. 

For general symplectic 4-manifolds, we can ask the more basic question of whether the ECH capacities are even finite. Recently, it was shown that the capacities are infinite for many examples, including cotangent disk bundles of higher genus surfaces and certain closed symplectic 4-manifolds like the 4-torus, while they are finite for cotangent disk bundles of non-orientable surfaces \cite{Bei}. It has remained open to determine the ECH capacities of even very simple closed symplectic 4-manifolds, like $\C P^2$ and $S^2 \times S^2$ (see \cite[p.\!\! 2]{HutAlt}). In part, this is because the capacities of closed manifolds are defined indirectly as a supremum over the capacities of embedded Liouville domains.  We begin by showing the ECH capacities are infinite for every closed symplectic 4-manifold.

\begin{maintheorem}\label[theorem]{thm:closed_inf}
Let $(X,\w)$ be a closed symplectic 4-manifold. The ECH capacities of $(X,\w)$ are all infinite.
\end{maintheorem}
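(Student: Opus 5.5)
Since the ECH capacities of a closed $(X,\w)$ are defined as the supremum of $c_k^\ECH(W,\w|_W)$ over Liouville domains $(W,\w|_W)\subset(X,\w)$, the plan is to produce, for every $k$, embedded Liouville domains $W_n\subset X$ with $c_k^\ECH(W_n)\to\infty$. Monotonicity $c_1\le c_k$ means it suffices to make $c_1^\ECH(W_n)$ unbounded. Since $c_k^\ECH(W)\le c_k^\ECH(X)$ while the volume of $X$ is fixed, the domains cannot be balls or convex toric domains, where the Weyl law bounds capacities by volume. Instead I would use the mechanism from \cite{Bei}: the capacities are infinite when the ECH of the boundary contact manifold has no class $\sigma$ with $U^k\sigma=[\emptyset]$, or when such classes are only represented by orbit sets of large action. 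In particular, this happens for cotangent disk bundles of higher genus orientable surfaces.

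\textbf{Step 1 (local model).} Take a Darboux ball $B\subset X$ and embed inside it a small copy of $D^*_\epsilon\Sigma_g$ for a surface $\Sigma_g$ of genus $g\ge 2$. The embedding is a Weinstein neighbourhood of a Lagrangian $\Sigma_g\subset B$. Orientable Lagrangians of genus $\ge 2$ embed in $\R^4$ only if they are not tori or spheres, and genus $\ge2$ is allowed by Euler characteristic constraints, since $\chi(L)=0$ is needed only for Lagrangians in $\C^2$ — so in fact only tori embed. This fails, so I would replace it with a Lagrangian torus $T^2\subset B$. By \cite{Bei}, the capacities of $D^*T^2$ with a flat metric are infinite.

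\textbf{Step 2 (stretching).} A small Weinstein neighbourhood $D^*_\epsilon T^2$ rescales to $\epsilon D^*T^2$, and conformal scaling multiplies capacities by $\epsilon$, so $\infty\cdot\epsilon=\infty$. More carefully, \cite{Bei} should give that $c_k^\ECH(D^*_\epsilon T^2)=\infty$ for every $\epsilon>0$: the boundary is the unit cotangent bundle of $T^2$, and the cobordism map to the empty set never hits $[\emptyset]$ via $U^k$, because of the homology class/grading obstruction. I would check that the relevant statement in \cite{Bei} is metric-independent, or at least holds for flat tori, which is what a Weinstein neighbourhood of a Clifford-type torus in $B$ provides.

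\textbf{Step 3 (conclusion).} Monotonicity under the Liouville embedding $D^*_\epsilon T^2\subset X$ then gives $c_k^\ECH(X,\w)\ge c_k^\ECH(D^*_\epsilon T^2)=\infty$ for all $k$. This argument only uses a Darboux chart, so it applies to every closed $X$.

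The main obstacle is Step 2: ensuring that the infinitude result of \cite{Bei} applies to the Liouville domain as embedded. The Liouville structure on $D^*T^2$ must agree with the one used in the definition of capacities for closed manifolds, and the definition must allow arbitrary Liouville subdomains rather than only, say, domains with restricted topology. If the definition for closed manifolds requires extra conditions on embedded domains, I would instead try to find a flat Lagrangian torus whose neighbourhood does satisfy those conditions.
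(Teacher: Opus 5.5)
\textbf{There is a genuine gap, at Step 2.}

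The ECH capacities of the unit codisk bundle of a flat torus are \emph{finite}. Hutchings computed them explicitly in \cite{HutQ}, and the introduction of the paper lists $T^2$ among the cotangent disk bundles whose capacities are known. The infinitude results of \cite{Bei} concern orientable surfaces of genus $\geq 2$, whose geodesic flows are Anosov. For $T^2$ there is no homology or grading obstruction of the kind you invoke.

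More fundamentally, no construction confined to a Darboux chart can work. If $W\subset B\cong B^4(a)$, monotonicity gives
\[
c_k^{\ECH}(W)\leq c_k^{\ECH}(B^4(a))=da<\infty,
\]
where $d$ is the integer with $d^2+d\leq 2k\leq d^2+3d$. So every local model has capacities bounded by those of a small ball. Your own Weyl-law heuristic should have flagged this: the domains must see the global topology of $X$.

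Even globally, you cannot in general expect a single Liouville subdomain with infinite capacities, which is what Steps 1--3 aim for. In $\C P^2$, a closed orientable Lagrangian $L$ has $\w([L])=0$, hence $[L]=0$ and $\chi(L)=-[L]^2=0$, so $L$ is a torus. Moreover, by Mark--Tosun and Chen \cite{MT,CheU}, every Weinstein subdomain of $\C P^2$, in particular every $D^\ast_\ep L$, has finite ECH capacities. What is needed is what your opening sentence says: a sequence of domains $W_n$ with finite but unbounded $c_1^{\ECH}(W_n)$.

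The paper obtains this from Donaldson divisors.
\begin{itemize}
\item For integral $\w$, the complement of a divisor $\Sigma_k$ Poincar\'e dual to $k[\w]$ is a Liouville domain. Its boundary is the prequantization bundle of Euler number $-[\Sigma_k]^2=-2k^2\vol(X,\w)$, with Reeb period $1/k$ on the fibres.
\item The fibre generates the torsion summand of $H_1$, so a null-homologous orbit set needs total multiplicity at least $2k^2\vol(X,\w)$. This gives $c_1^{\ECH}\geq 2k\vol(X,\w)\to\infty$.
\item A disconnected divisor yields a domain with disconnected boundary, which is handled by \cite{Bei}.
\item For irrational $\w$, the same mechanism runs with Opshtein's singular divisors \cite{Ops}. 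One chooses $[\Sigma_i]\in K\cdot H_2(X;\Z)$, so that $H_1$ of the plumbing boundary forces every null-homologous orbit set to have action of order at least $K$.
\end{itemize}
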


This result has been independently proven by Chen \cite{CheU} under the additional assumption that the symplectic form is rational.\footnote{In the first version of this paper, which appeared around the same time as Chen's work, we also assumed the symplectic form was rational. Subsequently, we figured out additional arguments to remove that assumption.} Recall we say a symplectic form $\w$ is \emph{rational} if $\lambda \w$ lives in the integral cohomology lattice for some $\lambda>0$.

\begin{remark} \Cref{thm:closed_inf} should not be thought to imply that ECH is totally oblivious to symplectic embedding problems into closed 4-manifolds. Indeed, Hutchings \cite{HutTQFT} has introduced \emph{completed ECH capacities}, which see more of the ECH cobordism map than the ECH capacities. These completed capacities will be finite for some closed symplectic 4-manifolds like $\C P^2$ and $S^2\times S^ 2$ and provide  obstructions to symplectic embeddings into them.	 See also the results of \cite{CW} which imply the ECH capacities obstruct embeddings of star-shaped domains into certain algebraic surfaces.
\end{remark}

One can prove a stronger result under topological assumptions on $(X,\w)$. \begin{maintheorem}\label[theorem]{thm:posinf}
	Suppose $(X,\w)$ is a closed rational symplectic 4-manifold which satisfies $c_1(TX,\w)\cdot [\w]\leq 0$. Then $(X,\w)$ admits an embedded Liouville subdomain whose ECH capacities are infinite.
\end{maintheorem}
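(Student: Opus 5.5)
Since $\w$ is rational, after rescaling I may assume $[\w]$ is integral. Donaldson's theorem then gives, for $k$ large, a symplectic surface $\Sigma\subset X$ Poincar\'e dual to $k[\w]$. The complement $W=X\setminus \nu(\Sigma)$ of a small standard tubular neighborhood is a Liouville domain. Concretely, $\w$ is exact on $X\setminus\Sigma$, and a primitive can be chosen to agree near $\Sigma$ with the standard radial Liouville form on the normal disk bundle with its orientation reversed. The boundary $Y=\partial W$ is a circle bundle over $\Sigma$ of Euler number $-\Sigma\cdot\Sigma=-k^2[\w]^2<0$. Its contact form is a prequantization-type form: the Reeb flow is (after perturbation) the circle action along the fibers, with action equal to the area of a small normal disk. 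This is the subdomain I propose for the theorem. I will show its ECH capacities are infinite by showing that the ECH contact invariant and all the classes $U^j$ applied to it vanish. Equivalently, the $U$-tower through the class of the empty set $[\emptyset]$ is not hit by the cobordism map induced by $W$.

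\emph{Step 1 (reduction).} By definition, $c_k^\ECH(W)=\infty$ for all $k$ if the ECH cobordism map $\Phi_W\colon \ECH(Y)\to \ECH(\emptyset)=\F$ vanishes on every class $\sigma$ with $U^k\sigma\neq 0$ relevant to the definition. It suffices to show that $\Phi_W$ vanishes on the whole $U$-tower, or more simply that $[\emptyset]\in\ECH(Y)$ is zero and no element maps nontrivially under $U^k$ to it. Via Taubes' isomorphism, this becomes a Seiberg--Witten statement. The map $\Phi_W$ is computed by the SW invariant of $W$ glued to a concave cap. Gluing back $\nu(\Sigma)$, I relate this to the SW invariants of $X$ in spin-c structures $\mathfrak s_\w+E$ with $E$ supported near $\Sigma$.

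\emph{Step 2 (grading/adjunction).} The key input is the hypothesis $c_1(TX,\w)\cdot[\w]\le 0$, which I use via a grading argument. By Taubes' ``SW$\Rightarrow$Gr'' and the ECH index formula, any generator of $\ECH(Y)$ contributing to $\Phi_W$ in a nonzero class must come from a holomorphic current in the completion of $W$. I compute the ECH index of orbit sets in $Y$, which are fiber multiples with $d$ the total multiplicity. The index is roughly $I=d\,(c_1\text{-term}) + d^2 (\text{self-intersection})$, and its sign is governed by $\chi(\Sigma)+\Sigma\cdot\Sigma$ and $\langle c_1(X),\Sigma\rangle$. Here $\langle c_1,\Sigma\rangle=k\,c_1\cdot[\w]\le 0$, while $\Sigma\cdot\Sigma=k^2[\w]^2$ dominates the genus term by adjunction, $2g-2=\Sigma^2-\langle c_1,\Sigma\rangle$. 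The plan is to show that the classes $U^k$-reaching $[\emptyset]$ would require curves in $W$ of negative index or negative $c_1$-pairing. Curves in a completed Liouville domain must have positive energy and index at least $0$. With $c_1\cdot\w\le 0$, the only candidates are therefore excluded, and the $U$-map kills the image of $\Phi_W$, forcing $c_k=\infty$.

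\emph{Main obstacle.} The hard step is Step 2: making the index count rigorous and handling the case $c_1\cdot[\w]=0$ exactly. Degenerate Morse--Bott orbit sets must be perturbed, which needs the Nelson--Weiler/Farris type computation of ECH of prequantization bundles. The equality case must be handled by a strict inequality from the $d^2\Sigma^2$ term. I would first compute $\ECH(Y)$ with its $U$-map explicitly via the prequantization model, where $U$ acts as cap with a point on $\Sigma$. I would then identify $\Phi_W$ on the generators $\mathcal O_d$ using the known relation between ECH of the boundary of a disk bundle complement and the Gromov invariant of $X$ in classes $d[\Sigma^\vee]$. I expect that $c_1\cdot\w\le 0$ makes the relevant Gromov invariants (e.g., in $K$-related classes) vanish for the needed $U$-powers. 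If the direct index argument fails, I would fall back on Taubes' $\mathrm{SW}=\mathrm{Gr}$ together with wall-crossing for $b_2^+=1$.
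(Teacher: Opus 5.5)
There is a genuine gap. Your mechanism for infiniteness is not the right one, and the one quantitative step you do state points the wrong way.

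\textbf{The reduction.} Since $Y=\partial W$ bounds the Liouville domain $W$, it is strongly fillable. So its contact invariant $[\emptyset]\in\ECH(Y,\xi,0)$ is \emph{nonzero} and $\Phi_W[\emptyset]=1$; you cannot show it vanishes. What $c_k^{\ECH}(W)=\infty$ for all $k$ actually requires (using $c_1\le c_k$) is that $[\emptyset]\notin\im(U)$. This is a property intrinsic to $(Y,\lambda_{\pre})$, not a statement about $\Phi_W$.

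\textbf{The key inequality is reversed.} With $\langle c_1(TX,\w),[\Sigma]\rangle=k\,c_1(TX,\w)\cdot[\w]\le 0$, adjunction gives
\[
[\Sigma]^2=2g-2+k\,c_1(TX,\w)\cdot[\w]\le 2g-2 .
\]
So the genus term dominates the self-intersection, not conversely, and this is the whole point. Prequantization bundles with $e>2g-2$ are expected (and by Chen's work known) to have \emph{finite} ECH spectrum. An argument exploiting ``$[\Sigma]^2$ dominates'' is therefore aimed at the wrong regime. The boundary case $e=2g-2$, i.e.\ $c_1\cdot[\w]=0$, needs no extra strict inequality.

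\textbf{Step 2 is not an argument.} The $U$ map counts curves in $\R\times Y$, and the $\Z$-grading on null-homologous generators is unbounded above. Index bookkeeping therefore cannot decide whether $U\sigma=[\emptyset]$ for some $\sigma$. Your fallback through Gromov--Taubes or Seiberg--Witten invariants of $X$ cannot work either. Capping with $\nu(\Sigma)$ only tells you about the image of the cap's class, not about $[\emptyset]$. Moreover, for the $b_2^+=1$ examples of \Cref{ex:bad_uniruled} the invariants $SW_+(X,\mathfrak{s}_\w+\mathrm{PD}[n\w])$ are nonzero (Usher), so no vanishing argument is available, yet the theorem holds there.

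\textbf{The missing idea} is to turn $e\le 2g-2$ into a statement about fillings.
\begin{enumerate}
\item Since $0<e\le 2g-2$ (so $g\ge 2$), the circle bundle lies in the Milnor--Wood range. It therefore carries a foliation transverse to its fibres, i.e.\ transverse to the Reeb flow of $\lambda_{\pre}$.
\item Such a foliation is hypertaut. A modified Eliashberg--Thurston construction then produces a Liouville domain $[0,1]\times Y$ with $(Y,\lambda_{\pre})$ as one of its \emph{two} boundary components.
\item For a Liouville domain with disconnected boundary $Y_1\sqcup Y_2$, each component has infinite $c_1$. Roughly: the cobordism map $\Phi:\ECH(Y_1)\otimes\ECH(Y_2)\to\F$ intertwines the $U$ maps of the two factors, because the point class in a connected cobordism is well defined. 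So $U\sigma=[\emptyset]$ would give $1=\Phi([\emptyset]\otimes[\emptyset])=\Phi(\sigma\otimes U[\emptyset])=0$.
\item Hence $c_1(Y,\lambda_{\pre})=\infty$, and by conformality $c_k^{\ECH}(W)=\infty$ for all $k\ge 1$.
\item If $\Sigma$ is disconnected, $W$ itself already has disconnected boundary, so the same principle applies.
\end{enumerate}
If you want to keep your computational route instead, you need the actual $U$ map on ECH of prequantization bundles (Nelson--Weiler for the homology, Chen for $U$). You would then have to show $[\emptyset]\notin\im U$ exactly when $e\le 2g-2$, which is a substantial computation rather than an index count.
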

The condition $c_1(TX,\w)\cdot [\w]\leq 0$ applies to the large majority of symplectic 4-manifolds. If $(X,\w)$ instead satisfies $c_1(TX,\w) \cdot [\w]>0$, then $b_2^+(X)=1$, and $(X,\w)$ is symplectically uniruled and K\"ahler with Kodaira dimension $-\infty$ (see \cite{MS96} and the appendix of \cite{LN}).

We expect \Cref{thm:posinf} to hold without the rationality assumption. As a converse to \Cref{thm:posinf}, we conjecture the following, some evidence for which is obtained by Mark--Tosun \cite{MT}; we discuss this further after the proof of \Cref{thm:posinf}.
\begin{conjecture}\label{conj:posinf}
\Cref{thm:posinf} is sharp in the following sense. Suppose $(X,\Omega)$ is a closed symplectic 4-manifold with $c_1(TX,\Omega)\cdot [\Omega]>0$. If $(W,\w)$ is a four-dimensional Liouville domain with a symplectic embedding $(W,\w)\hookrightarrow (X,\Omega)$, then $c_k^{\ECH}(W,\w)<\infty$ for all $k$. 	
\end{conjecture}
\Cref{conj:posinf} holds when $(W,\w)$ is Weinstein by combining work of Mark--Tosun and Chen \cite{MT,CheU}.

\subsection{Results on the alternative ECH capacities}

Next, we study the \emph{alternative ECH capacities} of a symplectic 4-manifold $(X,\w)$:
\[0<c_1^{\Alt}(X,\w) \leq c_2^{\Alt}(X,\w) \leq \cdots \leq c_k^\Alt(X,\w) \leq \cdots \leq +\infty,\]
which were recently introduced by Hutchings \cite{HutAlt}. These capacities are defined by abstract max-min problems over the space of point-constrained holomorphic curves. They mimic the properties of the ECH capacities with a considerably simpler definition. For example, one can use these capacities to recover the optimal obstructions to symplectic ellipsoid embeddings, and to reprove the existence of at least two orbits and Irie's $C^\infty$ closing lemma for Reeb flows on the boundary of star-shaped domains in $\R^4$ and on the cotangent circle bundles of $S^2, T^2$, and $\R P^2$ (following the arguments of \cite{HutAlt,Iri, CGH, Hutqc}). The proofs require no Seiberg--Witten theory or the usual difficulties of ECH and SFT; they can be deduced just from Hutchings' ECH index inequality \cite{Hutindineq},  Gromov--Taubes compactness \cite{Tau}, and basic pseudoholomorphic curve analysis known since Gromov's first work (see the computations of \cite[\S B.2]{EdtPFH}). As such, understanding and computing the alternative capacities is highly desirable.

The alternative capacities are bounded above by the ECH capacities \cite{HutAlt}. In many cases the two agree, although some cases are known where they differ \cite[Rmk.\!\! 14]{HutAlt} (many more such cases follow from comparing \Cref{thm:closed_inf} with \Cref{thm:alt_fin}). In general, lower bounds on the alternative capacities are difficult to obtain (beyond those directly implied by the properties in \cite[Thm.\!\! 6]{HutAlt}), since they require ruling out the existence of holomorphic curves, even those not detected by enumerative and Floer-theoretic invariants. In \cite[Rem.\!\ 3.11]{Hut26}, the question was raised of whether the alternative ECH capacities are always finite. Here we show that the alternative capacities are sometimes infinite, at least for closed symplectic 4-manifolds.

To understand the statement, we first recall a well-known construction of Zehnder \cite{Zeh}. We study a constant coefficient symplectic form on $T^4= \R^4/\Z^4$ with coordinates $x_1,\ldots, x_4$:
\begin{equation}\label{eq:Zehtor}
	\w = \frac{1}{2}\sum_{i,j} A_{ij} \dd x_i\wedge \dd x_j,
\end{equation}
where $A$ is any invertible skew-symmetric $4\times 4$ matrix. If we restrict to the hypersurface $Y=\{x_4=c\}$, the symplectic form restricts to
\begin{equation}\label{eq:Zehhyp}
\w|_Y = A_{12} \dd{x_1}\wedge \dd{x_2} + A_{13} \dd{x_1}\wedge \dd{x_3} + A_{23} \dd{x_2}\wedge \dd{x_3}.	
\end{equation}
The 2-form $\w|_Y$ defines a \emph{(stable) Hamiltonian structure} on the 3-torus $Y$. Such a structure gives rise to canonical dynamics, namely a \emph{characteristic Hamiltonian flow} tangent to $\ker(\w|_Y)$.  If the components of the vector 
\begin{equation}\label{eq:Zehhypcond}
	(A_{12}, A_{13}, A_{23})\in \R^3 \quad \text{are linearly independent over $\Q$,}
\end{equation} then the characteristic flow is an irrational rotation of the 3-torus. This flow is minimal: every orbit is dense, and it has no periodic orbits. If one moves to a nearby hypersurface $\{x_4=c\pm \varepsilon\}$, the characteristic flow will be the same, and hence still have no periodic orbits. Thus, a symplectic form \eqref{eq:Zehtor} on $T^4$ satisfying \eqref{eq:Zehhypcond}, often called a \emph{Zehnder torus}, will fail the \emph{nearby existence property}, and its Hofer--Zehnder capacity will be infinite \cite[Ch.\!\! 4]{HZ}.

\begin{definition}
A  hypersurface $Y$ in a symplectic 4-manifold $(X,\w)$ is called a \emph{Zehnder torus hypersurface} if $Y$ is a 3-torus and there are coordinates $x_1,x_2,x_3: Y\to S^1$ on $Y$ so that $\w|_Y$ has the form \eqref{eq:Zehhyp} with coefficients satisfying \eqref{eq:Zehhypcond}.  
\end{definition}

Almost every constant coefficient symplectic form on the $4$-torus admits Zehnder torus hypersurfaces. Following a construction of Usher \cite{Ush}, one can also often find these hypersurfaces by starting with a symplectic manifold obtained by a Gompf fibre sum along symplectic tori with trivial normal bundle and perturbing the symplectic form near the sum region. In particular, there will be symplectic forms on any simply connected elliptic surface with $b_2^+>1$ that have Zehnder torus hypersurfaces.

\begin{maintheorem}\label[theorem]{thm:alt_inf}
Let $(X,\w)$ be a closed symplectic 4-manifold containing a Zehnder torus hypersurface. Then the alternative ECH capacities of $(X,\w)$ are infinite.
\end{maintheorem}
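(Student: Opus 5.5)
The idea is to turn the absence of closed characteristics near a Zehnder torus hypersurface into the non-existence of the point-constrained curves that enter the max-min definition of $c_k^\Alt$. Recall from \cite{HutAlt} that $c_k^\Alt(X,\w)$ is defined as the supremum, over tame (or generic) almost complex structures $J$ and point configurations, of the minimal energy of $J$-holomorphic curves through $k$ generic points. For a closed manifold, these curves are closed curves of ECH index $2k$ (or a suitable generalization). To show $c_k^\Alt=\infty$, it therefore suffices to produce, for each $E>0$, a family of almost complex structures for which every holomorphic curve through the chosen points has energy at least $E$, or for which no such curve exists at all.

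The plan is a \emph{neck-stretching} argument along the Zehnder hypersurface. First I would use the Hamiltonian structure on $Y=\{x_4=c\}$ to build a collar $Y\times(-\delta,\delta)$ on which $\w$ is the standard stable Hamiltonian model: in Zehnder coordinates, $\w = \w|_Y + \dd x_4\wedge\lambda$ for a constant 1-form $\lambda$ on $Y$ with $\lambda(R)\ne 0$, where $R$ is the characteristic field. I would then take $J_R$ to be compatible and translation-invariant on a neck of length $R$, stretching the collar. Place one of the $k$ point constraints inside the neck (or on either side, as convenient). Any $J_R$-curve through the constraints that crosses the neck must, by SFT/Gromov--Taubes compactness for stable Hamiltonian structures as $R\to\infty$, converge to a building. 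That building's symplectization-level pieces in $\R\times Y$ must be asymptotic to closed characteristics of $\ker \w|_Y$, or have bounded energy and converge to a curve in $\R\times Y$ with no ends.

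Since the flow is an irrational rotation, there are no closed orbits. Any limit level in $\R\times Y$ would therefore be a closed holomorphic curve in $\R\times Y$. Such a curve has zero $\lambda$-energy, hence it is tangent to $\ker\lambda$ and must be constant, which contradicts passing through a point. Similarly, the top and bottom levels cannot be glued along orbits. The curves therefore either avoid the neck entirely, or their energy must blow up with $R$. A curve through a constraint point placed in the middle of the neck cannot avoid it, and a monotonicity argument shows that a curve of bounded $\w$-energy crossing a long neck must have a low-energy limit of the forbidden type. Hence the minimal energy of curves through the constraints tends to $\infty$ as $R\to\infty$. Because the collar has fixed symplectic size, stretching is implemented by varying $J$ only, with $\w$ fixed. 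This is exactly the $J$-flexibility that the supremum in $c_k^\Alt$ allows, so $c_k^\Alt(X,\w)=\infty$.

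The main obstacle is the compactness step. Energy here is measured by $\w$, not by a Hofer energy adapted to a stable Hamiltonian structure, so I must check that bounded $\w$-energy gives bounded Hofer-type energy in the neck. For the stable Hamiltonian model this should follow from $\w = \w|_Y + \dd x_4\wedge\lambda$ with $\w|_Y$ closed and $\lambda$ closed; I would verify that $\lambda$ is closed, which holds since it is a constant form on the torus. The argument must also apply Gromov--Taubes/SFT compactness for Hamiltonian structures without periodic orbits, where one has to rule out nonconstant limits with no asymptotic ends. I expect this to use the fact that the $\lambda$-energy of such a limit vanishes, which forces $u$ to lie in a single leaf of the foliation $\ker\lambda\cap\ker\w|_Y$ direction plus the $\R$-direction. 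The minimality of the flow then excludes compact leaves.
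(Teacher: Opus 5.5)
You have the right ingredients (a stable Hamiltonian collar, neck-stretching with compatible $J$ and $\w$ fixed, and bounded-energy curves crossing a long neck forcing a closed orbit), but the step meant to force a point-constrained curve to \emph{cross} the neck is false as written. From $\int_\Sigma u^\ast\dd(t\lambda)=0$ and pointwise non-negativity of $u^\ast(\dd{t}\wedge\lambda)$ you do get that a closed curve in the neck lies in a slice $\{t\}\times Y$ and is tangent to $\ker\lambda$. But ``hence constant'' is contact-form intuition. Here $\lambda$ is closed, so $\ker\lambda$ is an \emph{integrable} plane field, and a closed curve tangent to it is a cover of a compact leaf.

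Take the framing $\lambda=\dd{x_3}$, which the paper uses, or any constant form proportional to a rational one. Every $(\lambda,\w)$-compatible $J$ preserves $\ker\lambda=\mathrm{span}(\pd_{x_1},\pd_{x_2})$. So each torus $\{t\}\times\{x_3=c\}$ is an embedded closed $J$-holomorphic curve of energy $|A_{12}|$, and one passes through every point of the neck, however long it is. Minimality of $R$ does not exclude these tori; they coexist with the irrational flow. With $p$ in the neck you therefore get $\inf\mathcal{E}\leq |A_{12}|$ for every stretching parameter: meeting the neck is not crossing it. With $p$ outside the neck, nothing forces the curve into it at all.

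The paper supplies this step with \Cref{lem:fredholm}. It stretches two necks $V_\pm$ on either side of a region $W\ni p$ and perturbs $J$ generically on $W$. A simple curve through a generic point then has $\ind(u)\geq 2$, hence $I(A)=A\cdot A+\langle c_1(TX,\Omega),A\rangle\geq 2$ by adjunction. Every class carried by the collar $(-\ep,\ep)\times T^3$ has $\langle c_1,A\rangle=0$, since $TX$ is symplectically trivial there, and $A\cdot A=0$, since the collar is displaceable in $t$; so $I(A)=0$. Hence every curve through $p$ leaves the collar and crosses $V_+$ or $V_-$. Only then does the crossing argument apply; the paper runs it with Taubes' compactness for currents and Hutchings' translation trick rather than SFT compactness.

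Alternatively, you could rescue your own route without transversality. Choose the constant framing with $\lambda(R)>0$ but with $\ker\lambda$ an irrational plane, e.g.\ $\lambda=\dd{x_3}+\alpha\,\dd{x_1}$ with $\alpha$ small and irrational. Then every leaf of $\ker\lambda$ is non-compact. A closed curve in the neck would lie in one leaf, and by the open mapping theorem its image would be open and compact, hence the whole leaf, which is impossible. So a curve through the midpoint of the neck genuinely must traverse a half-neck of length $R$. Either way, this step needs an actual argument, and the one you give does not work.
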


After the proof of \Cref{thm:alt_inf}, we introduce what we call the \emph{tame elementary capacities}
\[0<c_1^\tame(X,\w)\leq c_2^\tame(X,\w)\leq \cdots\leq c_k^\tame(X,\w) \leq\cdots \leq  +\infty\] and we show they are infinite for closed symplectic 4-manifolds in quite a bit of generality (see \Cref{prop:tame}). 

To conclude, we contrast \Cref{thm:closed_inf,thm:alt_inf} by observing that the alternative ECH capacities are finite for many closed examples. This is likely known to experts in some form, although we are not aware of anywhere that the Weyl law or its subleading asymptotics are written down.

\begin{maintheorem}\label[theorem]{thm:alt_fin}
	Let $(X,\w)$ be a closed symplectic 4-manifold with $b_2^+(X)=1$. Then the alternative ECH capacities are finite and obey a Weyl law with bounded error term:
	\begin{equation}\label{eq:Weyl}
		c_k^{\Alt}(X,\w) = \sqrt{4\mathrm{vol}(X,\w)k} +O(1) \quad (k\to \infty).
	\end{equation}
\end{maintheorem}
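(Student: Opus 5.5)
We prove matching upper and lower bounds of the form $\sqrt{4\vol(X,\w)k}+O(1)$. The upper bound comes from the existence of holomorphic curves through many points, via Gromov--Taubes invariants. The lower bound comes from the abstract properties of $c_k^\Alt$, chiefly monotonicity under symplectic embeddings and disjoint unions, applied to a packing by balls or ellipsoids.

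\textbf{Upper bound.} Recall that $c_k^\Alt(X,\w)$ is a max over $J$ (tame or compatible) and $k$ points of a min over $J$-holomorphic curves through those points of their energy/area. It therefore suffices to produce, for every $J$ and every set of $k$ generic points, a $J$-curve through them whose area is at most $\w\cdot A_k$ for a fixed class $A_k$.

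We use Taubes' Gromov invariant $\mathrm{Gr}$. Since $b_2^+=1$, the wall-crossing formula for Seiberg--Witten invariants together with SW$=$Gr shows that $\mathrm{Gr}(A)\neq 0$ for every class $A$ with $A\cdot[\w]$ sufficiently large, $A^2$ large, and $A$ in the positive cone near $N[\w]$. Indeed, for $b_2^+=1$ the invariant of $A$ or of $K-A$ is nonzero, and the latter vanishes for area reasons. The index is
\[
I(A)=\tfrac12\bigl(A\cdot A-K\cdot A\bigr),
\]
so a nonzero invariant gives, for every generic $J$, a possibly disconnected $J$-curve in class $A$ through any $I(A)$ points. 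Gromov compactness extends this to all tame $J$ and all point configurations.

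If $\w$ is rational, we take $A$ a lattice class close to $t[\w]$. If not, we approximate $[\w]$ by classes in $H^2(X;\Z)$. We then minimize $\w\cdot A$ subject to $I(A)\ge k$. Writing $A\approx t[\w]$ gives
\[
I(A)\approx \tfrac12\bigl(t^2\,[\w]^2 - t\,K\cdot[\w]\bigr), \qquad [\w]^2 = 2\vol .
\]
Solving $t^2\vol \approx k$ yields $\w\cdot A\approx t[\w]^2\approx \sqrt{4\vol\, k}+O(1)$. The $O(1)$ error comes from the linear $K$ term and from lattice rounding.

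\textbf{The main obstacle.} Controlling the rounding with only $O(1)$ error requires choosing integral classes $A_k$ with $|\w\cdot A_k - \sqrt{4\vol k}|$ bounded while keeping $I(A_k)\ge k$. I would first try $A_k = mL + B$, where $L$ is a fixed integral class with $L\cdot[\w]>0$ and $L^2>0$. Quadratic growth in $m$ of $I$ then makes consecutive $k$'s differ in area by $O(1/\sqrt k)$ increments. When $L$ is not proportional to $[\w]$, the wrong leading coefficient has to be fixed. I would do this by choosing classes from a two-parameter family in the positive cone, or by reducing to a rational form and then using monotonicity under the $(1+\epsilon)$-scaling inclusion in a Moser argument. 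The latter costs a multiplicative error $\epsilon\sqrt k$, so the irrational case would instead need a direct lattice-point count in the positive cone region. This careful count is the delicate step.

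\textbf{Lower bound.} We use that $c_k^\Alt$ dominates the alternative capacities of symplectically embedded subdomains, and that these agree with the ECH values for ellipsoids and balls. By Darboux, and by packing results for $b_2^+=1$ (or by a finite union of balls filling volume up to $\epsilon$), we get
\[
c_k^\Alt(X)\ge \sqrt{4(\vol-\epsilon)k}-C_\epsilon .
\]
This yields a Weyl law but not the bounded error. For an $O(1)$ lower bound, I would instead use a ball packing of full volume, which exists for rational forms in rational and ruled surfaces by McDuff--Polterovich and Biran. I would then apply the disjoint-union property together with the ball sequence, whose error is $O(1)$ per ball, with finitely many balls. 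The non-full-packing cases would be handled by the sharper statement that the class $A$ pairing constraint in the upper bound is realized by curves with a matching index inequality, giving $c_k^\Alt\ge \inf\{\w\cdot A : I(A)\ge k\}-O(1)$ through the ECH index inequality.
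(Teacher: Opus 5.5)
Your architecture is right: Gromov--Taubes curves give the upper bound, and balls or an index count give the lower bound. In the rational case your upper bound is essentially the paper's: rescale so $[\w]$ is integral, use $A=n\,\mathrm{PD}[\w]$, and compare consecutive $n$.

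\textbf{The gap: the upper bound for irrational $[\w]$.} You leave this open as ``the delicate step''. The fixes you float do not work. A fixed integral direction $L$ gives leading coefficient $[\w]\cdot L/\sqrt{(L\cdot L)([\w]\cdot[\w])}>1$, by reverse Cauchy--Schwarz for $b_2^+=1$, so it fails even the leading term. The $(1+\epsilon)$-rescaling costs $\epsilon\sqrt{k}$, as you note.

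No lattice-point count is needed. The missing idea is to let $B_n$ be the integral class obtained by rounding the coordinates of $n[\w]$ in an integral basis, so that $\|B_n-n[\w]\|\le\frac12\sqrt{b_2}$ uniformly. Then:
\begin{itemize}
\item $[\w]\cdot B_{n+1}-[\w]\cdot B_n\le C$ for a fixed constant $C$.
\item The intersection form is negative definite on $[\w]^\perp$. So the defect $([\w]\cdot B_n)^2-([\w]\cdot[\w])(B_n\cdot B_n)$ only sees the orthogonal component of the bounded vector $B_n-n[\w]$, and is $O(1)$. The paper settles for the weaker bound $\le Dn$, which suffices.
\end{itemize}
Set $j_n=\frac12(B_n\cdot B_n+c_1\cdot B_n-b_1)$. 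For $j_n<j\le j_{n+1}$ you get
\[
e_j^{\Alt}(X,\w)\le \frac{([\w]\cdot B_n+C)^2-4j_n\vol(X,\w)}{[\w]\cdot B_n+\sqrt{4j\vol(X,\w)}},
\]
whose numerator is $O(n)$ and whose denominator is of order $n$.

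\textbf{Non-vanishing for the classes $B_n$.} Your sketch needs more than it says.
\begin{itemize}
\item $B_n/n$ must be the class of a symplectic form $\w_n$ deformation equivalent to $\w$, with the same $c_1$ and the same chamber, so that Usher's wall-crossing computation applies.
\item When $b_1>0$ you must insert a basis $\gamma_1,\dots,\gamma_{b_1}$ of $H_1/\mathrm{torsion}$. Without these insertions the wall-crossing term need not be $\pm1$, so your ``$A$ or $K-A$'' dichotomy fails. The insertions cost $b_1/2$ points, which is harmless.
\item $B_n$ may pair negatively with exceptional spheres of small area. Then Taubes' $\mathrm{SW}=\mathrm{Gr}$ does not apply directly, and you need McDuff's $\mathrm{Gr}'$ and Li--Liu. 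These still give curves of area $[\w]\cdot B_n$ through the points.
\end{itemize}

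\textbf{The lower bound.} The full packings you cite (rational forms on rational or ruled surfaces) do not cover all $b_2^+=1$ cases. The paper instead uses Buse--Hind--Opshtein packing stability, valid for every closed symplectic 4-manifold, together with bounded ECH error for a finite union of equal balls.

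Your fallback is a legitimate and more elementary alternative. Take generic compatible $J$ and generic points. The distinct simple components of any curve through the points have total class $A$ with $A\cdot A+c_1\cdot A\ge 2k$, by the index inequality and positivity of intersections. Write $A=a\,\mathrm{PD}[\w]+A^\perp$ and $c_1=\tau\,\mathrm{PD}[\w]+c^\perp$, and set $|x|^2=-x\cdot x$ on $[\w]^\perp$. Completing the square in the negative definite part gives
\[
A\cdot A+c_1\cdot A\le \frac{\w(A)^2+\kappa\,\w(A)}{2\vol(X,\w)}+\tfrac14|c^\perp|^2, \qquad \kappa=c_1\cdot[\w].
\]
Hence $\w(A)\ge\sqrt{4\vol(X,\w)k}-\kappa/2-o(1)$. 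You must write this light-cone estimate down, since it is exactly what turns $\inf\{\w(A): I(A)\ge k\}$ into a bound with $O(1)$ error. The paper uses this argument only when $c_1$ is proportional to $[\w]$.
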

After the proof of \Cref{thm:alt_fin}, we analyse further the error term 
\[e_k^\Alt(X,\w) = c_k^\Alt(X,\w) -\sqrt{4\vol(X,\w)k}\]
and conjecture a general expression for its asymptotic behaviour (see \Cref{conj_ek}). The analogous expression for the ECH capacities has been studied by Hutchings \cite{HutRuelle} in the case of star-shaped domains. Together with Edtmair's results on packing stability \cite{EdtPack}, \Cref{thm:alt_fin} also implies the following.
\begin{corollary}\label{cor:alt_fin}
	Let $(X,\w)$ be a compact symplectic 4-manifold with smooth boundary which symplectically embeds in a closed symplectic 4-manifold with $b_2^+=1$. Then the alternative ECH capacities of $(X,\w)$ are finite and satisfy the Weyl law \eqref{eq:Weyl}.
\end{corollary}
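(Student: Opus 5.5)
The plan is to deduce Corollary~\ref{cor:alt_fin} from \Cref{thm:alt_fin} using two ingredients: monotonicity of the alternative capacities under symplectic embeddings, and Edtmair's packing stability \cite{EdtPack}. Throughout, let $(X,\w)\hookrightarrow (M,\Omega)$ be the given embedding into a closed manifold $M$ with $b_2^+(M)=1$.

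\textbf{Finiteness and the upper bound.} By monotonicity \cite[Thm.\!\! 6]{HutAlt},
\[ c_k^\Alt(X,\w)\le c_k^\Alt(M,\Omega)=\sqrt{4\vol(M)k}+O(1). \]
This gives finiteness, but the constant in the Weyl law is $\vol(M)$, not $\vol(X)$. To get the sharp constant I would shrink the target. Fix $\ep>0$ and use packing stability to fill the complement $M\setminus X$ up to volume $\ep$ by balls (or ellipsoids). Let $B$ be the disjoint union of these balls, so that $X\sqcup B$ embeds in $M$ and $\vol(M)-\vol(X)-\vol(B)<\ep$.

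Next I need the disjoint union property of the alternative capacities,
\[ c_k^\Alt(X\sqcup B)=\max_{i+j=k}\bigl(c_i^\Alt(X)+c_j^\Alt(B)\bigr), \]
or at least the inequality $\ge$. I expect this follows from the max-min definition, since point constraints can be distributed between the components. I would also use the known Weyl law for balls, with $O(1)$ error. Choosing $j$ with $j/k \approx \vol(B)/\vol(M)$, which is the Lagrange optimal split, gives
\[ c_{k-j}^\Alt(X)\le \sqrt{4\vol(M)k}-\sqrt{4\vol(B)j}+O_\ep(1). \]
Reparametrizing by $k'=k-j$, this yields
\[ \limsup_{k'\to\infty} \frac{c_{k'}^\Alt(X)}{\sqrt{k'}}\le \sqrt{4(\vol(X)+\ep)}. \]

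\textbf{The lower bound.} For this I would pack $X$ itself by finitely many balls, which is possible for any compact domain with smooth boundary. Monotonicity together with the disjoint union formula then gives the $\liminf$ bound $\ge\sqrt{4(\vol(X)-\ep)}$. Letting $\ep\to0$ yields the leading-order Weyl law $c_k^\Alt(X)\sim\sqrt{4\vol(X)k}$.

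\textbf{The main obstacle.} Getting the $O(1)$ error rather than $o(\sqrt k)$ is the hard step. The constants above degrade as $\ep\to 0$, because infinitely many balls appear in the limit.

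For the upper bound, I would use Edtmair's packing stability in its sharp form. Rather than packing by balls, embed $X\sqcup E$ with $E$ a single (or a fixed finite union of) ellipsoid or rescaled domain that fills $M\setminus X$ exactly, or up to a full-filling. Edtmair shows full fillings occur after a fixed finite stage. The error terms of $M$, of $E$ (an ellipsoid, whose $e_k$ is bounded), and the optimization over the split $j$ then contribute only $O(1)$. The optimization in $j$ is a concave square-root problem whose discrete error is bounded.

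For the lower bound I would use the same full-filling idea applied to $X$ itself. Embed $X$ into $M$ and its complement into a second closed $b_2^+=1$ manifold, or compare with $M$ directly via
\[ c_k^\Alt(M)\le \max_{i+j=k}\bigl(c_i^\Alt(X)+c_j^\Alt(M\setminus X)\bigr). \]
This reverse subadditivity for a decomposition along a hypersurface would need a neck-stretching argument, and I would try first to prove it from Gromov--Taubes compactness. I am least sure this step works as stated.
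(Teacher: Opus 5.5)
\textbf{Upper bound.} This part is the paper's argument. The closure of $M\setminus X$ has smooth boundary, so Edtmair's packing stability \cite{EdtPack} gives $n$ equal balls with $n\vol(B^4(a))=\vol(M)-\vol(X)$ and $\coprod_{i=1}^n B^4(b)\hookrightarrow M\setminus X$ for all $b<a$. Disjoint union, monotonicity and conformality then give $c_k^\Alt(X)\le c_{k+\ell}^\Alt(M)-c_\ell^\Alt\bigl(\coprod_{i=1}^n B^4(a)\bigr)$ for every $\ell\ge 0$. The $O(1)$ Weyl laws for $M$ (\Cref{thm:alt_fin}) and for the balls, with $\ell$ near the optimal split, bound $e_k^\Alt(X)$ above. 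Once you have this, your preliminary $\ep$-packing argument is unnecessary.

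\textbf{Lower bound: the gap.} Your elaboration rests on the inequality $c_k^\Alt(M)\le\max_{i+j=k}\bigl(c_i^\Alt(X)+c_j^\Alt(M\setminus X)\bigr)$, and that step would fail. It is not something neck-stretching can deliver, for two reasons:
\begin{itemize}
\item $\partial X$ is an arbitrary smooth hypersurface, with no contact or stable Hamiltonian structure to stretch along.
\item The capacities of the two pieces are suprema over admissible subdomains, which say nothing about curves crossing $\partial X$.
\end{itemize}
Embedding the complement into a second closed manifold also gives no lower bound on $c_k^\Alt(X)$.

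\textbf{How to fix it.} None of this is needed, and $M$ plays no role in the lower bound. The ``degradation as $\ep\to0$'' you worry about is exactly what packing stability removes, for $X$ just as for its complement. Since $\partial X$ is smooth, there are $n$ equal balls with $n\vol(B^4(a))=\vol(X)$ and $\coprod_{i=1}^n B^4(b)\hookrightarrow X$ for all $b<a$. Monotonicity and conformality give
\[
c_k^\Alt(X)\ge c_k^\Alt\Bigl(\coprod_{i=1}^n B^4(a)\Bigr)=c_k^\ECH\Bigl(\coprod_{i=1}^n B^4(a)\Bigr).
\]
Because the volumes agree exactly, $e_k^\Alt(X)\ge e_k^\ECH\bigl(\coprod_{i=1}^n B^4(a)\bigr)$, and the right-hand side is bounded uniformly in $k$ \cite[\S10]{EdtPack}. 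Together with your upper bound, this is the paper's proof. Your sentence ``the same full-filling idea applied to $X$ itself'' already contained the whole lower-bound argument; the reverse-subadditivity detour should be dropped.
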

Note that the corresponding statement of \Cref{cor:alt_fin} for ECH capacities is false. For example, if $(X,\w)$ is any closed rational symplectic 4-manifold with 
\[b_2^+(X)=1 \qand c_1(TX,\w)\cdot [\w]\leq 0\] 
(e.g.\! see \Cref{ex:bad_uniruled}), then \Cref{thm:posinf} implies there are embedded Liouville subdomains whose ECH capacities are infinite.

Even when the ECH capacities are finite, it might not be known that the Weyl law \eqref{eq:Weyl} holds. For example, the ECH Weyl law is not known for a blown-up ball $(X,\w) = B^4(1)\# \overline{\C P^2}(\lambda)$; one has at best an approximate Weyl law:
\[ 1-\lambda^2 \leq \lim_{k\to\infty} \frac{c_k^{\ECH}(X,\w)^2}{2k} \leq  1.\]
By embedding $(X,\w)$ into $\C P^2(1)\# \overline{\C P^2}(\lambda)$, \Cref{cor:alt_fin} shows the left inequality is an equality for $c_k^\Alt(X,\w)$. 
    
And even when the leading term of the ECH Weyl law is known, the subleading asymptotics may not be. For example, in \cite[Prop.\!\! 4.10]{Bei}, it is shown that a cotangent disk bundle $(D^\ast N,\w_\can)$ of any closed non-orientable surface $N$ has finite ECH capacities which obey a Weyl law. But it is only known that the error term in the Weyl law is $O(1)$ for non-orientable surfaces with Euler characteristic congruent to 0 or 1  mod 4 \cite[Rmk.\!\! 4.12]{Bei}. As in \cite{DHL}, one can embed all cotangent disk bundles over non-orientable surfaces into an iterated blow-up of $\C P^2$ and hence the capacities $c_k^\Alt(D^\ast N,\w_\can)$ will satisfy the precise form of the Weyl law with error term in \eqref{eq:Weyl} for any non-orientable surface $N$.

\addtocontents{toc}{\protect\setcounter{tocdepth}{1}}

\subsection*{Outline} 
In \Cref{sec:defs}, we briefly recall the definitions and properties of the ECH capacities and their elementary alternatives. In \Cref{sec:proofs}, we prove the results stated in the introduction, with a few relevant diversions along the way.

\subsection*{Acknowledgements}
I'm grateful to my advisor Michael Hutchings for his insights and guidance. I'd also like to thank Rohil Prasad and Dan Cristofaro-Gardiner for helpful discussions which inspired and improved this work. I would like to acknowledge the support of the Natural Sciences and Engineering Research Council of Canada (NSERC) through PGS D-587425-2024. Je voudrais remercier le Conseil de recherches en sciences naturelles et en g\'{e}nie du Canada (CRSNG) de son soutien.  

\addtocontents{toc}{\protect\setcounter{tocdepth}{2}}

\section{A recollection on capacities}\label{sec:defs}
In this section, we remind the reader briefly of the definition of the ECH capacities and their elementary alternatives. For a more thorough introduction, see \cite{HutQ,Hutnotes} and \cite{HutAlt,Hut26} respectively.

\subsection{Some definitions}
We begin by recalling some standard definitions and terminology which will be needed to define the capacities and/or will appear in the proofs of our theorems.

\subsubsection{Symplectic-type structures}

\begin{definition}
	Let $Y$ be a closed oriented 3-manifold. Let $\w$ be a closed 2-form on $Y$ and $\lambda$ be a 1-form on $Y$.
	\begin{enumerate}
		\item We say $\w$ is a \emph{Hamiltonian structure} on $Y$ if $\ker(\w)$ is a one-dimensional smooth subdistribution of $TY$.
		\item The pair $(\lambda,\w)$ is a \emph{framed Hamiltonian structure} (FHS) if $\lambda\wedge \w>0$. 
		\item A framed Hamiltonian structure $(\lambda,\w)$ is called a \emph{stable Hamiltonian structure} (SHS) if $\ker(\w) \subset \ker(\dd\lambda)$. Equivalently, $\dd\lambda = f\w$ for some $f:Y\to\R$.
	\end{enumerate}
	Associated to an FHS $(\lambda,\w)$ is a canonical \emph{Hamiltonian vector field} $R$ uniquely specified by the conditions $i_R \w=0$ and $\lambda(R)=1$.
\end{definition}

\begin{example}
	The following are the core examples of FHSs.
	\begin{enumerate}
		\item A 1-form $\lambda$ is contact if and only if $(\lambda, \dd\lambda)$ defines an SHS. In this case, the Hamiltonian vector field is called the \emph{Reeb vector field} of $\lambda$.
		\item If $\phi:(\Sigma,\w)\to (\Sigma,\w)$ is an area-preserving surface diffeomorphism, then the mapping torus 
		\[M_\phi = [0,1]_t\times \Sigma/(0, \phi(x))\sim (1,x)\] 
		inherits an SHS $(\dd{t}, \w)$. The Hamiltonian vector field is $\pd_t$, which generates the suspension flow of $\phi$. Every FHS with a closed framing one-form can be described in this way.
		\item If $X$ is a non-singular volume-preserving vector field on an oriented 3-manifold $Y$ with volume form $\Omega$, then $i_X \Omega$ is a Hamiltonian structure. It is always possible to find some choice of framing. In fact, the Hamiltonian vector field of an FHS ($\lambda,\w$) always preserves the volume form $\lambda \wedge \w$, so really one can view FHSs as a symplectic formalism for the study of 3D volume-preserving flows.
	\end{enumerate}
\end{example}

Let $(X,\Omega)$ be a symplectic 4-manifold and $Y\subset X$ a closed hypersurface. The restriction $\w=\Omega|_Y$ is a Hamiltonian structure. It will admit a framing if and only the hypersurface $Y$ is co-oriented, in which case the space of framings is an affine space modelled on the space of positive functions on $Y$. Given a choice of framing $(\lambda, \w)$, a Moser argument for coisotropic submanifolds implies a sufficiently small tubular neighbourhood $i: U \hookrightarrow X$ of $Y$ can be identified with $(-\varepsilon,\varepsilon)_t\times Y$ so that $\{0\}\times Y= i^{-1}(Y)$ and so that
\[i^\ast \Omega = \w + \dd(t\lambda).\]
The nearby hypersurfaces $\{t\}\times Y$ inherit an FHS $(\lambda, \w+t\dd\lambda)$. The Hamiltonian vector fields on $\{t\}\times Y$ will agree for all $t\in (-\ep,\ep)$ if and only if $(\lambda,\w)$ is stable, whence the name. 

Conversely, given an FHS $(\lambda,\w)$, one can always form an associated symplectic thickening $((-\ep,\ep)_t\times Y, \w+\dd(t\lambda))$  for $\ep>0$ sufficiently small, which we call a \emph{symplectization}. In the contact case, one usually changes coordinates to define a global symplectization $(\R_t\times Y, \dd(e^t\lambda))$.

\begin{definition}
	Given a contact 3-manifold $(Y,\lambda)$, a \emph{Reeb orbit} $\gamma: \R/T\Z \to Y$ is a periodic orbit of the Reeb vector field modulo reparametrization of the domain. The orbit is called \emph{simple} if it is embedded. The Poincar\'e first return map along a Reeb orbit $\gamma$ defines a symplectic linear map $P_\gamma : \xi_{\gamma(0)}\to \xi_{\gamma(0)}$ on the contact distribution $\xi=\ker\lambda$. We say $\gamma$ is \emph{non-degenerate} if 1 is not an eigenvalue of $P_\gamma$. A non-degenerate orbit is either \emph{elliptic}, if $P_\gamma$ has complex eigenvalues, or \emph{positive/negative hyperbolic}, if it has real and positive/negative eigenvalues. We say $\lambda$ is \emph{non-degenerate} if all of its Reeb orbits are non-degenerate; a $C^\infty$-generic contact form is non-degenerate.
	
	For $(Y,\lambda)$ contact, an \emph{orbit set} is a finite set of pairs $\{(\gamma_i,m_i)\}$ where $\gamma_i$ are distinct simple Reeb orbits and $m_i$ are positive integer multiplicities. For $(Y,\lambda)$ non-degenerate, an orbit set is \emph{admissible} provided $m_i=1$ whenever $\gamma_i$ is hyperbolic. 
	
	The \emph{action} $\mathcal{A}(\gamma)$ of a Reeb orbit $\gamma$ is its period $\int_\gamma \lambda$. The action of an orbit set is
	\[\mathcal{A}(\{(\gamma_i,m_i)\}) = \sum_i m_i \int_{\gamma_i}\lambda.\] 
\end{definition}

\begin{definition}
	A compact symplectic manifold $(X,\w)$ with boundary $Y$ is called a \emph{weak Liouville domain} if $\w$ is exact and the boundary has a contact form $\lambda$ with $\dd\lambda=\w|_Y$. Additionally, $(X,\w)$ is a \emph{Liouville domain} if $\lambda$ extends to a global primitive for $\w$. A (weak) Liouville domain is \emph{non-degenerate} if its boundary is a non-degenerate contact manifold. 
	
	The \emph{Liouville vector field} of a Liouville domain $(X,\w=\dd\lambda)$ is the unique vector field $V$ such that $i_V \w = \lambda$; note this implies $\mathcal{L}_V\w=\w$, so that the symplectic form exponentially expands along the flow of $V$. The \emph{Liouville skeleton} $\Delta$ of a Liouville domain is the union of $\alpha$-limit sets of the flow of $V$. A Liouville domain $(X,\w)$ is \emph{Weinstein} if the Liouville vector field is gradient-like for a Morse function $\varphi: X\to \R$ which is maximized on $\pd X$.

Given a weak Liouville domain $(X,\w)$ with boundary $(Y,\lambda)$, we can form its \emph{symplectic completion} $(\widehat{X},\widehat{\w})$ by attaching a cylindrical symplectization end to its boundary:
\[(\widehat{X},\widehat{\w}) = (X,\w) \bigcup_{\pd X = \{0\}\times Y} ([0,\infty)_t \times Y, \dd(e^t\lambda)).\]
\end{definition}

\subsubsection{Almost-complex structures}

\begin{definition}
	Let $J$ be an almost-complex structure on a symplectic manifold $(X,\w)$.	\begin{enumerate}
		\item We say $J$ is \emph{$\w$-tame} if the bilinear form $\w(\cdot , J \cdot)$ is positive definite.
		\item We say $J$ is \emph{$\w$-compatible} if the bilinear form $\w(\cdot ,J\cdot)$ defines a Riemannian metric, i.e. it is both positive definite and symmetric.
	\end{enumerate}
\end{definition}

\begin{definition}
	An almost-complex structure $J$ on a symplectization $(-\ep,\ep)_t\times Y$ of an SHS $(\lambda,\w)$ is called \emph{$(\lambda,\w)$-compatible} provided the following hold:
	\begin{enumerate}
		\item $J$ is invariant under translation by $t$.
		\item $J(\pd_t)=R$, where $R$ is the Hamiltonian vector field.
		\item $J$ preserves $\ker(\lambda) \subset TY$ and satisfies $\w(v,Jv)\geq 0$ for $v \in \ker(\lambda)$.
	\end{enumerate}
	Such a $J$ is necessarily compatible with the symplectic form on the symplectization for $\ep$ sufficiently small and the space of $(\lambda,\w)$-compatible almost-complex structures is in bijection with the space of compatible almost-complex structures on $\ker(\lambda)$ (in particular, it is non-empty).
	
	In the case where our SHS is a contact pair $(\lambda, \dd\lambda)$ and we take the global symplectization $(\R\times Y, \dd(e^t\lambda))$, a $(\lambda, \dd\lambda)$-compatible almost-complex structure is called \emph{$\lambda$-compatible} and it is globally compatible with the symplectic form.
\end{definition}

\subsection{The ECH capacities}
Associated to a closed oriented contact 3-manifold $(Y,\xi)$, there is a Floer theory called \emph{embedded contact homology} (ECH) introduced by Hutchings \cite{Hutnotes}. Given a choice of non-degenerate contact form $\lambda$ for $\xi$, one defines a chain complex $ECC(Y,\lambda, J)$ generated over $\F_2$ by admissible orbit sets. For a generic choice of $\lambda$-compatible almost-complex structure $J$ on the symplectization $\R\times Y$, the differential on $ECC(Y,\lambda, J)$ is defined by counting certain embedded $J$-holomorphic curves of any genus in $\R\times Y$ asymptotic to orbit sets. The homology of the chain complex is denoted $ECH(Y,\lambda)$. In fact, ECH depends only on the underlying contact structure, and as such is also denoted $ECH(Y,\xi)$. And up to a grading shift, ECH is a purely topological invariant, coinciding with certain flavours of Seiberg--Witten and Heegaard Floer theories. We summarize a few key additional structures of ECH that are relevant to the construction of the ECH capacities.

\begin{enumerate}
	\item There is a grading by $H_1(Y;\Z)$:
	\[ECH(Y,\xi) = \bigoplus_{\Gamma \in H_1(Y)} ECH(Y,\xi,\Gamma),\]
	where $ECH(Y,\xi,\Gamma)$ is the homology of the subcomplex of $ECC(Y,\lambda,J)$ generated only by orbit sets whose total homology class is $\Gamma$.
	\item The group $ECH_\ast(Y,\xi,\Gamma)$ has a relative $\Z/d\Z$-grading where $d$ is the divisibility of 
	 \[c_1(\xi)+2\mathrm{PD}(\Gamma) \in H^2(Y;\Z)/\mathrm{torsion}.\]
	 This grading is given by the \emph{ECH index} $I(\alpha,\beta)$ of a pair of orbit sets $\alpha,\beta$. In particular, whenever $c_1(\xi)$ is torsion, $ECH_\ast(Y,\xi,0)$ has a canonical $\Z$-grading, where $I(\gamma)=I(\gamma,\emptyset)$. In addition, one can always reduce to a canonical $\Z/2\Z$-grading on $ECH_\ast(Y,\xi,\Gamma)$, which on the chain level counts the parity of the number of positive hyperbolic orbits in an orbit set.  
	\item There is a special element $c(Y,\xi)\in ECH_0(Y,\lambda,0)$ sensitive to the contact structure called the \emph{contact invariant}. It is the homology class $[\emptyset]$ of the empty set of Reeb orbits. This is always a cycle but it may be zero in homology, e.g. when $(Y,\xi)$ is overtwisted. It is non-vanishing, however, whenever $(Y,\xi)$ is strongly fillable \cite{HutTQFT}.
	\item If $Y$ is connected, there is a \emph{U map}
	\[U: ECH_\ast(Y,\lambda, \Gamma) \to ECH_{\ast-2}(Y,\lambda, \Gamma)\]
	which is induced from a chain map counting embedded $J$-holomorphic curves in $\R\times Y$ that pass through a point constraint.
	\item By Stokes' theorem, the ECH differential will always map orbit sets to orbit sets of lower action, hence, for $L\in\R$, we can make sense of
	\[ECH_\ast^{<L}(Y,\lambda,\Gamma),\]
	which is the homology of the subcomplex $ECC^{<L}_\ast(Y,\lambda,J,\Gamma)$ of $ECC_\ast(Y,\lambda,J,\Gamma)$ generated by all orbit sets $\gamma$ with $\mathcal{A}(\gamma) <L$. The $U$ map also respects this $\R$-filtration. While ECH is a topological invariant, the action filtration is highly sensitive to $\lambda$.
\end{enumerate}

\begin{definition}
The \emph{ECH spectrum} of a non-degenerate connected contact 3-manifold $(Y,\lambda)$ is the increasing sequence
\[0=c_0(Y,\lambda)<c_1(Y,\lambda)\leq c_2(Y,\lambda)\leq\cdots \leq c_k(Y,\lambda) \leq \cdots \leq +\infty\]
defined by 
\[c_k(Y,\lambda) = \inf\{L>0 : [\emptyset]\in \im\left(U^k: ECH^{<L}(Y,\lambda,0)\to ECH^{<L}(Y,\lambda,0)\right)\}.\]
We extend this definition to a degenerate contact form $\lambda$ as the limit of the ECH spectrum of any sequence of non-degenerate contact forms $C^\infty$-converging to $\lambda$ (and this will be independent of choice). 

If $(Y,\lambda)=(Y_1,\lambda_1)\sqcup\cdots \sqcup (Y_n,\lambda_n)$ is disconnected, we set
\[c_k(Y,\lambda)= \max_{k_1+\cdots+k_n=k} \sum_{i=1}^n c_{k_i}(Y_i,\lambda_i).\]
\end{definition}

\begin{definition}
	The \emph{ECH capacities} of a weak Liouville domain $(X,\w)$ with contact boundary $(Y,\lambda)$ is the sequence 
	\[0=c_0^\ECH(X,\w)<c_1^\ECH(X,\w)\leq c_2^\ECH(X,\w)\leq\cdots \leq c_k^\ECH(X,\w) \leq \cdots \leq +\infty\]
defined by 
\[c_k^\ECH(X,\w) = c_k(Y,\lambda).\]
Given an arbitrary symplectic 4-manifold $(X,\w)$, we define its $k$th ECH capacity to be the supremum of the $k$th ECH capacity of any union of weak Liouville domains which symplectically embeds into $(X,\w)$.
\end{definition}

\begin{proposition}[\cite{HutQ, Weyl}]
	The ECH capacities have the following properties.
	\begin{enumerate}[itemsep=0.3cm]
		\item (Monotonicity) Given a symplectic embedding $(X,\w) \hookrightarrow (X',\w')$,
		\[ c_k^{\mathrm{ECH}}(X,\w) \leq c_k^{\mathrm{ECH}}(X',\w')\quad \text{ for all $k$.} \]
		\item (Conformality) For any $s>0$,
		\[ c_k^{\mathrm{ECH}}(X,s\w) = s c_k^{\mathrm{ECH}}(X,\w) \quad \text{for all $k$.}\]
		\item(Spectrality) For a weak Liouville domain $(X,\w)$ with contact boundary $(Y,\lambda)$, if $c_k^\ECH(X,\w)<\infty$, then there is some orbit set $\gamma$ with $[\gamma]=0\in H_1(Y)$ and $\mathcal{A}(\gamma)=c_k^\ECH(X,\w)$. If $(X,\w)$ is non-degenerate, then this orbit set can be taken to be admissible.
		\item(ECH index) Suppose $(X,\w)$ is a weak Liouville domain with non-degenerate boundary so that $c_1(TX,\w)|_{\pd X}$ is torsion and $c_k^\ECH(X,\w)<\infty$. Then the orbit set $\gamma$ in the spectrality property can be taken to have ECH index $I(\gamma)=2k$.
		\item (Disjoint union)
		\[c_k^{\ECH}\left(\coprod_{i=1}^n (X_i,\w_i)\right) = \max_{k_1+\cdots +k_n=k} \sum_{i=1}^n c_{k_i}^\ECH(X_i,\w_i).\]
		\item (Weyl law) If $(X,\w)$ is a weak Liouville domain whose ECH capacities are all finite, then 
		\[\lim_{k\to \infty} \frac{c_k^{\mathrm{ECH}}(X,\w)^2}{4k} = \mathrm{vol}(X,\w):= \frac{1}{2}\int_X \w\wedge\w.\]
	\end{enumerate}
\end{proposition}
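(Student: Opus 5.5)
Most of these properties are formal consequences of the definitions together with the structure of filtered ECH listed above. The one exception is the Weyl law, which needs genuinely hard input from Seiberg--Witten theory. My plan is to prove them in increasing order of difficulty, citing \cite{HutQ} for the cobordism-map machinery and \cite{Weyl} for the asymptotics.

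\textbf{Conformality.} Rescaling $\lambda\mapsto s\lambda$ does not change the Reeb orbits (only their parametrizations) or the $\lambda$-compatible almost-complex structures, up to the obvious identification. Hence it gives an isomorphism of chain complexes commuting with $U$ and multiplying every action by $s$. So $c_k(Y,s\lambda)=s\,c_k(Y,\lambda)$. For a general $(X,\w)$ this follows by taking the supremum over embedded weak Liouville domains.

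\textbf{Spectrality and the ECH index property.} Assume first that $\lambda$ is non-degenerate. Only finitely many orbit sets have action below any fixed $L$. Therefore $ECH^{<L}$ changes only at actions of admissible nullhomologous orbit sets, and the infimum defining $c_k$ is attained at such an action.
\begin{itemize}
\item For the index statement, $[\emptyset]$ has grading $0$ and $U$ has degree $-2$. So a preimage of $[\emptyset]$ under $U^k$ can be taken homogeneous of grading $2k$; this grading is a $\Z$-grading because $c_1$ is torsion on the boundary.
\item A cycle representing this preimage with minimal maximal action has its top-action generator of ECH index $2k$ and action $c_k$.
\end{itemize}
In the degenerate case, I take non-degenerate $\lambda_n\to\lambda$ in $C^\infty$. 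The spectral values are realized by orbit sets of bounded action, and Arzel\`a--Ascoli lets me extract a limiting orbit set of $\lambda$ with the same action and homology class.

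\textbf{Disjoint union.} For weak Liouville domains this is built into the definition of $c_k$ for disconnected contact manifolds. It is justified by the fact that ECH of a disjoint union is the tensor product, with the $U$ map on each component acting on one factor. For general targets, I pass to the supremum over embedded unions.

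\textbf{Monotonicity.} This is the first non-formal step. A symplectic embedding of weak Liouville domains $X\hookrightarrow X'$ yields an exact (or weakly exact) cobordism $X'\setminus X$. Hutchings--Taubes cobordism maps, defined through Seiberg--Witten Floer cohomology, then give maps
\[ECH^{<L}(\pd X')\to ECH^{<L}(\pd X).\]
These respect the action filtration, commute with $U$ (up to the usual choice of component), and send the contact class to the contact class. The inequality $c_k(\pd X)\le c_k(\pd X')$ follows directly. For arbitrary manifolds the definition via suprema makes monotonicity tautological, since composites of embeddings are embeddings.

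\textbf{Weyl law (main obstacle).} This does not follow from the chain-level formalism. My plan follows \cite{Weyl}:
\begin{itemize}
\item Identify $ECH$ with Seiberg--Witten Floer cohomology via Taubes' isomorphism.
\item Relate the action of the ECH generator detected by $U^k$ to the spectral invariants of the perturbed Seiberg--Witten equations with a large perturbation parameter $r$.
\item Estimate the Chern--Simons--Dirac functional on Seiberg--Witten solutions in grading about $2k$ as $r\to\infty$. Here the Atiyah--Patodi--Singer eta-invariant estimates of the spectral flow are the key analytic input.
\end{itemize}
This gives $c_k^2/(2k)\to\vol(Y,\lambda)=\int_Y\lambda\wedge\dd\lambda$ for the contact boundary. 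Stokes' theorem converts this into the stated formula: $\int_Y\lambda\wedge\dd\lambda=\int_X\w\wedge\w=2\vol(X,\w)$, so $c_k^2/(4k)\to\vol(X,\w)$. This asymptotic estimate is where essentially all the difficulty lies, and I would import it from \cite{Weyl} rather than reprove it.
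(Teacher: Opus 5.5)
Your proposal is correct and follows essentially the paper's route. The paper gives no argument of its own and simply imports these properties from \cite{HutQ, Weyl}, and you have sketched the proofs from those references faithfully: scaling, finitely many generators below each action level, filtered cobordism maps for weakly exact cobordisms, and Taubes' isomorphism with the Chern--Simons and eta-invariant estimates.

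Two small points are worth tightening. First, your index-$2k$ argument really uses the reformulation $c_k=\min\{c_\sigma : U^k\sigma=[\emptyset]\}$, with $c_\sigma$ the spectral invariants of classes in unfiltered $ECH$. Under the literal filtered definition, the chain witnessing $U^kx\sim\emptyset$ must also lie below level $L$, and it could a priori carry the top action. Second, for a merely weak Liouville domain, your Stokes step $\int_Y\lambda\wedge\dd\lambda=\int_X\w\wedge\w$ needs the extra observation that $\int_Y(\mu|_Y-\lambda)\wedge\dd\lambda=0$ for a global primitive $\mu$ of $\w$. This holds because $\mu|_Y-\lambda$ is closed.
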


\subsection{The alternative ECH capacities}
We will say a symplectic 4-manifold is \emph{admissible} if it is a disjoint union of closed symplectic manifolds and non-degenerate weak Liouville domains. Given an admissible symplectic 4-manifold $(X,\w)$, we define $(\widehat{X},\widehat{\w})$ to be the symplectic manifold obtained by taking the symplectic completion of any Liouville components.
\begin{definition}
 
For $(X,\w)$ an admissible symplectic 4-manifold, an almost-complex structure $J$ on $(\widehat{X},\widehat{\w})$ is called \emph{admissible} if it is $\w$-compatible, and agrees with the restriction of a $\lambda$-compatible almost-complex structure on the cylindrical ends of any Liouville components with Liouville form $\lambda$. Given an admissible symplectic 4-manifold $(X,\w)$, we let $\mathcal{J}(X,\w)$ denote the collection of admissible almost-complex structures on $(\widehat{X},\widehat{\w})$.\end{definition}

\begin{definition}
	For $(X,\w)$ admissible and $J\in \mathcal{J}(X,\w)$, we consider pseudoholomorphic maps 
	\[u: (\Sigma, j) \to (\widehat{X},J)\]
	where $(\Sigma, j)$ is a closed (possibly disconnected) Riemann surface with finitely many punctures. We require that near each puncture of $\Sigma$, there is a closed neighbourhood on which $u$ asymptotically approaches the cylinder $[0,\infty)\times \gamma$ over some Reeb orbit $\gamma$ on the boundary of $X$.

	Given $(X,\w)$ admissible, $J\in \mathcal{J}(X,\w)$, and a collection of $k$ distinct points $x_1,\ldots, x_k \in X$, we define
	\[\mathcal{M}^J(X,\w; x_1,\ldots, x_k)\]
	to be the set of all $J$-holomorphic curves in $(\hat{X},\hat{\w})$ of the form prescribed above which have the points $x_1,\ldots, x_k$ in their image and which are non-constant on all components of their domain, modulo biholomorphic reparametrization of their domain.
\end{definition}

\begin{definition}
	Given $u \in \mathcal{M}^J(X,\w)$, its \emph{energy} is defined to be
	\[\mathcal{E}(u) = \int_\Sigma u^\ast\w.\]
	In the case where $(X,\w)$ is closed, this is the homological pairing of $[\w]$ and $u_\ast[\Sigma]$. In the case where $(X,\w)$ is a Liouville domain, Stokes' theorem implies this is equal to the sum of the actions of the Reeb orbits which $u$ is positively asymptotic to.
\end{definition}

\begin{definition}
	The \emph{alternative ECH capacities} of an admissible symplectic 4-manifold $(X,\w)$ is the sequence
	\[0 = c_0^\Alt(X,\w) < c_1^\Alt(X,\w) \leq  c_2^\Alt(X,\w) \leq \cdots \leq c_k^\Alt(X,\w)\leq \cdots \leq +\infty\]
	defined by the max-min problems
	\[c_k^\Alt(X,\w) = \sup_{\substack{J\in \mathcal{J}(X,\w)\\ x_1,\ldots , x_k \in X \text{ distinct}}} \inf_{u\in \mathcal{M}^J(X,\w;x_1,\ldots, x_k)} \mathcal{E}(u).\]
	For an arbitrary symplectic 4-manifold $(X,\w)$, we define its $k$th alternative ECH capacity to be the supremum of the $k$th alternative capacity of any admissible symplectic 4-manifold which symplectically embeds into $(X,\w)$. 
\end{definition}

\begin{proposition}[\cite{HutAlt}]\label{prop:altproperties}
	The alternative ECH capacities have the following properties.
	\begin{enumerate}[itemsep=0.3cm]
		\item (Monotonicity) Given a symplectic embedding $(X,\w)\hookrightarrow (X',\w')$,
		\[c_k^\Alt(X,\w) \leq c_k^\Alt(X',\w') \quad \text{for all $k$.}\]
		\item (Conformality) For any $s>0$,
		\[c_k^\Alt(X,s\w) = sc_k^\Alt(X,\w)\quad \text{for all $k$.} \]
	 \item(Spectrality) If $(X,\w)$ is a Liouville domain with contact boundary $(Y,\lambda)$, and if $c_k^\Alt(X,\w)<\infty$, then there exists some orbit set $\gamma$ on $Y$, with $[\gamma]=0\in H_1(X)$ and $\mathcal{A}(\gamma)=c_k^\Alt(X,\w)$. 
	 \item(ECH index) Suppose $(X,\w)$ is a Liouville domain with non-degenerate boundary so that $c_1(TX,\w)|_{\pd X}$ is torsion and $c_k^\Alt(X,\w)<\infty$. Then the orbit set $\gamma$ in the spectrality property can be taken to have ECH index $I(\gamma)\geq 2k$.
		\item (Disjoint union)
		\[c_k^{\Alt}\left(\coprod_{i=1}^n (X_i,\w_i)\right) = \max_{k_1+\cdots +k_n=k} \sum_{i=1}^n c_{k_i}^\Alt(X_i,\w_i).\]
		\item (Sublinearity) 
		\[c_{k+\ell}^\Alt(X,\w) \leq c_k^\Alt(X,\w)+c_\ell^\Alt(X,\w) \quad \text{for all $k,\ell\geq0$.}\]
		\item (ECH bound) If $(X,\w)$ is a Liouville domain,
		\[c_k^\Alt(X,\w) \leq c_k^\ECH(X,\w) \quad \text{for all $k$.}\]
		\item (Seiberg--Witten bound) Let  $(X,\w)$ be a closed symplectic 4-manifold with $b_2^+=1$ and $A\in H_2(X)$. If the Seiberg--Witten invariant $SW_+(X,\mathfrak{s}_\w +A)$ for the positive chamber as determined by $[\w]$ is non-zero, then
		\[c_k^\Alt(X,\w) \leq \langle [\w],A\rangle.\]
	\end{enumerate}
\end{proposition}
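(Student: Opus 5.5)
The plan is to verify each item of \Cref{prop:altproperties} directly from the max-min definition, grouping them by the tool each one needs. \emph{Conformality} is formal: scaling $\w$ by $s$ leaves the spaces of admissible $J$ and of curves unchanged (after rescaling the Liouville form and the cylindrical coordinate) and multiplies every energy by $s$. \emph{Disjoint union} I would prove by splitting the $k$ point constraints into $k_i$ points on the $i$th component. A curve through all points is a union of curves on the components, so for fixed $J$ and points its minimal energy is $\sum_i$ of the componentwise minima. Taking the supremum over point configurations then gives the $\max_{k_1+\cdots+k_n=k}\sum_i$ formula. \emph{Sublinearity} uses the fact that the constraint sets need not be separated: for $J$ and points $x_1,\dots,x_{k+\ell}$, the union of a near-minimizer through $x_1,\dots,x_k$ and one through $x_{k+1},\dots,x_{k+\ell}$ lies in $\mathcal{M}^J(X,\w;x_1,\ldots,x_{k+\ell})$. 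Its energy is the sum, so taking infima and then suprema gives the inequality.

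\emph{Monotonicity} for arbitrary targets is built into the definition via the supremum over embedded admissible manifolds, provided the capacity of an admissible $(X,\w)$ agrees with this supremum. This reduces to showing that if $(X,\w)\hookrightarrow(X',\w')$ are both admissible, then $c_k^\Alt(X)\le c_k^\Alt(X')$. For this I would use the standard neck-stretching argument. Fix $J\in\mathcal{J}(X,\w)$ and points in $X$, and take $J'$ on $\widehat{X'}$ that agrees with $J$ on $X$ and is cylindrical on a neck $[-R,R]\times\pd X$. Any $J'$-curve through the points with energy $\le E$ then converges as $R\to\infty$, by SFT/Gromov--Taubes compactness, to a building whose top level lies in $\widehat X$ and passes through the points. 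Since energy is nonincreasing across levels, this top level has energy at most $E$. Comparing the infima gives the inequality.

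\emph{Spectrality} follows because, for a Liouville domain, the energy of a curve equals the total action of its positive asymptotic orbit set, and the action spectrum is closed. Taking limits of near-optimal curves, again by compactness, realizes the supremum-infimum as an action of a nullhomologous orbit set. The \emph{ECH index} property needs Hutchings' index inequality. For generic $J$, the curves through $k$ generic points have Fredholm index at least $2k$, and after passing to the embedded/somewhere-injective part and using $I\ge\ind$ together with additivity, the positive orbit set has $I(\gamma)\ge 2k$. The \emph{ECH bound} comes from the ECH $U$-map. If $U^k\sigma=[\emptyset]$ with $\sigma$ of action $<L$, then the cobordism map from $\widehat X$ combined with the point-constrained $U$-map yields, for every admissible $J$ and points, a (broken) holomorphic current through the points with energy $<L$. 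The \emph{Seiberg--Witten bound} is analogous: by Taubes' $SW=Gr$, a nonzero $SW_+(X,\mathfrak s_\w+A)$ produces, for every $J$ and generic points, a $J$-curve in class $A$ through the points, whose energy is $\langle[\w],A\rangle$. A limiting argument then handles nongeneric points and $J$, and the $b_2^+=1$ chamber condition makes the count well defined.

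I expect the main obstacles to be the ECH bound and the Seiberg--Witten bound. Both require the deep input that nonvanishing Floer-theoretic counts force curves to exist for \emph{every} admissible $J$, not only generic ones, and that the curves produced have the right energy and the required point constraints. I would first prove these for generic $J$ and generic points, then extend to all $J$ and points using Gromov compactness with energy bounds, accepting limits that are possibly disconnected or multiply covered, since $\mathcal{M}^J$ allows these.
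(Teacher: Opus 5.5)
The paper does not prove this proposition; it quotes it from Hutchings. Your arguments for items (1)--(7) are essentially his:
\begin{itemize}
\item formal manipulations for conformality, disjoint union and sublinearity;
\item neck-stretching with Gromov--Taubes compactness for currents for monotonicity (unlike SFT compactness, this needs no genus bound);
\item closedness of the set of actions of null-homologous orbit sets for spectrality;
\item $I\ge\ind$ for somewhere-injective curves, together with superadditivity of $I$ over distinct simple components, for the ECH index property.
\end{itemize}

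\textbf{The gap is item (8).} As literally stated it is false, and your argument does not prove it. Take $A=0$. Taubes' theorem gives $SW_+(X,\mathfrak{s}_\w)=\pm1\neq 0$, so (8) would force $c_k^\Alt(X,\w)\le 0$ for every $k$, contradicting $c_1^\Alt(X,\w)>0$.

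The step that fails is your claim that a nonzero invariant produces, for every $J$ and generic points, a $J$-curve in class $A$ through the points. The integer $SW_+(X,\mathfrak{s}_\w+\mathrm{PD}(A))$ corresponds to a Gromov--Taubes count with exactly $\frac12 I(A)$ point constraints, where $I(A)=A\cdot A+\langle c_1(TX,\w),A\rangle$. It therefore says nothing about curves through $k>\frac12 I(A)$ points. For $A=0$ there is no nonconstant closed curve in class $0$ at all, since such curves have positive energy.

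You need the hypothesis $2k\le I(A)$, or $2k\le I(A)-2d$ if $2d$ classes from $H_1(X)$ are inserted. This is how the bound appears in \cite[Lem.~18]{HutAlt} and in \Cref{lem:GrAlt}. With it, for generic $J$ you get curves through $\frac12 I(A)\ge k$ generic points, hence through any $k$ of them, and your limiting argument then handles arbitrary $J$ and points.

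Even with that hypothesis, Taubes' $SW=\mathrm{Gr}$ alone is not enough when $b_2^+=1$. The identification $SW_+(X,\mathfrak{s}_\w+\mathrm{PD}(A))=\mathrm{Gr}(X,\w;A)$ requires $A\cdot E\ge -1$ for every symplectic $(-1)$-sphere $E$. Otherwise you must use Li--Liu's identification of $SW_+$ with McDuff's modified invariant $\mathrm{Gr}'$. Its nonvanishing still yields curves in class $A$ through the points, possibly containing multiply covered exceptional spheres, which $\mathcal{M}^J$ allows.

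Smaller points:
\begin{itemize}
\item In the monotonicity argument, the level of the limit building lying in $\widehat X$ is the bottom level (only positive ends), not the top.
\item For the ECH bound, your sketch should say how the point constraints end up in $X$ rather than in $\R\times\pd X$. The input needed is that, via Seiberg--Witten theory, $\Phi_X\circ U^k$ can be computed with the $k$ base points anywhere in the connected cobordism $X$. You also need the holomorphic curves axiom for that map and $\Phi_X[\emptyset]\neq 0$.
\item Spectrality for a Liouville domain with degenerate boundary needs an extra approximation step. Use nondegenerate shrinkings, continuity from monotonicity and conformality, and Arzel\`a--Ascoli on the orbit sets.
\end{itemize}
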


\section{Proofs of the main theorems}\label{sec:proofs}

\subsection{(Singular) Donaldson divisors and infinite ECH capacities}

In this section we prove \Cref{thm:closed_inf,thm:posinf}. The key input will be the existence of Donaldson divisors and a Biran decomposition for integral symplectic manifolds,  and Opshtein's extension to singular divisors in non-integral symplectic manifolds.

We begin with a closed rational symplectic 4-manifold $(X,\w)$. Because the ECH capacities are monotone under rescaling, we may rescale our rational symplectic form so that $[\w]$ is in the integral cohomology lattice. In this situation, a famous result of Donaldson \cite{Don} guarantees that for every sufficiently large integer $k$, there exists a closed embedded symplectic submanifold $\Sigma_k \subset X$ so that the homology class $[\Sigma_k] \in H_2(X;\Z)$ is Poincar\'{e} dual to $k[\w]$. Such a submanifold is what we call a \emph{Donaldson divisor}. It is sometimes also called a symplectic hyperplane section, because, in analogy with the Kodaira embedding theorem, the submanifolds are produced as the zero set of nearly-pseudoholomorphic sections of complex line bundles $L^{\otimes k}$, where $c_1(L)=[\w]$. 

As observed by Biran \cite{Bir} and further explicated by Giroux \cite{Gir}, Donaldson divisors give rise to distinguished decompositions of the ambient symplectic manifold. By this we mean that we may decompose $(X,\w)$ as a union:
	\begin{equation}\label{eq:Biran} (X,\w) = (M,\dd\lambda) \bigcup_{\pd M=-\pd\nu(\Sigma_k)} (\nu(\Sigma_k), \w_\std).
	\end{equation}
	Here $(M,\dd\lambda)$ is a Liouville domain\footnote{In fact, as Giroux \cite{Gir} shows, $(M,\dd{\lambda})$ can be taken to be Weinstein for $k$ large enough, but this will not matter for us.} and $(\nu(\Sigma_k),\w_\std)$ is a symplectic disk bundle over $\Sigma_k$, and the two domains are glued along their common contact-type boundary. The \emph{Biran decomposition} \eqref{eq:Biran} is symplectic in the sense that $\w$ agrees with $\dd\lambda$ and $\w_\std$ when restricted to $M$ and $\nu(\Sigma_k)$ respectively. Note it is possible that the Donaldson divisor is disconnected (although for $k$ large enough one can always find a connected Donaldson divisor \cite{Gir}). We will assume for the moment that our Donaldson divisor is connected, and address the disconnected case in \Cref{lem:Dondisc}. When the divisor is connected, we can see that the common boundary $Y=\pd M=-\pd\nu(\Sigma_k)$ is an $S^1$-bundle over $\Sigma_k$ of Euler number $-[\Sigma_k]^2$. It will be equipped with its prequantization contact structure, which we define momentarily.
	
	First note that we may change the relative size of the two pieces of the Biran decomposition by translating $Y$ along its Liouville flow. On the one hand, we may shrink $M$ onto its Liouville skeleton $\Delta$; the complement $X\setminus \Delta$ is a maximal open symplectic disk bundle over $\Sigma_k$ with fibre of area $ k^{-1}$ (this was Biran's original interest towards applications in symplectic embeddings). On the other hand, we may shrink the disk bundle onto the base $\Sigma_k$. Then $X\setminus \Sigma_k$ will be an open Liouville manifold, with an ideal boundary $Y$ giving a natural compactification to a Liouville domain which we call $(M_{\max},\dd\lambda)$. 

Let $(\Sigma, \sigma)$ be a closed oriented surface with $\sigma$ an area form so that $[\sigma]\in H^2(\Sigma)$ lives in the integral cohomology lattice. Let $p: Y\to \Sigma$ be the $S^1$-bundle over $\Sigma$ with Euler class $e = -  [\sigma]$. By Chern--Weil theory, one can find an $S^1$-connection one-form $\lambda_\pre $ on $Y$ with curvature $\dd\lambda_\pre  = \sigma$. One concludes $(Y,\lambda_\pre)$ is a contact manifold and we call $\xi_\pre = \ker \lambda_\pre$ the \emph{prequantization contact structure}. The contact form $\lambda_\pre$ is normalized so that it evaluates to one on the vector field generating the $S^1$-action. Thus the Reeb flow of $(Y,\lambda_\pre)$ is totally periodic in the fibres, with period one.

Returning to the above context, the Liouville domain $(M_{\max},\dd\lambda)$ will have contact boundary $(Y,\lambda)$, where $Y$ is an $S^1$-bundle over $\Sigma_k$. Because $(Y,\lambda)$ bounds a symplectic disk bundle, $\dd\lambda|_Y$ must agree with the pullback of an area form from $\Sigma_k$. Hence $\ker(\dd\lambda|_Y) = \ker(\dd{p} :TY\to T\Sigma)$ and the Reeb flow of $\lambda$, which is tangent to $\ker(\dd\lambda)$, must be along the fibres of the circle bundle. It follows that $(Y,\lambda)$ is the prequantization structure and $\lambda$ is some constant rescaling of $\lambda_\pre$. Note $Y$ is the $S^1$-bundle over $\Sigma_k$ with Euler number 
\[e(Y) = -[\Sigma_k]^2= -\langle k[\w] , [\Sigma_k]\rangle.\] 
Since $\dd\lambda =\w|_Y$, in order that the $\dd\lambda_\pre$ coincides with the pullback to $Y$ of a representative for the Euler class, the Liouville one-form $\lambda$ on $M_{\max}$ must be $\frac{1}{k} \lambda_{\pre}$ (see also \cite[Prop.\!\! 5]{Gir}).

\begin{example}Let us give this Biran decomposition explicitly in a few cases.
	\begin{enumerate}
		\item Consider $(\C P^2 ,\w_{FS})$, with Fubini--Study form scaled so that $\w_{FS}([\C P^1])=1$. Then a hyperplane $\C P^1 \subset \C P^2$ is a Donaldson divisor Poincar\'e dual to $[\w_{FS}]$. And $\C P^2$ decomposes into a 4-ball and a symplectic disk bundle over $\C P^1=S^2$ of Euler class $[\C P^1]^2 = 1$. The two are glued along the standard tight contact $S^3$, which is the prequantization circle bundle over $S^2$ of Euler number $-1$. 
		\item Consider $(S^2\times S^2,\w)$, where $\w$ is the product symplectic form so that each $S^2$ factor has area one. The diagonal $\Delta_{S^2}$ is a Donaldson divisor Poincar\'e dual to $[\w]$. We can decompose $S^2\times S^2$ as a cotangent disk bundle $D^\ast S^2$, given as a tubular neighbourhood of the anti-diagonal Lagrangian $S^2$, and the symplectic disk bundle over $S^2$, given as a tubular neighbourhood of $\Delta_{S^2}$. The two are glued along the prequantization circle bundle over $S^2$ of Euler number $-2 = -[\Delta_{S^2}]^2$.  
		\item Return to $(\C P^2,\w_{FS})$ and let $\phi: \C P^1 \to \C P^2$ be an embedded elliptic curve. Then $\Sigma =\im(\phi)$ is a genus one surface representing $3[\C P^1]$ in homology, and hence also a Donaldson divisor. The ample divisor complement $\C P^2 \setminus \Sigma$ is an affine variety, and as such has a natural Stein structure. Its ideal boundary is the prequantization circle bundle over $T^2$ with Euler number $ -9= -[\Sigma]^2$.
	\end{enumerate}
\end{example}

\begin{lemma}\label{lem:preq}
	Let $(Y,\lambda_{\pre})$ be the prequantization contact 3-manifold on a circle bundle with Euler number $-e$ over an oriented surface of genus $g$.
	\begin{enumerate}
		\item The first element of the ECH spectrum satisfies $c_1(Y,\lambda_{\pre}) \geq e$.
		\item If $e\leq 2g-2$, then $c_1(Y,\lambda_{\pre})=\infty$.
	\end{enumerate}
\end{lemma}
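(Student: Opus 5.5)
Both parts will follow from the standard Morse--Bott computation of ECH for prequantization bundles, with the circle-fibre orbits as the only Reeb orbits. Since $\lambda_\pre$ is degenerate, I will perturb it: pick a perfect Morse function $f$ on $\Sigma$ and set $\lambda_\varepsilon=(1+\varepsilon p^\ast f)\lambda_\pre$. For any action bound $L$ and $\varepsilon$ small, the orbits of action $<L$ are the iterates of the $2g+2$ fibres over critical points of $f$: one elliptic fibre over the minimum, $2g$ hyperbolic fibres over the index-one points, and one elliptic fibre over the maximum. The $m$-fold covers have action $m+O(\varepsilon)$. By the definition of the ECH spectrum for degenerate forms, $c_1(Y,\lambda_\pre)$ is the limit of $c_1(Y,\lambda_\varepsilon)$, so it suffices to work with these orbit sets up to action $L$.

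\textbf{Part (1).} The homology class of an orbit set of total multiplicity $m$ is $m$ times the fibre class $F$. From the Gysin sequence, $H_1(Y)\supset \Z/e$ is generated by $F$ (for $e\neq 0$), so $F$ has order $|e|$; if $e=0$, $F$ has infinite order. By the spectrality property, $c_1$ is the action of a nonempty orbit set $\gamma$ with $[\gamma]=0$. Its total multiplicity $m$ is therefore a positive multiple of $|e|$ (and no such set exists if $e=0$, giving $c_1=\infty$). Since the action is $m+O(\varepsilon)$, we get $c_1\geq |e|\geq e$ in the limit. The same count handles the case $e\le 0$.

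\textbf{Part (2).} I will use the known computation of ECH of prequantization bundles (Nelson--Weiler, or Farris). In the class $\Gamma=0$, the generators are the orbit sets of multiplicity $m\in |e|\Z$. Their ECH index is computed from the relative first Chern class and the Conley--Zehnder terms. A generator of multiplicity $m=n|e|$ built from $a$ copies of the max fibre, hyperbolic fibres and $b$ copies of the min fibre has index roughly
\[
I \approx (2-2g)m + e\,m^2\pm\text{(local terms)},
\]
after normalising signs so that the leading quadratic term is $-e m^2$ for Euler number $-e$, where the factor $\chi(\Sigma)m=(2-2g)m$ comes from $c_1(\xi)=\chi(\Sigma)$ times the pullback class. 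The quantity governing whether $U$ can hit $[\emptyset]$ is the sign of $c_1(\xi_\pre)+2\mathrm{PD}(\Gamma)$ on the relevant classes. When $e\le 2g-2$, the index of $\emptyset$ is not reachable: either $[\emptyset]=0$ in ECH, which holds because the contact structure is not fillable in some cases, or the empty set lies in a grading from which no element of positive grading maps to it under $U^k$. Concretely, I expect to show that for $e\le 2g-2$ every nonempty admissible orbit set with $[\gamma]=0$ has ECH index $\le 0$, relative to $\emptyset$. If so, then no class of grading $2\geq 2$ exists above $[\emptyset]$, hence $[\emptyset]\notin \im U$ and $c_1=\infty$.

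Alternatively, I will cite Nelson--Weiler's identification $ECH(Y,\xi_\pre,0)$ with a direct sum of copies of $\Lambda^\ast H^1(\Sigma)$ in gradings determined by $\chi(\Sigma)$ and $e$, and read off that $U$ acts so that $[\emptyset]$ is not in its image exactly when $e\le 2g-2$. This is in the spirit of Bei's earlier computation, which matches \cite{Bei}, where infinitude was proven for cotangent bundles ($e=2g-2$).

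The main obstacle is the careful index bookkeeping in part (2), namely getting the sign of the quadratic term and the Conley--Zehnder contributions right so that the threshold comes out exactly at $e=2g-2$. I would first check the boundary case $e=2g-2$ against the unit cotangent bundle of a genus-$g$ surface to calibrate signs.
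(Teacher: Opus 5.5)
Your part (1) is essentially the paper's argument: spectrality, the fibre generating the $\Z/e\Z$ summand of $H_1(Y)$, and every simple Reeb orbit being a fibre of period one. The perturbation to $\lambda_\varepsilon$ is harmless. Your cases $e\le 0$ are vacuous, because a prequantization form exists only when the Euler number $-e$ is negative.

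Part (2), however, has a genuine gap. The key claim, that for $e\le 2g-2$ every nonempty null-homologous orbit set has ECH index $\le 0$ relative to $\emptyset$, is false. Since $e>0$, the index of an orbit set of total fibre multiplicity $de$ grows quadratically in $d$. Computing with the disk-bundle filling and the fibrewise-constant trivialization, one finds $I=d(2-2g)+ed^2+m_+-m_-$, where $m_\pm$ are the multiplicities of the two elliptic fibres. For example, when $g=2$, $e=1$, $d=4$, the indices already range over $[4,12]$. In fact $ECH_*(Y,\xi_{\mathrm{pre}},0)$ is nonzero in arbitrarily high degree, as for any torsion class, so no grading argument can prevent a class from mapping to $[\emptyset]$ under $U$.

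Your other alternatives also fail:
\begin{itemize}
\item The claim that $[\emptyset]=0$ by non-fillability is wrong: $\xi_{\mathrm{pre}}$ is strongly fillable (by the disk bundle of Euler number $-e$), so $[\emptyset]\neq 0$. Moreover, with the definition of $c_k$ used here, $[\emptyset]=0$ would make $c_1$ finite, not infinite.
\item A group-level computation such as Nelson--Weiler's does not determine the $U$-module structure. So you cannot ``read off'' whether $U$ hits $[\emptyset]$; that needs the actual $U$ map, which is a much harder computation.
\end{itemize}
In short, the threshold $e=2g-2$ does not come out of index bookkeeping at all.

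The missing idea is geometric, and it is what the paper uses via \cite[Cor.~4.7]{Bei}.
\begin{enumerate}
\item When $e\le 2g-2$, Milnor--Wood theory gives a foliation of $Y$ transverse to the circle fibres, i.e.\ transverse to the Reeb flow of $\lambda_{\mathrm{pre}}$.
\item Such a foliation is hypertaut. A modification of the Eliashberg--Thurston construction then produces a Liouville domain $I\times Y$ with $(Y,\lambda_{\mathrm{pre}})$ as one of its two boundary components.
\item The main result \cite[Thm.~1.2]{Bei}, that Liouville domains with disconnected boundary have infinite ECH capacities, then gives $c_1(Y,\lambda_{\mathrm{pre}})=\infty$.
\end{enumerate}
So the threshold in (2) is exactly the Milnor--Wood bound. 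The reason $U$ cannot reach $[\emptyset]$ is the existence of this disconnected-boundary Liouville domain, not a grading obstruction.
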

\begin{proof}
	From the spectrality property, elements of the ECH spectrum are always equal to the action of some null-homologous orbit sets of the Reeb flow. From the Gysin sequence, $H_1(Y) = \Z^{2g} \oplus \Z/e\Z$, where the circular fibre generates the $\Z/e\Z$ summand, and the rest of the homology comes from the base.
	
	In the case of $(Y,\lambda_{\pre})$, every simple orbit is just some circular fibre. Hence, for an orbit set to be null-homologous, it must have total multiplicity at least $e$. Since each simple orbit has period one, a null-homologous orbit set has action at least $e$. It follows that $c_1(Y,\lambda_{\pre}) \geq e$.
	
	The fact $c_1(Y,\lambda_{\pre})=\infty$ when $e\leq 2g-2$ was proven in \cite[Cor.\!\! 4.7]{Bei}. The basic idea is that when $e\leq 2g-2$, the Milnor--Wood inequality implies $(Y,\lambda_\pre)$ admits a foliation transverse to its Reeb flow. This foliation is necessarily hypertaut, and as such, a modification of a construction of Eliashberg--Thurston \cite{ET} gives rise to a Liouville domain of the form $I\times Y$ with $(Y,\lambda_\pre)$ as one boundary component. The main result of \cite[Thm.\!\ 1.2]{Bei} then implies the ECH spectrum must be infinite\end{proof}
\begin{remark}\label{rmk:preq}
	Based on some comparisons with Heegaard--Floer homology and some computations we were informed of in private communication with Michael Hutchings, we expect $c_k(Y,\lambda_\pre)<\infty$ for all $k$  whenever $e>2g-2$. This is known for all prequantization bundles over the sphere and torus thanks to Chen \cite{Che}, and further recent work of Chen \cite{CheU} implies the remaining cases.
\end{remark}

\begin{lemma}\label{lem:Dondisc}
	Suppose that $(X,\w)$ is a closed integral symplectic 4-manifold which admits a disconnected Donaldson divisor. Then its ECH capacities are infinite.
\end{lemma}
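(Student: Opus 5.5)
The plan is to use the Biran decomposition \eqref{eq:Biran} attached to the disconnected divisor, and to observe that the Liouville piece then has disconnected contact boundary. This is exactly the situation handled by the main result of \cite[Thm.\!\ 1.2]{Bei}, the same input used in the proof of \Cref{lem:preq}(2). Roughly, that result says a contact form appearing as one boundary component of a Liouville domain with disconnected boundary has infinite ECH spectrum. Monotonicity then transfers this to $(X,\w)$.

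Concretely, write the Donaldson divisor as $\Sigma_k=\Sigma^1\sqcup\cdots\sqcup\Sigma^m$ with $m\geq 2$. Take the Biran decomposition $(X,\w)=(M,\dd\lambda)\cup\nu(\Sigma_k)$ as in \eqref{eq:Biran}, where $\nu(\Sigma_k)$ is a disjoint union of symplectic disk bundles $\nu(\Sigma^i)$. The common boundary is then
\[Y=Y_1\sqcup\cdots\sqcup Y_m,\]
where $Y_i$ is the prequantization circle bundle over $\Sigma^i$. Each $Y_i$ is nonempty since each $\Sigma^i$ has positive symplectic area.

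The first point to check, and the one I expect to need the most care, is that $M$ is connected. Otherwise $M$ could be a disjoint union of Liouville domains, each with connected boundary, and nothing would be gained. Connectedness follows because $\Sigma_k$ has real codimension two in the connected manifold $X$, so $X\setminus\Sigma_k$ is connected. Moreover $M$ is a deformation retract of $X\setminus \Sigma_k$: flow along the Liouville vector field, i.e.\ shrink the disk bundle onto $\Sigma_k$ as in the passage to $M_{\max}$. Hence $M$ is a single connected Liouville domain whose boundary has $m\geq2$ components.

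Now apply \cite[Thm.\!\ 1.2]{Bei} to $M$, viewing $Y_1$ as one boundary component and $Y_2\sqcup\cdots\sqcup Y_m$ as the rest. This gives $c_j(Y_1,\lambda|_{Y_1})=\infty$ for all $j\geq1$; if the theorem is phrased for exact cobordisms with nonempty negative end, I would first reinterpret $M$ in that form. By the definition of the ECH spectrum of a disconnected contact manifold, taking $k_1=k$ and $k_i=0$ for $i\geq 2$,
\[c_k^\ECH(M,\dd\lambda)=c_k(Y,\lambda)\geq c_k(Y_1,\lambda|_{Y_1})+\sum_{i\geq2}c_0(Y_i,\lambda|_{Y_i})=\infty\]
for every $k\geq1$. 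Since $(M,\dd\lambda)$ symplectically embeds in $(X,\w)$, monotonicity (or directly the definition of the capacities of $X$ as a supremum over embedded Liouville domains) gives $c_k^\ECH(X,\w)=\infty$ for all $k\geq 1$.
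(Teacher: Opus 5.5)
Your proposal is correct and follows essentially the same route as the paper: the Biran decomposition \eqref{eq:Biran} for the disconnected divisor gives an embedded Liouville domain $(M,\dd\lambda)$ with disconnected boundary, \cite[Thm.\ 1.2]{Bei} makes its ECH capacities infinite, and monotonicity transfers this to $(X,\w)$. Your extra check that $M$ is connected (since $X\setminus\Sigma_k$ is connected and retracts onto $M$) is left implicit in the paper, and it is exactly what makes the appeal to \cite{Bei} legitimate.
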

\begin{proof}
	According to the Biran decomposition \eqref{eq:Biran}, $(X,\w)$ will admit an embedded Liouville domain $(M,\dd\lambda)$ with disconnected boundary. Then \cite[Thm.\!\ 1.2]{Bei} will imply the ECH capacities of $(M,\dd\lambda)$ are infinite. Hence, by monotonicity, the ECH capacities of $(X,\w)$ are infinite too.
\end{proof}
\begin{remark}
	One can find disconnected Donaldson divisors, for example, in the standard symplectic $T^4$ \cite[Prop.\!\! 9]{Gir}. However, a version of the light cone lemma implies disconnected Donaldson divisors cannot exist in a symplectic 4-manifold with $b_2^+=1$.
\end{remark}
	
\begin{proposition}\label{prop:closed_rat_inf}
	Let $(X,\w)$ be a closed rational symplectic 4-manifold. Then its ECH capacities are all infinite.
\end{proposition}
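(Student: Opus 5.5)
The plan is to use the Biran decomposition \eqref{eq:Biran} for Donaldson divisors of higher and higher degree, and to bound $c_1^\ECH$ of the Liouville piece from below using \Cref{lem:preq}(1). The key point is that this lower bound grows linearly in the degree $k$. Since $c_k^\ECH\geq c_1^\ECH$ for all $k\geq 1$, it suffices to show $c_1^\ECH(X,\w)=\infty$.

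First, by conformality of the ECH capacities, rescale $\w$ so that $[\w]$ is integral. By Donaldson's theorem, for every sufficiently large integer $k$ there is a Donaldson divisor $\Sigma_k$ Poincar\'e dual to $k[\w]$. If for some $k$ this divisor is disconnected, \Cref{lem:Dondisc} already gives infinite capacities. So assume $\Sigma_k$ is connected; alternatively, use Giroux's result that connected divisors exist for all large $k$.

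The Biran decomposition then yields the Liouville domain $(M_{\max},\dd\lambda)$, compactifying $X\setminus\Sigma_k$. Its boundary $Y$ is the circle bundle over $\Sigma_k$ with Euler number $-e$, where
\[e=[\Sigma_k]^2=k^2\langle[\w]\cup[\w],[X]\rangle=2k^2\vol(X,\w),\]
and the boundary contact form is $\lambda=\frac1k\lambda_\pre$. By \Cref{lem:preq}(1) and conformality of the ECH spectrum in the contact form,
\[c_1^\ECH(M_{\max},\dd\lambda)=\tfrac1k\,c_1(Y,\lambda_\pre)\geq \tfrac{e}{k}=2k\vol(X,\w).\]

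It remains to transfer this bound to $X$. The domain $M_{\max}$ is not itself embedded, since its boundary is the ideal boundary at $\Sigma_k$. However, flowing $Y$ backward along the Liouville vector field for time $s>0$ gives genuinely embedded Liouville domains $M_s\subset X\setminus\Sigma_k$. Each $M_s$ is exact symplectomorphic to $M_{\max}$ with the form scaled by $e^{-s}$, with boundary contact form $\frac{e^{-s}}{k}\lambda_\pre$. Hence by monotonicity and conformality,
\[c_1^\ECH(X,\w)\geq c_1^\ECH(M_s)\geq e^{-s}\,2k\vol(X,\w).\]
Letting $s\to0$ and then $k\to\infty$ gives $c_1^\ECH(X,\w)=\infty$, and therefore $c_k^\ECH(X,\w)=\infty$ for every $k\geq 1$.

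The main point requiring care is this last transfer step. One must check that the boundary of $M_s$ is exactly a constant multiple of $\lambda_\pre$, so that \Cref{lem:preq} applies directly even though $\lambda_\pre$ is degenerate; the ECH spectrum of a degenerate form is defined by limits, and the null-homology action argument in the proof of \Cref{lem:preq} passes to that limit. One must also use that the capacities of the closed manifold are defined as a supremum over embedded Liouville domains, which is what allows the bound on $M_s$ to pass to $X$. By contrast, the growth of the bound itself is just the elementary computation $e/k=k[\w]^2$.
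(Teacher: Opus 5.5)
Your proposal is correct and follows essentially the same route as the paper. Both arguments rescale to an integral form, dispose of disconnected divisors via \Cref{lem:Dondisc}, bound $c_1^\ECH(M_{\max},\dd\lambda)=\frac{1}{k}c_1(Y,\lambda_\pre)\geq 2k\vol(X,\w)$ using \Cref{lem:preq}, and transfer the bound to $X$ by shrinking $M_{\max}$ slightly along the Liouville flow (the paper's ``rescaling by a constant smaller than, but arbitrarily close to, one'') before letting $k\to\infty$.
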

\begin{proof}
	By rescaling, suppose $(X,\w)$ is integral. Let $k$ be a sufficiently large positive integer so that we may find a Donaldson divisor $\Sigma_k$ Poincar\'e dual to $k[\w]$. By \Cref{lem:Dondisc}, it suffices to consider the case where $\Sigma_k$ is connected for all $k$. This gives rise to a Biran decomposition as in \eqref{eq:Biran}. We can take the Liouville piece to be a rescaling of the maximal Liouville compactification $(M_{\max},\dd\lambda)$ of $X\setminus \Sigma_k$ by a constant smaller than, but arbitrarily close to, one. By conformality and monotonicity of the ECH capacities, it follows that
	\begin{equation}\label{eq:Biran_bound}
		c_1^\ECH(M_{\max},\dd\lambda) \leq c_1^\ECH(X,\w) .
	\end{equation}
	As discussed above, the boundary of $(M_{\max}, \dd\lambda)$ is the prequantization contact manifold $(Y_{\pre}, \frac{1}{k} \lambda_{\pre})$. Here, $Y_{\pre}$ is the $S^1$-bundle over $\Sigma_k$ with Euler number
	\begin{equation}\label{eq:Euler}
		e = -[\Sigma_k]^2 = -\langle k [\w]\cup k[\w], [X]\rangle = -2k^2 \vol(X,\w).
	\end{equation}
	Now by definition of the ECH capacities and their conformality properties, we see
	\begin{align*}
		c_1^\ECH(M_{\max},\dd\lambda) &=  \frac{1}{k} c_1(Y,\lambda_{\pre}).
		\shortintertext{In light of \Cref{lem:preq} and \eqref{eq:Euler},}
		&\geq 2k \vol(X,\w).
	\end{align*}
	Combined with \eqref{eq:Biran_bound}, we have $c_1^\ECH(X,\w) \geq 2k \vol(X,\w)$. But $k$ is allowed to be arbitrarily large, giving the desired result. 
\end{proof}	
	
	Now we consider the case where $(X,\w)$ is not rational. For this we will use ``singular Donaldson divisors," as introduced by Opshtein \cite{Ops}. By barycentric approximation, we can decompose 
	\[\w = \sum_{i=1}^\ell \lambda_i \w_i, \quad \lambda_i\in\R_{>0},\]
	where $\w_i$ are closed 2-forms with $[\w_i]\in H^2(X;\Q)$ and so that all $\w_i$ are within some $\ep$-ball of $\w$ in $H^2(X;\R)$ for any given $\ep>0$. We can also assume the $[\w_i]$ are linearly independent. When $\ep$ is sufficiently small, the $\w_i$ will all be symplectic forms.
	
	 We can then find positive integers $k_1,\ldots, k_\ell$ so that there is an $\w_i$-Donaldson divisor $\Sigma_i$ with $\mathrm{PD}[\Sigma_i] = k_i[\w_i]$ for each $i=1,\ldots,\ell$. After decreasing $\ep$, we can assume each $\Sigma_i$ is $\w$-symplectic as well. Moreover, following Opshtein \cite[Thm.\!\ 1.2]{Ops}, we can ensure the divisors intersect each other positively and transversely and are symplectically orthogonal with respect to $\w$. Also, there is some $N>0$ so that the $k_i$ can be chosen arbitrarily as long as they are all at least $N$, since one can find Donaldson divisors of any sufficiently large degree. By increasing $N$, we may also assume each $\Sigma_i$ is connected \cite{Gir}. Note we have
	\begin{equation}\label{eq:singpolar}
		[\w] = \sum_{i=1}^\ell a_i \mathrm{PD}[\Sigma_i], \quad a_i\in \R_{>0}. 
	\end{equation}
	Each $a_i = \lambda_i k_i^{-1}$. We will also assume for now that the union of the divisors $\Sigma_i$ has one connected component.

Given such a setup, let $U_i$ denote a small tubular neighbourhood of $\Sigma$ which can be chosen to be symplectomorphic to a symplectic disk bundle, as in the Biran decomposition. Let $\mathcal{U} = \bigcup_{i=1}^\ell U_i$. Topologically, $\mathcal{U}$ is a plumbing of disk bundles. The boundary of $\mathcal{U}$ is a graph manifold; it consists of $\ell$ many circle bundles of respective Euler classes $[\Sigma_i]^2$ over copies of $\Sigma_i$ (with added boundary components) glued along the boundary tori according to the intersection graph of the surfaces $\Sigma_i$. Symplectically, in the terminology of \cite{LM}, \eqref{eq:singpolar} implies the singular divisor satisfies the ``positive GS criterion" and hence $\mathcal{U}$ is a concave symplectic filling of its boundary.

Without any modification, the boundary of $\mathcal{U}$ is not smooth at the tori where the circle bundles meet, but can be made smooth by an arbitrarily small deformation of the neighbourhood. We will describe a semi-explicit such modification momentarily. More detailed versions of this construction appear for various kinds of singular symplectic divisors in many corners of the literature (e.g. \cite{GaS,McL,GP,ECHplumb}); since we will not require a toric structure\footnote{Such a structure will only exist under strong topological assumptions on the singular divisor \cite{ECHplumb}.} or non-degenerate/Morse--Bott Reeb dynamics on the boundary, precise formulae are not necessary.  Following \cite[Lem.\!\! 4.1]{Ops}, the divisor complement $X \setminus \bigcup_{i=1}^\ell \Sigma_i$ carries a complete Liouville vector field, and hence $(X\setminus \mathcal{U}, \w)$, after smoothing the boundary, is a Liouville domain.  
	
	Let $Y$ denote the (smoothed) boundary of $\mathcal{U}$. In the complement of a neighbourhood of the singularities, $Y$ is the union of boundaries of symplectic disk bundles. Following our discussion for the Biran decomposition, the characteristic foliation on $Y$ away from the singularities is periodic in the fibres of each circle bundle. For a sufficiently small radius of circle bundle, the flow with respect to the induced contact form has period arbitrarily close to $a_i$ on the fibres over $\Sigma_i$. 
	
	We now describe how to smooth $Y$ near the singularities and understand the resulting Reeb dynamics. By a Darboux--Moser--Weinstein construction as given in \cite[Prop.\!\! 3.1]{Ops}, the neighbourhood $(\mathcal{U},\w)$ can be chosen as a union of symplectic disk bundles with the following additional structure. Using symplectic orthogonality, for any intersection point $p_{ij} \in \Sigma_i \cap \Sigma_j$, there is a pair of small 2-disks $D_i, D_j$ with polar coordinates $(r_i,\theta_i)$ and $(r_j, \theta_j)$ so that a neighbourhood of $p_{ij}$ is identified with a bi-disk $D_i\times D_j$ centered on $p_{ij}$ and the symplectic fibrations over $\Sigma_i$ and $\Sigma_j$ are locally given by the projections onto $D_i$ and $D_j$ respectively. Finally, under this identification, the symplectic form $\w$ is given by
	\begin{equation}\label{eq:bidisk}
		\w|_{D_i\times D_j} = a_i \dd(r_i^2)\wedge \dd\theta_i +a_j\dd(r_j^2) \wedge \dd\theta_j.
	\end{equation}
	 In particular, the local model near an intersection point $p_{ij} \in \Sigma_i \cap \Sigma_j$ depends only on $a_i$ and $a_j$. To describe a smoothing of $Y$ near a singular torus determined by  $p_{ij}$, we can consider the simplest case of symplectic divisors $S^2 \times \{\ast\}$ and $\{\ast\}\times S^2$ in $S^2 \times S^2$ with the symplectic form $a_i \w_1 + a_j \w_2$, where $\w_1, \w_2$ are the pullbacks of normalized area forms from the two $S^2$ factors. On the complement of a codimension-one locus away from the divisors, this is symplectomorphic to the bi-disk \eqref{eq:bidisk} in an obvious way. With this symplectic form, the complement of a small symplectic disk neighbourhood of these divisors is the closed polydisk
	\[P(a_i-\ep,a_j-\ep)  = \{(z_1, z_2)\in \C^2  : \pi |z_1|^2 \leq a_i-\ep , \pi |z_2|^2 \leq a_j-\ep\} \]
	for $\ep>0$ arbitrarily small. The polydisk's boundary is singular precisely along the torus where 
	\[\pi|z_1|^2=a_i-\ep \qand  \pi|z_2|^2 = a_j-\ep.\]
	Under our preceding local symplectomorphism \eqref{eq:bidisk}, a neighbourhood of $Y$ around the singular torus corresponding to $p_{ij}$ is identified with a neighbourhood of the polydisk's boundary around its singular torus (for an appropriate $\ep$). 
	
	Let $\Omega$ denote a closed convex domain in the first quadrant of $\R^2$ which is very near to the rectangle $R=[0,a_i-\ep]\times [0,a_j-\ep]$, except that the domain is smoothed near $(a_i-\ep,a_j-\ep)$ to look like the undergraph of a concave decreasing function. See \Cref{fig:polydisk_smoothing}. We can consider the associated convex toric domain
	\[X_\Omega = \{(z_1, z_2)\in \C^2  : (\pi|z_1|^2, \pi |z_2|^2) \in \Omega\}. \]
	This will be a small deformation of the polydisk $P(a_i-\ep,a_j-\ep)$ with smooth boundary.	
	
\begin{figure}[h]
\begin{tikzpicture}
    \draw[->, thick] (-0.5, 0) -- (5.5, 0) node[above] {\small $\pi|z_1|^2$};
    \draw[->, thick] (0, -0.5) -- (0, 4.5) node[right] {\small $\pi|z_2|^2$};
    \filldraw[fill=green!15, draw=teal!80!black, thick] 
        (0,0) -- (4,0) -- (4,3) -- (0,3) -- cycle;
    \node at (2, 1.5) {\Large $R$};
    \node[below left] at (0,0) {\small $0$};
    \node[below] at (4,0) {\small $a_i-\ep$};
    \node[left] at (0,3) {\small $a_j-\ep$};
\end{tikzpicture}
\begin{tikzpicture}
    \draw[->, thick] (-0.5, 0) -- (5.5, 0) node[above] {\small $\pi|z_1|^2$};
    \draw[->, thick] (0, -0.5) -- (0, 4.5) node[right] {\small $\pi|z_2|^2$};
    \filldraw[fill=cyan!15, draw=blue!80!black, thick] 
        (0,0) -- (4,0) -- (4,2.7) arc (0:90:0.3) -- (0,3) -- cycle;
    \node at (2, 1.5) {\Large $\Omega$};
    \node[below left] at (0,0) {\small $0$};
    \node[below] at (4,0) {\small $a_i-\ep$};
    \node[left] at (0,3) {\small $a_j-\ep$};
\end{tikzpicture}
\caption{Toric base diagrams of the polydisk $P(a_i-\ep,a_j-\ep)$ on the left and its smoothing $X_\Omega$ on the right.}
\label{fig:polydisk_smoothing}
\end{figure}
	
The Reeb dynamics of the boundary of a toric domain are well known; see e.g. \cite{CCGFHR, Hutnotes}. First, the Reeb flow will be tangent to the fibres of the toric projection
\[\Pi: \pd X_\Omega \subset \C^2 \to \pd\Omega \subset \R^2 , \quad \Pi(z_1,z_2) = (\pi |z_1|^2, \pi |z_2|^2). \]
Along $\Pi^{-1}$ of the horizontal edge, the Reeb flow will be periodic with period $a_j-\ep$. Along $\Pi^{-1}$ of the vertical edge, the Reeb flow will be periodic with period $a_i-\ep$. At a point $p$ on the curved part of the boundary of $\Omega$ with a tangent line of slope $s$, $\Pi^{-1}(p)$ defines an invariant torus. If $s$ is irrational, this is an irrational rotation of the torus. If $s$ is rational and can be written as $s= - \frac{v_1}{v_2}$ for $v_1,v_2\in \N$ relatively prime, then the torus is foliated by a circle of closed Reeb orbits each of period slightly smaller than $v_1 (a_i-\ep) + v_2 (a_j-\ep)$. 
	
	\begin{figure}[h]
	\includegraphics{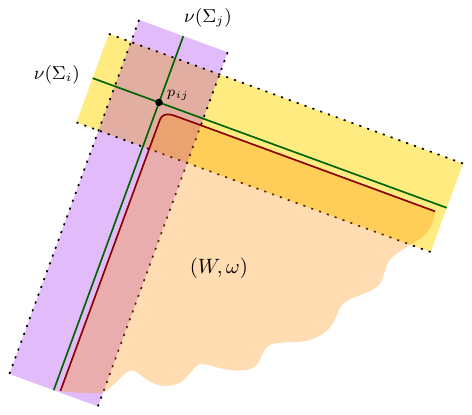}	
	\caption{Two irreducible components $\Sigma_i,\Sigma_j$ of a singular divisor intersect at $p_{ij}$. They admit symplectic disk neighbourhoods $\nu(\Sigma_i),\nu(\Sigma_j)$. In the bi-disk $\nu(\Sigma_i)\cap \nu(\Sigma_j)$, we can use the previous toric model to smooth the complement of smaller symplectic neighbourhoods $\mathcal{U} = \bigcup U_i$ to a Liouville domain $(W,\w)$.}
	\label{fig:divcomp}		
\end{figure}

We can now deform the divisor complement $(X\setminus \mathcal{U}, \w)$ by replacing a neighbourhood of the singular intersection (identified with a neighbourhood of the corner of a polydisk) with a neighbourhood of the domain $X_{\Omega}$ near the smoothed corner, as in \Cref{fig:divcomp}. As before, this determines an embedded Liouville domain inside $(X,\w)$. We also know explicitly the Reeb flow on its boundary $Y$. To describe the Reeb orbits fully, it is helpful to first determine the homology of $Y$.

	\begin{lemma}\label{lem:graph_homology}
		Let $\Gamma$ be a connected graph of order $\ell$, with an ordering of vertices, whose $i$th vertex is labelled by a pair of non-negative integers $(g_i, e_i)$ for $i=1,\ldots, \ell$. Let $b_1(\Gamma)$ denote its first Betti number, $g_{\mathrm{tot}}$ denote the sum of the labels $g_i$, and $Q_\Gamma$ the $\ell\times \ell$ matrix which is the sum of the adjacency matrix of $\Gamma$ and the diagonal matrix $\mathrm{diag}(e_1,\ldots, e_\ell)$. 
		
		Let $X_\Gamma$ be the 4-manifold obtained by plumbing disk bundles of Euler number $e_i$ over surfaces of genus $g_i$ for $i=1,\ldots, \ell$ according to the edges of the graph $\Gamma$. Let $Y_\Gamma$ be the boundary of $X_\Gamma$. Then
		\[H_1(Y_\Gamma;\Z) \cong \Z^{2g_\mathrm{tot}+b_1(\Gamma)} \oplus \coker(Q_\Gamma : \Z^\ell \to \Z^\ell). \]
	\end{lemma}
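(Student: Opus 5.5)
We compute $H_1(Y_\Gamma)$ from the long exact sequence of the pair $(X_\Gamma, Y_\Gamma)$ together with Lefschetz duality. The plumbing $X_\Gamma$ deformation retracts onto the union $S=\bigcup_i \Sigma_{g_i}$ of the zero sections, glued at one point for each edge of $\Gamma$. A Mayer--Vietoris computation, or simply viewing $S$ up to homotopy as a wedge of the surfaces with a graph, gives
\[H_1(X_\Gamma)\cong \Z^{2g_{\mathrm{tot}}+b_1(\Gamma)}, \qquad H_2(X_\Gamma)\cong \Z^\ell,\]
with the second group generated by the classes $[\Sigma_i]$, and $H_3(X_\Gamma)=0$. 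Lefschetz duality and the universal coefficient theorem then give
\[H_2(X_\Gamma,Y_\Gamma)\cong H^2(X_\Gamma)\cong \Hom(H_2(X_\Gamma),\Z)\oplus \mathrm{Ext}(H_1(X_\Gamma),\Z)=\Z^\ell,\]
since $H_1(X_\Gamma)$ is free. Similarly $H_1(X_\Gamma,Y_\Gamma)\cong H^3(X_\Gamma)=0$.

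The relevant portion of the long exact sequence of the pair is therefore
\[H_2(X_\Gamma)\xrightarrow{\ j\ } H_2(X_\Gamma,Y_\Gamma)\to H_1(Y_\Gamma)\to H_1(X_\Gamma)\to 0.\]
Under the duality identification, $j$ sends a class $A$ to the functional $B\mapsto A\cdot B$. So in the bases $[\Sigma_i]$ and their duals, $j$ is the intersection matrix. This matrix has diagonal entries $[\Sigma_i]^2=e_i$ and off-diagonal entries $\Sigma_i\cdot\Sigma_j$, which equal $1$ for each edge joining $i$ and $j$ (each plumbing contributes one positive transverse intersection point) and $0$ otherwise. Hence $j=Q_\Gamma$, assuming $\Gamma$ is a simple graph. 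With multi-edges the adjacency matrix would count edges, and the formula still holds.

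This yields a short exact sequence
\[0\to \coker(Q_\Gamma)\to H_1(Y_\Gamma)\to \Z^{2g_{\mathrm{tot}}+b_1(\Gamma)}\to 0.\]
It splits because the quotient is free, which gives the claimed isomorphism. The main point needing care is identifying $j$ with the intersection form, including the sign conventions; both signs give the same cokernel up to isomorphism, so the orientation issues are harmless. The other check is that $H_1(X_\Gamma,Y_\Gamma)=0$, so that $H_1(Y_\Gamma)\to H_1(X_\Gamma)$ is surjective. This follows from $H^3(X_\Gamma)=0$, because $X_\Gamma$ is homotopy equivalent to a 2-complex.
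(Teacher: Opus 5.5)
Your proposal is correct and follows essentially the same route as the paper: the long exact sequence of the pair $(X_\Gamma,Y_\Gamma)$ combined with Lefschetz duality, the identification of $H_2(X_\Gamma)\to H_2(X_\Gamma,Y_\Gamma)$ with the intersection matrix $Q_\Gamma$, and splitting of the resulting short exact sequence because $H_1(X_\Gamma)$ is free. Your remarks on multi-edges and on sign conventions not affecting the cokernel go slightly beyond what the paper writes, and they are correct.
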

	\begin{proof}
	By contracting the disk fibres, $X_\Gamma$ deformation retracts to a collection of surfaces $\Sigma_1,\ldots, \Sigma_\ell$ of genera $g_1,\ldots, g_\ell$ joined by one-handles according to the edges of the graph $\Gamma$. It then follows that
	\[H_1(X_\Gamma) \cong \Z^{2g_\mathrm{tot}} \oplus \Z^{b_1(\Gamma)},\quad H_2(X_\Gamma) \cong \Z^\ell, \qand H_3(X_\Gamma)=H_4(X_\Gamma)=0.\]
	From Poincar\'e--Lefschetz duality and the universal coefficient theorem, we find
	\[H_2(X_\Gamma, \pd X_\Gamma) \cong \Z^\ell ,\quad  H_3(X_\Gamma, \pd X_\Gamma)\cong \Z^{2g_\mathrm{tot}+b_1(\Gamma)},\qand H_1(X_\Gamma,\pd X_\Gamma) =0. \]
	The pair $(X_\Gamma, \pd X_\Gamma= Y_\Gamma)$, yields the long exact sequence fragment
	\[ H_2(X_\Gamma) \xrightarrow{i_\ast} H_2(X_\Gamma,Y_\Gamma)\to H_1(Y_\Gamma) \to H_1(X_\Gamma)\to 0.\]
	We claim that, with the correct identifications of the domain/codomain with $\Z^\ell$, the first map $i_\ast$ is given by $Q_\Gamma$. The lemma then follows because the resulting short exact sequence 
	\[0\to \coker(Q_\Gamma) \to H_1(Y_\Gamma) \to \Z^{2g_\mathrm{tot}+b_1(\Gamma)}\to 0\]
	ends in a free module and so must split.
	
	To see our claim, we can compose with isomorphisms to yield a map
	\[H_2(X_\Gamma) \xrightarrow{i_\ast} H_2(X_\Gamma, \pd X_\Gamma) \xrightarrow{\cong} H^2(X_\Gamma) \xrightarrow{\cong} \Hom(H_2(X_\Gamma), \Z),\]
	where the second map is Poincar\'e--Lefschetz duality and the third is the universal coefficient theorem. Let $\Sigma_i,\Sigma_j \subset X_\Gamma$ be a pair of surfaces forming the base of two disk bundles in the plumbing (suitably perturbed if $i=j$). We compute
	\begin{align*}
		\langle \mathrm{PD}(i_\ast [\Sigma_i]),[\Sigma_j]\rangle &= \big( [X_\Gamma,\pd X_\Gamma]\cap \mathrm{PD}(i_\ast [\Sigma_i]) \big) \cap \mathrm{PD}[\Sigma_j]\\
		&= [X_\Gamma,\pd X_\Gamma] \cap \big(\mathrm{PD}(i_\ast [\Sigma_i]) \cup \mathrm{PD}[\Sigma_j]\big).
		\shortintertext{From intersection theory,}
		&= \#(\Sigma_i \cap \Sigma_j).
	\end{align*}
	Evidently this intersection count is the Euler number of the bundle over $\Sigma_i$ if $i=j$ and is the number of plumbings between $\Sigma_i$ and $\Sigma_j$ if $i\neq j$. Equivalently, this is the $(i,j)$th entry of $Q_\Gamma$. Because the surfaces $\Sigma_i$ for $i=1,\ldots, \ell$ generate $H_2(X_\Gamma)$, the claim follows.
	\end{proof}
	We may apply \Cref{lem:graph_homology} to the boundary $Y$ of our Liouville domain $X\setminus \mathcal{U}$. Let $\Gamma$ be the labelled plumbing graph of our singular divisor configuration. Consider the class of each irreducible component $[\Sigma_i]\in H_2(\mathcal{U}) \cong \Z^\ell$ and let $\theta_i$ denote its image in $\coker(Q_\Gamma) \subset H_1(Y_\Gamma)$. Our preceding analysis gives the following enumeration of Reeb orbits on $Y=Y_\Gamma$. Compare a similar description of the Reeb dynamics of the boundary of ample divisor complements given in all dimensions by Ganatra--Siegel \cite[Thm.\!\! 2.7]{GS} (the two descriptions would agree after we perturb our Reeb flow to be non-degenerate).
	\begin{proposition}\label{prop:div_comp_Reeb}
		Fix $\delta>0$. There exists a tubular neighbourhood $\mathcal{U}$ of the singular divisor $\bigcup_{i=1}^\ell \Sigma_i$ whose complement is a Liouville domain $(X\setminus \mathcal{U}, \w)$ with contact boundary $(Y_\Gamma, \lambda_\delta)$ so that every simple closed Reeb orbit $\gamma$ of $(Y_\Gamma, \lambda_\delta)$ has the following form. There is a pair $1\leq i < j \leq \ell$ and a pair of non-negative integers $v_1, v_2$, not both zero and relatively prime if both positive, so that $\gamma$ represents the homology class
		\[[\gamma] = v_1 \theta_i + v_2 \theta_j \in H_1(Y_\Gamma) \]
		and has period within $\delta$ of $v_1 a_i + v_2 a_j$.\qed
	\end{proposition}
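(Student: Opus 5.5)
The plan is to assemble the Reeb dynamics of $(Y_\Gamma,\lambda_\delta)$ from two kinds of local pieces. The first kind is the \emph{circle-bundle region}: the part of $Y_\Gamma$ lying over $\Sigma_i$ minus small disks around the points $p_{ij}$. The second kind is the \emph{corner region}: the smoothed neighbourhood of each singular torus, modelled on $X_\Omega$. I would then compute periods by Stokes' theorem using an explicit Liouville primitive, rather than by reading them off the toric picture. In each piece the homology class of every simple orbit comes from \Cref{lem:graph_homology}. A fibre circle over $\Sigma_i$ is the boundary of a normal disk that meets $\Sigma_i$ once. So in the sequence $H_2(X_\Gamma)\to H_2(X_\Gamma,Y_\Gamma)\to H_1(Y_\Gamma)$ it is the image of the relative class dual to $[\Sigma_i]$, that is, $\theta_i$. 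A $(v_1,v_2)$ torus knot in the corner torus $\{r_i=\rho_i,\,r_j=\rho_j\}\subset D_i\times D_j$ bounds a chain in $D_i\times D_j$ that meets $\Sigma_i=\{r_i=0\}$ with multiplicity $v_1$ and $\Sigma_j$ with multiplicity $v_2$. Hence its class is $v_1\theta_i+v_2\theta_j$.

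For the dynamics I would proceed in three steps. In the circle-bundle region, the boundary of the disk bundle of radius $\rho$ has characteristic foliation given by the fibres, exactly as in the Biran discussion. So the simple orbits there are the fibres, with $(v_1,v_2)=(1,0)$. In the corner region I use the model \eqref{eq:bidisk}. There $\partial X_\Omega$ is invariant under the $T^2$-action, so every orbit lies on a torus $\Pi^{-1}(p)$. Rational slope $-v_1/v_2$ gives a circle family of $(v_1,v_2)$-orbits, and irrational slope gives no closed orbits. The flat edges reproduce the fibres over $\Sigma_i$ and $\Sigma_j$. This yields the claimed list of simple orbits, with $i<j$ after relabelling.

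For the periods, I take the Liouville form of Opshtein's construction \cite[Lem.\!\! 4.1]{Ops}. In the bidisk it has the shape $\lambda = a_i(r_i^2-1)\,\dd\theta_i + a_j(r_j^2-1)\,\dd\theta_j + \beta$, where $\beta$ is closed. The residue $-a_i$ around $\Sigma_i$ is forced by \eqref{eq:singpolar}. The integral of $\beta$ over loops in the torus vanishes, because those loops bound in the complement. Integrating $\lambda$ over a $(v_1,v_2)$ orbit (with orientation fixed by the Reeb direction) then gives period
\[
v_1a_i(1-\rho_i^2)+v_2a_j(1-\rho_j^2).
\]
In particular a fibre over $\Sigma_i$ has period $a_i(1-\rho_i^2)$, which is within $\delta$ of $a_i$ once the radii satisfy $\rho^2<\delta/\max_i a_i$.

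The main obstacle is the uniformity in $(v_1,v_2)$. The formula shows that the error is $v_1a_i\rho_i^2+v_2a_j\rho_j^2$, and this grows linearly along the family of rational slopes on a fixed smoothing. So shrinking $\mathcal{U}$ alone controls any finite set of $(v_1,v_2)$ but not all of them at once. The first thing I would try is to choose the smoothing curve $\partial\Omega$ in a $\delta$-dependent way: each slope $-v_1/v_2$ should be attained only at a point $p$ with $v_1(A-p_1)+v_2(B-p_2)<\delta$, where $(A,B)$ is the corner. For the finitely many slopes with $v_1+v_2\le N$ this is easy to arrange, by making the curve a tiny quarter-circle of radius $\eta\ll\delta/N$. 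For the remaining slopes I would try to perturb the curved portion inside its $T^2$-invariant class so that those tori become irrational. This amounts to making the slope function on $\partial\Omega$ constant on the tori in question and rotating the flow slightly by a non-toric perturbation supported near the corner. The point is that no simple orbit of large $(v_1,v_2)$ survives, while the Liouville condition is preserved because the perturbation is $C^\infty$-small.

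If this perturbation step cannot be carried out while keeping the contact-type condition, then the uniform statement needs rethinking. In that case I would check exactly which orbits the later argument actually uses, namely those null-homologous orbit sets relevant to $c_1$, since it is there that a uniform bound is needed.
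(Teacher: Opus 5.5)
Your local analysis matches the paper's. You use fibres over each $\Sigma_i$ away from the intersection points, the toric model $X_\Omega$ at each corner, and homology classes read off via \Cref{lem:graph_homology}. Computing periods by Stokes from the residues of the Liouville form is a harmless variant of reading off $v_1p_1+v_2p_2$ from the moment image. One justification is off. $\beta$ has zero periods not because torus loops bound in the complement; they need not, since a meridian of a smooth cubic in $\C P^2$ has order $3$ in $H_1$ of the complement. The right reason is that \eqref{eq:singpolar}, together with linear independence of the $\mathrm{PD}[\Sigma_k]$, forces the residues of any primitive of $\w$ on $X\setminus\bigcup_k\Sigma_k$ to be $-a_k$.

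The genuine gap is the last step. You correctly observe that the deviation $v_1a_i\rho_i^2+v_2a_j\rho_j^2$ grows linearly in $v_1+v_2$, but your proposed fix cannot work, for two reasons.

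\emph{Toric smoothings.} A smooth curve $\pd\Omega$ joining the horizontal edge to the vertical edge has tangent slope sweeping all of $(-\infty,0)$. So no $T^2$-invariant smoothing avoids any rational slope.

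\emph{Non-toric perturbations.} The corner region is a twist region: the return map to $\{\theta_i+\theta_j=0\}$ is an area-preserving twist map. After a perturbation supported inside the corner region, Poincar\'e--Birkhoff still produces periodic orbits of every rational rotation number strictly between those of the unperturbed boundary tori. These lie in classes $(v_1,v_2)$ with $v_1+v_2$ arbitrarily large, and their actions still fall short of $v_1a_i+v_2a_j$ by roughly $c(v_1+v_2)$, with $c>0$ fixed by the size of $\mathcal{U}$.

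So an additive error $\delta$ uniform in $(v_1,v_2)$ is out of reach by this route, and for the toric smoothing it is plainly false. What the paper's discussion proves is that periods are ``slightly smaller than $v_1(a_i-\ep)+v_2(a_j-\ep)$''. What \Cref{lem:sing_div_c1_est} uses is the bound $\sum_i v_i(a_i-\delta)$ in \eqref{eq:c1_graph_bound}. Both amount to an error of $\delta$ per unit of multiplicity:
\[
v_1(a_i-\delta)+v_2(a_j-\delta)\;\le\;\mathcal{A}(\gamma)\;\le\;v_1a_i+v_2a_j .
\]
Your own Stokes formula gives this immediately, with no perturbation, once $a_k\rho_k^2<\delta$ for every $k$ along $Y_\Gamma$ (equivalently, once $\ep$ plus the size of the rounding is below $\delta$). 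Your final paragraph points this way but never closes the argument. Drop the perturbation step and prove the proposition in this per-multiplicity form.
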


\begin{lemma}\label{lem:sing_div_c1_est}
	Let $(Y_\Gamma, \lambda_\delta)$ be the contact-type hypersurface given in the statement of \Cref{prop:div_comp_Reeb}. Suppose $K\in \N$ divides all the entries of the matrix $Q_\Gamma$. Then the first element of the ECH spectrum satisfies
	\[c_1(Y_\Gamma, \lambda_\delta)\geq K\cdot \Big(\min_{i=1,\ldots, \ell}(a_i)-\delta\Big). \]
\end{lemma}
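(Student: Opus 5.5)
The argument mirrors the first part of \Cref{lem:preq}. By spectrality, $c_1(Y_\Gamma,\lambda_\delta)$, if finite, equals the action of some null-homologous orbit set $\alpha=\{(\gamma_r,m_r)\}$ (after passing to a limit of non-degenerate perturbations, where the spectrality statement still gives an orbit set of the degenerate form with $[\alpha]=0$). If $c_1$ is infinite there is nothing to prove. So I will show that every nonempty null-homologous orbit set of $(Y_\Gamma,\lambda_\delta)$ has action at least $K(\min_i a_i-\delta)$. I will use the description of simple orbits in \Cref{prop:div_comp_Reeb}: each simple orbit satisfies $[\gamma_r]=v_1^{(r)}\theta_{i_r}+v_2^{(r)}\theta_{j_r}$ and has period within $\delta$ of $v_1^{(r)}a_{i_r}+v_2^{(r)}a_{j_r}$.

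First I sum over the orbit set. Write $[\alpha]=\sum_{i=1}^\ell n_i\theta_i$, where $n_i\ge 0$ is the total weighted count of $\theta_i$'s, i.e. the sum of $m_r v^{(r)}$ over all orbits involving $\Sigma_i$. The action satisfies
\[\mathcal{A}(\alpha)\ge \sum_r m_r\big(v_1^{(r)}a_{i_r}+v_2^{(r)}a_{j_r}-\delta\big).\]
Since each simple orbit has $v_1+v_2\ge 1$, the $\delta$ error is at most $\delta$ times $\sum_r m_r(v_1^{(r)}+v_2^{(r)})=\sum_i n_i$. This gives $\mathcal{A}(\alpha)\ge(\min_i a_i-\delta)\sum_i n_i$. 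Here $\delta$ is taken small relative to $\min_i a_i$, and if necessary I will absorb the per-orbit error more carefully by choosing $\delta$ as a per-unit error, which is what the construction provides since periods scale with $v_1,v_2$.

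Next I use homology. Since $\theta_i$ lies in the torsion-type summand $\coker(Q_\Gamma)$ of \Cref{lem:graph_homology}, and the splitting there is compatible, $[\alpha]=0$ means the vector $n=(n_1,\dots,n_\ell)$ lies in the image of $Q_\Gamma:\Z^\ell\to\Z^\ell$. Here I use that $\theta_i$ is the image of the standard basis vector $e_i$ under $\Z^\ell\cong H_2(X_\Gamma,Y_\Gamma)\to H_1(Y_\Gamma)$; this identification, namely that the image of $e_i$ is the fibre class of the circle bundle over $\Sigma_i$, is the point needing care. Since $K$ divides every entry of $Q_\Gamma$, every coordinate of $n$ is divisible by $K$. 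The vector $n$ is nonzero, because a nonempty orbit set has some $m_r v^{(r)}>0$ and all $n_i\ge 0$. So some $n_i\ge K$, hence $\sum_i n_i\ge K$, and therefore $\mathcal{A}(\alpha)\ge K(\min_i a_i-\delta)$.

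The main obstacle is the bookkeeping. I need to check that the classes $\theta_i$ attached to orbits in \Cref{prop:div_comp_Reeb} coincide with the generators of $\coker(Q_\Gamma)$ described above. I also need to check that the $\delta$-errors are controlled per unit of $v_1+v_2$ and not merely per simple orbit; if they are only controlled per simple orbit, I would shrink the neighbourhood so that the error is at most $\delta(v_1+v_2)$, which holds for the toric smoothing since the periods are close to $v_1(a_i-\ep)+v_2(a_j-\ep)$. Finally, I need to justify passing spectrality through the degenerate limit: any limiting null-homologous orbit set still obeys the action bound, and the empty orbit set is excluded because $c_1>0$.
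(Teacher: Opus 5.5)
Your proposal is correct and is essentially the paper's proof. Spectrality (via a degenerate limit) gives a nonempty null-homologous orbit set. Its total class is a nonzero nonnegative vector $n\in\im(Q_\Gamma)\subset (K\Z)^\ell$, using the injection $\coker(Q_\Gamma)\hookrightarrow H_1(Y_\Gamma)$, and its action is at least $\sum_i n_i(a_i-\delta)\geq K(\min_i a_i-\delta)$.

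Your worry about per-orbit versus per-unit errors is already settled by your own observation that $v_1+v_2\geq 1$. So no further shrinking of the neighbourhood is needed.
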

\begin{proof}
	By spectrality, $c_1(Y_\Gamma, \lambda_\delta)$ must equal the action of some null-homologous orbit set of the Reeb flow (our Reeb flow is degenerate, but spectrality continues to hold by Arzel\`a--Ascoli). It then follows from \Cref{lem:graph_homology} and \Cref{prop:div_comp_Reeb} that
	\begin{equation}\label{eq:c1_graph_bound}
	\begin{split}
		c_1(Y_\Gamma, \lambda_\delta) &\geq \min \bigg\{\sum_{i=1}^m \mathcal{A}(\gamma_i) \,\bigg|\, \text{$\gamma_1,\ldots,\gamma_m$ Reeb orbits},\, \sum_{i=1}^m [\gamma_i]=0\in H_1(Y_\Gamma) \bigg\}\\
		&\geq  \min \left \{\sum_{i=1}^\ell v_i(a_i-\delta) \,\bigg|\, \vec{v}=(v_1,\ldots, v_\ell) \in \Z_{\geq0}^\ell,\, \vec{v} \in \im(Q_\Gamma) \setminus \{0\}\right\}. 
	\end{split}
	\end{equation}
	Observe that if $K$ divides all the entries of $Q_\Gamma$, then $Q_\Gamma$ is $K$ times an element of $\GL(\ell,\Z)$ and hence the image of $Q_\Gamma$ lands in the lattice $(K\Z)^\ell \subset \Z^\ell$. From \eqref{eq:c1_graph_bound}, we find
	\begin{align*}
		c_1(Y_\Gamma, \lambda_\delta) &\geq \min \left \{\sum_{i=1}^\ell v_i(a_i-\delta) \,\bigg|\, \vec{v} \in (K\Z_{\geq 0})^\ell,\, \vec{v} \neq 0\right\}\\
		&\geq K\cdot \Big(\min_{i=1,\ldots, \ell}(a_i)-\delta\Big). \qedhere
	\end{align*}
	\end{proof}

\begin{proof}[Proof of \Cref{thm:closed_inf}]
Let $(X,\w)$ be a closed symplectic 4-manifold. If the symplectic form is rational, the ECH capacities of $(X,\w)$ are infinite by \Cref{prop:closed_rat_inf}.

If the symplectic form is not rational, consider a singular Donaldson divisor as described in the preceding pages. This yields a decomposition $\w = \sum_{i=1}^\ell \lambda_i \w_i$ for rational symplectic forms $\w_i$ and $\lambda_i>0$. For any $k_1,\ldots, k_\ell$ all sufficiently large, we obtain a collection of connected closed embedded symplectic submanifolds $\Sigma_1,\ldots, \Sigma_\ell$ which intersect transversely, positively, and $\w$-orthogonally so that $\mathrm{PD}[\Sigma_i] = k_i \w_i$. If the union of these divisors has more than one connected component, then the same proof as for \Cref{lem:Dondisc} implies the ECH capacities of $(X,\w)$ are infinite. Thus we may also assume the union of the divisors is connected.

Fix $K\in\N$ sufficiently large. Choose the degrees $k_i$ so that for each $i$,
\begin{equation}\label{eq:div_div}
	[\Sigma_i] \in K\cdot H_2(X;\Z) \subset H_2(X;\Z).
\end{equation}
With the forms $\w_1,\ldots, \w_\ell$ fixed, let $L$ be large enough so that $L\w_i$ lives in the integral lattice for each $i$ and there is a Donaldson divisor dual to $tL\w_i$ for any $t\in\N$. We can then assume the $k_i$ for which \eqref{eq:div_div} is satisfied are chosen so $k_i \leq K L$ for each $i$, regardless of $K$.

By \Cref{prop:div_comp_Reeb}, for any $\delta>0$, we obtain an embedded Liouville domain $(W,\w) \hookrightarrow (X,\w)$ in the complement of a tubular neighbourhood of the surfaces $\Sigma_i$. The boundary of this domain is the contact manifold $(Y_\Gamma, \lambda_\delta)$, where $\Gamma$ is the vertex-labelled graph associated to the singular configuration of divisors.  Let $Q_\Gamma$ be the associated intersection matrix, as in \Cref{lem:graph_homology}. Note the $(i,j)$th entry of $Q_\Gamma$ is exactly
\[(Q_\Gamma)_{ij} = [\Sigma_i]\cdot [\Sigma_j].\]
Because of the condition \eqref{eq:div_div}, each entry $(Q_\Gamma)_{ij}$ is divisible by $K^2$. Accordingly, \Cref{lem:sing_div_c1_est} applies to give
\[c_1(Y_\Gamma, \lambda_\delta) \geq K^2 \min_{i=1,\ldots, \ell} (\lambda_i k_i^{-1}) -K^2\delta.\]
Note by definition and monotonicity of the ECH capacities,
\[c_1(Y_\Gamma, \lambda_\delta) = c_1^\ECH(W,\w) \leq c_1^\ECH(X,\w).\]
This is true for $\delta$ arbitrarily small, so we deduce
\begin{align*}
c_1^\ECH(X,\w) &\geq K^2 \min_{i=1,\ldots, \ell} (\lambda_i k_i^{-1}).
\shortintertext{Since we took $k_i \leq KL$,}
&\geq  \frac{K}{L}\min_{i=1,\ldots, \ell} (\lambda_i).	
\end{align*}
But recall we could take $K$ arbitrarily large for $L$ and $\lambda_1,\ldots, \lambda_\ell$ fixed and positive. Hence $c_1^\ECH(X,\w)$ must be infinite.
\end{proof}

We now prove \Cref{thm:posinf} via the following more precise result.
\begin{theorem}\label{thm:posinf_div}
	Suppose $(X,\w)$ is a closed, rational symplectic 4-manifold satisfying 
\begin{equation}\label{eq:c1_cond}
	c_1(TX,\w)\cdot [\w] \leq 0.
\end{equation}
Let $\Sigma$ be a closed embedded symplectic surface so that $\mathrm{PD}[\Sigma] = k[\w]\in H^2(X;\Z)$ for some $k>0$. Then the complement of a small tubular neighbourhood of $\Sigma$ is a Liouville domain whose ECH capacities are infinite.
\end{theorem}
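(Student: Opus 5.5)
The plan is to reduce to \Cref{lem:preq}(2) via the adjunction formula. By rescaling we may assume $[\w]$ is integral. If $\Sigma$ is disconnected, the complement of a tubular neighbourhood is a Liouville domain with disconnected boundary, and \cite[Thm.\!\ 1.2]{Bei} gives infinite capacities exactly as in \Cref{lem:Dondisc}. So assume $\Sigma$ is connected, of genus $g$.

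As in the discussion of the Biran decomposition, the complement $M$ of a small tubular neighbourhood of $\Sigma$ is a Liouville domain. This follows from $\mathrm{PD}[\Sigma]=k[\w]$: the form $\w$ is exact on $X\setminus\Sigma$, with a primitive that winds like $\frac{1}{k}$ times a connection form near $\Sigma$. Its boundary is, up to the constant rescaling by $\frac1k$ and shrinking of the neighbourhood, the prequantization contact manifold $(Y,\lambda_\pre)$. Here $Y$ is the circle bundle over $\Sigma$ with Euler number $-[\Sigma]^2=-k^2[\w]^2$, so in the notation of \Cref{lem:preq} we have $e=[\Sigma]^2$. By definition of the ECH capacities and conformality, $c_1^\ECH(M,\w)$ is a positive multiple of $c_1(Y,\lambda_\pre)$. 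It therefore suffices to show $c_1(Y,\lambda_\pre)=\infty$, and by \Cref{lem:preq}(2) this reduces to checking $e\leq 2g-2$.

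The key step is the adjunction formula for the embedded symplectic surface $\Sigma$:
\[2g-2=[\Sigma]^2-c_1(TX,\w)\cdot[\Sigma]=[\Sigma]^2-k\,c_1(TX,\w)\cdot[\w].\]
The hypothesis \eqref{eq:c1_cond} makes $-k\,c_1\cdot[\w]\geq0$, so $2g-2\geq[\Sigma]^2=e$. Then \Cref{lem:preq}(2) gives $c_1(Y,\lambda_\pre)=\infty$, and by the ECH spectrum's monotonicity in $k$ all $c_k^\ECH(M,\w)$ are infinite.

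The main obstacle is the identification of the boundary contact form with exactly a constant multiple of $\lambda_\pre$ on the correct circle bundle, including the orientation and sign conventions for the Euler number, since the inequality $e\le 2g-2$ is sign sensitive. I would handle this exactly as in the Biran discussion. The Reeb flow on the boundary of a standard symplectic disk bundle is tangent to the fibres, which forces the boundary to be prequantization. I would then also check that the Liouville structure on $X\setminus\Sigma$ exists even without Donaldson's construction. This only uses that $\w|_{X\setminus\Sigma}$ is exact with the prescribed residue along $\Sigma$, which follows from $\mathrm{PD}[\Sigma]=k[\w]$ and a standard normal form near $\Sigma$.
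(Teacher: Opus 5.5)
Your proposal is correct and follows essentially the same route as the paper. You rescale to make $[\w]$ integral and dispatch the disconnected case via \Cref{lem:Dondisc}. You identify the complement with the Liouville piece of the Biran decomposition, whose boundary is the prequantization bundle of Euler number $-[\Sigma]^2$. Finally, you use adjunction together with \eqref{eq:c1_cond} to get $[\Sigma]^2\leq 2g-2$, so that \Cref{lem:preq}(2) applies. One cosmetic point: in your last step you reuse $k$ for both the degree of $\Sigma$ and the capacity index, so you may want to rename one of them.
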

Note this implies \Cref{thm:posinf} since we can apply \Cref{thm:posinf_div} to any Donaldson divisor.
\begin{proof}
Suppose we are given $(X,\w)$ closed, rational, and satisfying \eqref{eq:c1_cond}. Note if the theorem is proven for a symplectic manifold $(X,\w)$, it is also proven for any rescaling of $\w$, since a surface being symplectic is independent of scaling, and we can rescale the embedded Liouville domain and use monotonicity of the capacities. Thus, without loss of generality, we may rescale $\w$ to be integral; \eqref{eq:c1_cond} will continue to hold. 

Now $\Sigma$ can be regarded as a Donaldson divisor for $(X,\w)$ and the complement of a tubular neighbourhood of $\Sigma$ is Liouville isotopic to the Liouville domain piece of the Biran decomposition, as explained above. Call this domain $(M,\dd\lambda)$.

If $\Sigma$ is disconnected, \Cref{lem:Dondisc} already implies the ECH capacities of $(M,\dd\lambda)$ are infinite. Otherwise, since $\Sigma$ is a connected closed embedded symplectic surface in $(X,\w)$ of some genus $g$, the adjunction formula dictates that
		\begin{equation}\label{eq:adj}
			2g-2 = [\Sigma]^2 - \langle c_1(TX,\w),  [\Sigma]\rangle.
		\end{equation}
	Since $[\Sigma]$ is Poincar\'{e} dual to $k[\w]$,  \eqref{eq:c1_cond} in conjunction with \eqref{eq:adj} gives
	\begin{align*}
		[\Sigma]^2 &=2g-2 +k\cdot  c_1(TX,\w)\cdot [\w]\\
		 &\leq 2g-2.
	\end{align*}
	The boundary of $(M,\dd\lambda)$ in the Biran decomposition is the prequantization contact manifold $(Y,\xi_\pre)$ where $Y$ is the $S^1$-bundle of Euler number $-e = -[\Sigma]^2$ over the genus $g$ surface $\Sigma$. We have just computed $e\leq 2g-2$, and so \Cref{lem:preq} implies the ECH spectrum of $(Y,\xi_\pre)$ is infinite. So, by definition, the ECH capacities of $(M,\dd\lambda)$ are infinite. 
\end{proof}
\begin{example}\label{ex:bad_uniruled}
	It is worth enumerating exactly which rational symplectic 4-manifolds satisfy \eqref{eq:c1_cond}, and hence \Cref{thm:posinf} and \Cref{thm:posinf_div} apply to. Firstly, \eqref{eq:c1_cond} holds for anything with $b_2^+>1$ \cite{MS96} or anything which is not K\"ahler \cite{LN}. So we focus on K\"ahler 4-manifolds with $b_2^+=1$, which is equivalent to asking that the K\"ahler manifold have geometric genus $p_g=0$. By way of contrast, \Cref{thm:alt_fin} will imply the alternative ECH capacities are finite for all examples below.
\begin{enumerate}[label=\textbf{\arabic*.}, leftmargin = 0.8cm]
	\item If $X$ is anything K\"ahler with $p_g=0$ and Kodaira dimension greater than $-\infty$, then \eqref{eq:c1_cond} holds \cite{MS96}. This includes many well known K\"ahler surfaces such as the Enriques, Dolgachev, Barlow, and hyperelliptic surfaces.
	\end{enumerate}
	The only remaining symplectic 4-manifolds, for which \eqref{eq:c1_cond} might fail, are symplectic rational or blown-up symplectic ruled K\"ahler surfaces.
	\begin{enumerate}[resume]
	\item Let $X=S^2\times \Sigma_g$ be the product ruled surface with $g\geq 2$. Consider volume forms $\sigma,\tau$ on $S^2$ and $\Sigma_g$ respectively, which are normalized to have volume one. For any $a,b>0$, we have a product symplectic form $\w_{a,b}$ on $X$ given by 
	\[\w_{a,b} = a\cdot \mathrm{pr}_1^\ast\sigma+b\cdot \mathrm{pr}_2^\ast\tau.\]
	Note that
	\[\mathrm{PD}(c_1(TX,\w_{a,b}))= (2-2g)[S^2]+2[\Sigma_g].\]
	Hence,
	\[c_1(TX,\w_{a,b})\cdot \w_{a,b} = (2-2g)a+2b.\]
	Provided $a> \frac{b}{g-1}$ (i.e. the fibre of $X$ is not too small relative to the base), we see that $(X,\w_{a,b})$ will satisfy \eqref{eq:c1_cond}. A similar computation will apply for non-product ruled surfaces and their blow-ups.
	\item Consider the blow-up $X=\C P^2 \# 10 \overline{\C P^2}$. Recall that $\C P^2$ admits a full symplectic packing of 9 or more equi-sized balls. In particular, we may find a symplectic embedding
	\[\coprod_{i=1}^{10} B^4(\lambda)\hookrightarrow \mathrm{int}(B^4(1)) \hookrightarrow \C P^2(1)\] 
	provided $\lambda< \frac{1}{\sqrt{10}}$. We then obtain a symplectic form $\w_\lambda$ on $X$ by blowing up along these 10 embedded balls. Let $H$ denote the hyperplane class in $H^2(X)$ and $E_1,\ldots, E_{10}$ the classes of exceptional divisors of the blow-ups. One has
	\[[\w_\lambda] = H - \lambda(E_1+\cdots +E_{10}).\]
	Recall that
	\[c_1(TX,\w_\lambda) = 3H -(E_1+\cdots +E_{10}).\]
	Hence, 
	\[c_1(TX,\w_\lambda)\cdot [\w_\lambda] = 3-10\lambda.\]
	Choosing $\sqrt{10} > 10 \lambda \geq 3$ implies $(X,\w_\lambda)$ satisfies \eqref{eq:c1_cond}. This same construction works for any blow-up $\C P^2 \# N \overline{\C P^2}$ with $N\geq 10$. It also applies to blow-ups of ruled surfaces over $T^2$.
	\end{enumerate}
\end{example}

We now briefly justify \Cref{conj:posinf}. Recently, Mark--Tosun proved the following.

\begin{theorem}[{\cite[Thm. 1.7]{MT}}]
	Let $(X,\w)$ be a closed symplectic 4-manifold with 
	\[c_1(TX,\w)\cdot [\w]>0.\]
	Let $(W,\w|_W)$ be a Weinstein domain embedded in $(X,\w)$. Then, possibly after shrinking $W$ under the reverse Liouville flow for finite time, one can find a closed connected embedded symplectic surface $\Sigma \subset X$ of genus $g$ which avoids the shrunken domain and satisfies
	\begin{equation}\label{eq:MTcond}
		[\Sigma]^2 > \max\{0, 2g-2\}. 
	\end{equation}
\end{theorem}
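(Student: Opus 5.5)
This statement is quoted from Mark--Tosun \cite[Thm.\!\ 1.7]{MT}, so what follows is how I would reconstruct it rather than an argument internal to this paper. The plan is to produce $\Sigma$ from Gromov--Taubes theory in the complement of a shrunken copy of $W$, using the hypothesis $c_1(TX,\w)\cdot[\w]>0$ to obtain non-vanishing Seiberg--Witten invariants.

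\textbf{Step 1 (topology).} By the results recalled after \Cref{thm:posinf}, namely \cite{MS96} and the appendix of \cite{LN}, the hypothesis $c_1(TX,\w)\cdot[\w]>0$ forces $b_2^+(X)=1$ and $(X,\w)$ symplectically rational or ruled. In particular, the wall-crossing formula shows that $SW_+(X,\mathfrak{s}_\w+A)\neq 0$ for every class $A$ that is sufficiently positive with respect to $[\w]$. A convenient family is $A=-mK_X$ plus a large multiple of a class of positive square, with the positivity of $c_1\cdot[\w]$ controlling the sign of the wall-crossing term.

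\textbf{Step 2 (avoiding $W$).} Shrink $W$ along the reverse Liouville flow until it lies in an arbitrarily small neighbourhood of its skeleton $\Delta$, which is isotropic of dimension at most two because $W$ is Weinstein. Choose a tame $J$ that is cylindrical near the shrunken boundary, and stretch the neck there. Taubes' SW$=$Gr produces $J$-holomorphic representatives of $A$. In the limit the top level lives in $X\setminus W$, while any pieces entering $W$ are holomorphic curves in the completion $\widehat W$ with only positive ends.

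\textbf{Step 3 (excluding the pieces in $W$).} I would rule out, or cap off, the pieces in $\widehat W$ using exactness together with the fact that the skeleton is low-dimensional. For example, by generically perturbing the point constraints and using a dimension count, the curves can be pushed off a neighbourhood of $\Delta$ so that they miss $W$ altogether. This yields an embedded symplectic surface in $X\setminus W$ representing $A$.

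\textbf{Step 4 (the inequality).} Taubes' curves in the class $A$ are embedded, and adjunction gives
\[
2g-2 = A^2 - c_1\cdot A .
\]
Condition \eqref{eq:MTcond} therefore reduces to $c_1\cdot A>0$ together with $A^2>0$, and both hold for the sufficiently positive classes chosen in Step 1. Connectedness follows from the light cone lemma when $b_2^+=1$.

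The main obstacle is Step 3: guaranteeing that the Gromov--Taubes curve genuinely avoids the Weinstein domain. Here I would rely on the neck-stretching plus dimension-count argument in the style of Mark--Tosun.
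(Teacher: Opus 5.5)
\textbf{Step 3 is a genuine gap, and it carries the whole content of the theorem.}

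Exactness of $\w|_W$ does not exclude holomorphic pieces in $\widehat W$. It only forces them to have positive ends, and such curves are plentiful, e.g.\ the complex lines $\{z_2=c\}$ in $\C^2=\widehat{B^4}$. Once part of the limit building lies in $\widehat W$, the level in $X\setminus W$ is a punctured curve with negative ends on $\pd W$. You have no way to cap those ends off inside $X\setminus W$.

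A dimension count does not rescue this. The skeleton of a four-dimensional Weinstein domain is typically $2$-dimensional (e.g.\ a Lagrangian sphere). A $J$-holomorphic curve meets a $2$-dimensional set in a $4$-manifold in isolated transverse points, which are stable. So perturbing the point constraints or $J$ buys transversality, not disjointness. When $A\cdot j_*C\neq 0$ for some cycle $C\subset W$, the intersection is even homologically forced.

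This also shows your class is not adapted to the problem. Any surface in $X\setminus W$ pairs trivially with $j_*H_2(W)$, and $A=-mK_X+NP$ with $P$ an arbitrary class of positive square need not. Take, for instance, $P\cdot[L]\neq 0$ for a Lagrangian sphere $L\subset W$.

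The paper's sketch, following \cite{MT}, takes a different route that sidesteps all of this:
\begin{itemize}
\item Perturb $\w$ to a rational form $\w'$. It can be taken to agree with $\w$ near the skeleton, because $[\w]$ lies in the rational subspace $\ker\big(H^2(X;\R)\to H^2(W;\R)\big)$.
\item Take $\Sigma$ to be a Donaldson divisor with $\mathrm{PD}[\Sigma]=k[\w']$, built by approximately holomorphic methods \cite{Don} so that it misses a neighbourhood of the skeleton. Then shrink $W$ into that neighbourhood. What makes the avoidance possible is that the skeleton is \emph{isotropic}, not merely that it is low-dimensional.
\item For $k$ large, $\Sigma$ is connected \cite{Gir}.
\item Adjunction gives $[\Sigma]^2=2g-2+k\,c_1(TX,\w)\cdot[\w']>2g-2$, together with $[\Sigma]^2=k^2[\w']^2>0$.
\end{itemize}
This last step is exactly where $c_1\cdot[\w]>0$ enters, not through the sign of a wall-crossing term.

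Your light-cone argument for connectedness also needs repair. Disjoint components of a Gromov--Taubes curve can have negative square (e.g.\ exceptional spheres), and there the light cone lemma says nothing.
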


The proof of this theorem proceeds by first perturbing $\w$ to be rational, and then finding a Donaldson divisor for the perturbed symplectic form and showing it can be taken to avoid the isotropic skeleton of $(W,\w)$. The condition \eqref{eq:MTcond} follows as in our proof of \Cref{thm:posinf} from the adjunction inequality and the assumption on $c_1(TX,\w)$.

In the situation of Mark--Tosun's theorem, let $(\tilde{Y},\tilde{\xi})$ denote the contact structure on the boundary of $(W,\w)$. Since $[\Sigma]^2>0$, the region $X\setminus (\Sigma \cup W)$ gives rise to an exact symplectic cobordism from $(Y,\xi_{\pre})$ to $(\tilde{Y},\tilde{\xi})$, where $Y$ is the circle bundle over a genus $g$ surface with Euler number $-[\Sigma]^2$. In light of \Cref{rmk:preq}, the condition \eqref{eq:MTcond} implies the ECH spectrum of $(Y,\xi_{\pre})$ is all finite. By functoriality properties of ECH under cobordisms, this implies the ECH spectrum of $(\tilde{Y},\tilde{\xi})$ is finite and hence the ECH capacities of $(W,\w)$ are finite.

Thus, we have proved \Cref{conj:posinf} for Weinstein domains. If one could  extend Mark--Tosun's result to Liouville domains, then \Cref{conj:posinf} would follow.

\subsection{Zehnder tori, neck-stretching, and infinite alternative capacities}
This setion is devoted to a proof of \Cref{thm:alt_inf}. Consider a closed symplectic 4-manifold $(X,\Omega)$ admitting a hypersurface $Y \subset X$ which is a 3-torus with $S^1$-valued  coordinates $x_1,x_2,x_3$ so that
\[\w = \Omega|_Y =A_{12}\dd{x_1}\wedge\dd{x_2}+A_{13}\dd{x_1}\wedge\dd{x_3} +A_{23}\dd{x_2}\wedge\dd{x_2} \] 
and the triple of real numbers $(A_{12},A_{13},A_{23})$ is linearly independent over $\Q$. In particular, this means $A_{12}\neq 0$. Then we note
\[\dd{x_3} \wedge \w = A_{12} \dd{x_1}\wedge\dd{x_2}\wedge\dd{x_3}.\]
Hence $(\dd{x_3},\w)$ is a framed Hamiltonian structure on $Y$. Moreover, it is stable because $\dd{x_3}$ is closed. The associated  Hamiltonian vector field is
\[R = \frac{A_{23}}{A_{12}}\pd_{x_1} - \frac{A_{13}}{A_{12}}\pd_{x_2} + \pd_{x_3}.\]
Because of our assumption \eqref{eq:Zehhypcond}, this vector field defines an irrational rotation of the torus. The flow has no closed characteristics and all its trajectories are dense.

From the coistropic neighbourhood theorem for Hamiltonian hypersurfaces, we can find an open subset $i:U \subset X$ with $U= (-\varepsilon,\varepsilon)_t\times Y$, so that
\[i^\ast\Omega = A_{12}\dd{x_1}\wedge\dd{x_2}+A_{13}\dd{x_1}\wedge\dd{x_3} +A_{23}\dd{x_2}\wedge\dd{x_2} +\dd{t}\wedge\dd{x_3}.\]

We now pick three distinguished open sets $V_-,  W, V_+ \subset U$ and a distinguished point $p\in U$. We take
\[V_- = \left(-\frac{2\ep}{3},-\frac{\ep}{3}\right)\times Y, \quad W =\left(-\frac{\ep}{4},\frac{\ep}{4}\right)\times Y, \qand V_+ =\left(\frac{\ep}{3},\frac{2\ep}{3}\right)\times Y.\]
Fix some point in $W$ as our distinguished point $p$.

Given some $\Omega$-compatible almost-complex structure $J$ on $X$, we can consider the moduli space $\mathcal{M}^J(X,\Omega; p)$ of closed non-constant $J$-holomorphic curves constrained to pass through the point $p$. 

\begin{lemma}\label{lem:fredholm}
	Given any $\Omega$-compatible $J$, there is an $\Omega$-compatible $\tilde{J}$ which differs from $J$ only on $W$ and a point $p\in W$ so that any $u
	\in \mathcal{M}^{\tilde{J}}(X,\Omega;p)$ does not have its image contained in $U$.
\end{lemma}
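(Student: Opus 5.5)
The plan is to build $\tilde J$ on $W$ adapted to the stable Hamiltonian structure $(\dd x_3,\w)$, and then use a maximum-principle argument for the coordinate function $t$ along a putative curve to force a contradiction with the irrational, orbit-free dynamics of $R$. I have not closed the argument: the step of controlling the curve outside $W$ (the third paragraph) is open, as explained there.

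\textbf{Construction of $\tilde J$.} On the collar $U=(-\ep,\ep)_t\times Y$ we have $\Omega=\pi^\ast\w+\dd t\wedge\dd x_3$. On a slightly smaller collar $W'\Subset W$ I take $\tilde J$ to be $(\dd x_3,\w)$-compatible in the sense of the definition in \Cref{sec:defs}: $\tilde J\pd_t=R$, and $\tilde J$ preserves $\xi=\ker(\dd x_3)\cap TY$ and is compatible with $\w|_\xi$. For $\ep$ small this is $\Omega$-compatible. In the shells $W\setminus W'$ I interpolate back to $J$ through $\Omega$-compatible structures, which is possible because that space is contractible. With this choice $\dd t\circ\tilde J=\dd x_3$ (up to sign) on $W'$. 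Hence $f=t+ix_3\colon W'\to(-\ep,\ep)\times S^1$ is $\tilde J$-holomorphic, and $t$ is $\tilde J$-pluriharmonic there.

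\textbf{Curves near $p$.} Let $u\in\mathcal M^{\tilde J}(X,\Omega;p)$ and suppose, for contradiction, that its image lies in $U$. Take $p$ with $t(p)=0$. On the component $S$ of $u^{-1}(W')$ containing a preimage of $p$, the map $f\circ u$ is holomorphic. By the open mapping theorem it is either open or constant.
\begin{itemize}
\item If it is constant, $u(S)$ lies in a slice $\{t=0,\,x_3=c\}$, i.e.\ in a $2$-torus $T$. Then $u$ covers that torus, and $T$ is a closed curve inside $W'$ containing $p$. To rule this out I perturb $\tilde J$ on $\xi$ along a single slice so that $\xi$ fails to be $\tilde J$-integrable near $p$, or equivalently choose $p$ and $J|_\xi$ so that the leaf through $p$ is not $\tilde J$-complex. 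This is an open condition, so it can be arranged while keeping the compatibility.
\item If it is open, the values of $t\circ u$ fill a neighbourhood of $0$.
\end{itemize}

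\textbf{Main obstacle: the curve outside $W'$.} In $U\setminus W'$ the structure $\tilde J=J$ is arbitrary, so $t\circ u$ has no sign of subharmonicity there, and a maximum-principle argument on all of $\Sigma$ is not directly available. I do not yet have an argument for this step. My first attempt would go through energy and homology. The curve $u$ has a class $A\in H_2(U)\cong H_2(T^3)$, and because $\dd t\wedge\dd x_3=\dd(t\,\dd x_3)$ is exact,
\[\mathcal E(u)=\langle[\w],A\rangle .\]
On $u^{-1}(W')$ both $u^\ast\pi^\ast\w$ and $u^\ast(\dd t\wedge\dd x_3)$ are pointwise nonnegative, since $\tilde J$ is SHS-compatible there.

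I would combine this with a neck-stretching-type choice: enlarge the region where $\tilde J$ is SHS-compatible, so that $u$ must spend a definite $\dd t\wedge\dd x_3$-energy crossing it. If I could then pass to a limit, I would obtain a finite-energy holomorphic curve in the symplectization of $(\dd x_3,\w)$. By the SFT/Gromov--Taubes compactness invoked for the alternative capacities, such a limit would have to be asymptotic to closed orbits of $R$, and there are none, by \eqref{eq:Zehhypcond}. That would be the desired contradiction. Two things are unresolved:
\begin{itemize}
\item Whether this limiting argument can be carried out at all while modifying $J$ only on $W$, as the lemma requires. This is the step I am least sure of.
\item Whether the contradiction can be reached for a single fixed $\tilde J$ rather than a stretched family.
\end{itemize}
If those fail, I would instead look for a $\tilde J$ on $W$ admitting a strictly plurisubharmonic function of $t$ that cannot attain an interior maximum at $p$. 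This would still leave the same difficulty in $U\setminus W'$.
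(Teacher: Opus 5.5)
There is a genuine gap, and in fact the $\tilde J$ you construct violates the conclusion of the lemma. If $\tilde J$ is $(\dd{x_3},\w)$-compatible on $W'$, it preserves the plane field $\xi=\ker(\dd{x_3})\cap TY=\mathrm{span}(\pd_{x_1},\pd_{x_2})$. This is exactly the tangent plane field of the tori $T_{c,c'}=\{t=c,\ x_3=c'\}$. A surface whose tangent planes are all $\tilde J$-invariant is $\tilde J$-holomorphic. So each $T_{c,c'}$ is an embedded closed $\tilde J$-holomorphic torus of energy $|A_{12}|>0$, and the one through $p$ lies in $W'\subset U$.

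Your open-mapping dichotomy correctly finds these tori, but the proposed repair cannot work. As long as $\tilde J\xi=\xi$, the leaves are complex no matter what $\tilde J|_\xi$ is. Once you break $\tilde J\xi=\xi$ near $p$, the function $t+ix_3$ is no longer holomorphic there, and the maximum-principle framework is lost. Neck-stretching does not rescue this either. A closed curve contained in $U$ need not cross any neck (these tori sit in a single $t$-slice), and the lemma concerns one fixed $\tilde J$ modified only on $W$. In the paper the logic runs the other way: the lemma is the input that guarantees the curves in the stretched manifolds leave the neck region.

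The missing idea is genericity plus an index count; this is the opposite of imposing symmetric structure near $p$.
\begin{itemize}
\item Perturb $J$ \emph{generically} on $W$ and choose $p\in W$ generic. It suffices to treat simple irreducible curves, since the underlying simple components of a curve in $U$ also lie in $U$.
\item Each such curve passes through $p\in W$, so it has somewhere-injective points in $W$, where $\tilde J$ is generic. Standard transversality then gives the moduli space near $u$ dimension $\ind(u)-2$, which forces $\ind(u)\ge 2$.
\item By adjunction, for $A=[u]$ we get $I(A)=A\cdot A+\langle c_1(TX,\Omega),A\rangle\ge \ind(u)\ge 2$.
\item If instead $u$ lies in $U$, then $A$ is in the image of $H_2(T^3)\cong H_2(U)\to H_2(X)$. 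Here $c_1$ vanishes because $TX|_U$ is symplectically trivial. Also $A\cdot A=0$ because $\{0\}\times Y$ is displaced by translation in $t$. Hence $I(A)=0$, a contradiction.
\end{itemize}
Your tori $T_{c,c'}$ are precisely such index-zero curves. For your symmetric $\tilde J$ they fill $U$ in a non-transversal two-parameter family, and the generic perturbation on $W$ is what prevents any of them from passing through $p$.
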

\begin{proof}
	Let $\mathcal{M}_s^J(X,\Omega; p)\subset \mathcal{M}^J(X,\Omega; p)$ denote the subspace of irreducible somewhere-injective curves. Given any curve $u\in \mathcal{M}^J(X,\Omega; p)$, one can look at the underlying simple curve for each of its irreducible components. If $u$ is contained in $U$, then so will each of its simple components. Thus it suffices to prove the lemma for $u\in\mathcal{M}_s^{J}(X,\Omega; p)$.
	
	By standard transversality methods, for a Baire-generic almost-complex structure $\tilde{J}$ which agrees with $J$ outside of $W$, and a generic point $p$, the moduli space of $\tilde{J}$-holomorphic curves with a marked point passing through $p$ and a somewhere injective point in $W$ is a smooth manifold and each of its connected components has the correct expected dimension (see e.g. \cite[Thm. 4.6.1]{Wendl}).
	
	Since $p\in W$, every curve in $\mathcal{M}_s^{\widetilde{J}}(X,\Omega; p)$ passes though $W$. And since we consider simple curves, and closed simple curves have a dense collection of somewhere injective points, every curve in $\mathcal{M}_s^{\tilde{J}}(X,\Omega; p)$ has a somewhere injective point in $W$. So for generic $p$ and $\tilde{J}$, near each $u\in \mathcal{M}_s^{\tilde{J}}(X,\Omega; p)$, the moduli space is a smooth manifold with local dimension $\ind(u)-2$. Here, we must subtract two because asking a curve pass through a point in a 4-manifold is a codimension two constraint.
	
	Fix such a generic pair of $p$ and $\tilde{J}$. Suppose $u\in \mathcal{M}_s^{\tilde{J}}(X,\Omega; p)$. Let the homology class represented by this curve be $[u]=A \in H_2(X;\Z)$. Recall the Gromov--Taubes index
	\[I(A) = A\cdot A +\langle c_1(TX,\Omega),A\rangle.\]
	From the adjunction formula, we have $I(A)\geq \ind(u)$ with equality if and only if $u$ is embedded. Because $u$ lives in a manifold of dimension $\ind(u)-2$, in order that $u$ exists at all, it must be that $\ind(u)\geq 2$. Hence $I(A)\geq 2$.
	
	We claim that $I(A)=0$ if $u$ is contained in the open set $U$, which will conclude the proof. Let $j: T^3 \hookrightarrow X$ denote the inclusion into $X$ of the fibre $\{0\} \times Y$ of $U$. If $u$ is contained in $U$, then $A$ must lie in the image of the map 
	\[j_\ast: H_2(T^3)\cong H_2(U) \to H_2(X).\]
	Suppose $A = j_\ast \tilde{A}$ for $\tilde{A}\in H_2(T^3)$. We have
	\[\langle c_1(TX,\Omega), A\rangle = \langle c_1(TX,\Omega), j_\ast\tilde{A} \rangle = \langle j^\ast c_1(TX,\Omega), \tilde{A}\rangle = \langle c_1 (TX|_{Y},\Omega),\tilde{A}\rangle.\]
	This vanishes because the tangent bundle of $X$ has a symplectic trivialization over $U$.
	
	Because $\im(j) \subset X$ is displaceable by a translation, $\im(j_\ast) \subset H_2(X)$ is an isotropic subspace with respect to the intersection form; in particular, $j_\ast \alpha\cdot j_\ast \alpha=0$ for any $\alpha \in H_2(T^3)$. Hence, if $u$ is contained in $U$, then $A\cdot A=0$ and so $I(A)=0$.
\end{proof}

\begin{proof}[Proof of \Cref{thm:alt_inf}] Fix some $(\lambda,\w)$-compatible almost-complex structures $J_\pm$ on the symplectization regions $V_\pm$. We claim one can smoothly extend these to an $\Omega$-compatible almost-complex structure $J$ on $X$. First, the pair $J_\pm$ naturally extends to the closure $\overline{V_-\cup V_+}$. The space of compatible complex automorphisms of $\R^4$ is isomorphic to the homogeneous space $F=\Sp(4)/\U(2)$. It is well known that this space is contractible. Thus, an $\Omega$-compatible almost-complex structure on $X$ is the same as a section of a certain fibre bundle over $X$ with contractible fibre $F$. We wish to extend a given section on $\overline{V_-\cup V_+}$ to all of $X$. By a standard application of obstruction theory, such an extension always exists because $F$ is contractible. 

We will modify $J$ via a sequence of ``neck-stretched" almost-complex structures. This procedure is described in many places, e.g. see \cite[\S1.3]{EGH} for the original reference or Cieliebak--Mohnke \cite[\S2.7]{CM} for the generality of SHSs. Neck-stretching and its effect on the holomorphic curves we will study is sketched in \Cref{fig:neck-stretch}.

Consider some strictly increasing diffeomorphism $\phi_k: [-k-\ep, k+\ep] \to [-\ep, \ep]$ which is linear with derivative one on $[-k-\ep, -k-\frac{2\ep}{3}]$ and $[k+\frac{2\ep}{3}, k+\ep]$ and restricts to the identity $[-\frac{\ep}{3}, \frac{\ep}{3}] \to [-\frac{\ep}{3}, \frac{\ep}{3}]$, as in \Cref{fig:neckdiffeo}. Moreover, choose $\phi_k$ so that they $C^\infty$-converge on compact sets to a smooth diffeomorphism $\phi_\infty: (-\infty,\infty) \to (-\frac{2\ep}{3},\frac{2\ep}{3})$. We can additionally make it so that pointwise $|\phi_k(t)|$ and $\phi'_k(t)$ are decreasing sequences. Using $\phi_k$, we obtain a symplectomorphism
\[\big((-k-\ep, k+\ep)_t\times Y, \w + \dd(\phi_k(t)\lambda)\big) \xrightarrow{\cong} \big((-\ep,\ep)\times Y,\w+\dd(t\lambda)\big)=(U,i^\ast\Omega).\]

\begin{figure}[h]
\begin{tikzpicture}
    \draw[->, thick] (-7.5, 0) -- (7.5, 0) ;
    \draw[->, thick] (0, -4) -- (0, 4);

    \begin{scope}[gray, dashed, thin]
        \draw (-6, 0) -- (-6, -3) -- (0, -3);
        \draw (-5, 0) -- (-5, -2) -- (0, -2);
        \draw (-1, 0) -- (-1, -1) -- (0, -1);
        \draw (1, 0) -- (1, 1) -- (0, 1);
        \draw (5, 0) -- (5, 2) -- (0, 2);
        \draw (6, 0) -- (6, 3) -- (0, 3);
    \end{scope}
\draw (-6, 0.1) -- (-6, -0.1) node[xshift=-4pt, above=4pt] {$-k-\ep$};
    \draw (-5, 0.1) -- (-5, -0.1) node[below=-1pt, xshift=-2pt] {$-k-\frac{2\ep}{3}$};
    \draw (-1, 0.1) -- (-1, -0.1) node[above=4pt, xshift=-4pt] {$-\frac{\ep}{3}$};
    
    \draw (1, 0.1) -- (1, -0.1) node[below=2pt, xshift=1pt] {$\frac{\ep}{3}$};
    \draw (5, 0.1) -- (5, -0.1) node[below=-1pt, xshift=2.5pt] {$k+\frac{2\ep}{3}$};
    \draw (6, 0.1) -- (6, -0.1) node[xshift=2pt, above=3pt] {$k+\ep$};

    \draw (0.1, -3) -- (-0.1, -3) node[right=2pt] {$-\ep$};
    \draw (0.1, -2) -- (-0.1, -2) node[right=2pt] {$-\frac{2\ep}{3}$};
    \draw (0.1, -1) -- (-0.1, -1) node[below=1pt, right=2pt] {$-\frac{\ep}{3}$};
    
    \draw (0.1, 1) -- (-0.1, 1) node[left=2pt] {$\frac{\ep}{3}$};
    \draw (0.1, 2) -- (-0.1, 2) node[left=2pt] {$\frac{2\ep}{3}$};
    \draw (0.1, 3) -- (-0.1, 3) node[left=2pt] {$\ep$};
    
    \draw[blue, thick] 
        (-6, -3) -- (-5, -2) 
        .. controls (-4.5, -1.5) and (-1.5, -1.5) .. (-1, -1)
        -- (1, 1)
        .. controls (1.5, 1.5) and (4.5, 1.5) .. (5, 2)
        -- (6, 3);
    \node[blue, right] at (2.5, 1.1) {$\phi_k$};
    \node[below right] at (0,0) {$0$};
\end{tikzpicture}
\begin{caption}{A choice for the function $\phi_k$.}\label{fig:neckdiffeo}
\end{caption}
\end{figure}

The preimages of $V_\pm$ under this symplectomorphism are stretched regions $V^k_\pm$, identified as
\[V^k_- = \left(-k-\frac{2\ep}{3},-\frac{\ep}{3}\right)\times Y, \qand V^k_+ =\left(\frac{\ep}{3},k+\frac{2\ep}{3}\right)\times Y.\]

 One can remove $U$ from $X$ and replace it with $(-k-\ep, k+\ep)\times Y$ to produce a new manifold $(X_k,\Omega_k)$. This will be symplectomorphic to $X$, but preferable for our purposes.

The almost-complex structure $J$ on $X$ determines one $J^k$ on $X_k$. This is given by extending the almost-complex structures $J_\pm$ on $V_\pm$ to $J_\pm^k$ on the stretched symplectizations $V_\pm^k$ so that they remain translation invariant and admissible; away from $V^k_\pm$, $J$ and $J^k$ agree. The almost-complex structure $J^k$ is $\Omega_k$-compatible; the net effect of this procedure, after applying the symplectomorphism, is to keep the symplectic structure the same but alter the compatible almost-complex structure so that the associated metric becomes very long in the $t$-direction along the regions $V_\pm$.

By \Cref{lem:fredholm}, we can find a sequence of points $p_k \in W$ and almost-complex structures $\tilde{J}^k$ on $X_k$ agreeing with $J^k$ off of $W$ so that any $u_k
	\in \mathcal{M}^{\tilde{J}^k}(X_k,\Omega_k;p_k)$ must not be contained in the region $(-k-\ep,k+\ep)\times Y$.

	We now proceed to assume that the first alternative ECH capacity of $X$ is finite and derive a contradiction. In particular, say there is $L>0$ so that $c_1^\Alt(X,\Omega)<L$. Since they are all symplectomorphic, $c_1^\Alt(X_k,\Omega_k) < L$ for each $k$. From the definition of the alternative capacities, we conclude there is some closed $\tilde{J}^k$-holomorphic curve $u_k:\Sigma_k \to X_k$ passing through $p_k$ with energy
	\[\mathcal{E}(u_k) = \int_{\Sigma_k}u_k^\ast \Omega_k <L.\]
	By definition, this curve $u_k$ lives in $\mathcal{M}^{\tilde{J}^k}(X_k,\Omega_k;p_k)$. As such, it cannot be contained in the region $(-k-\ep,k+\ep)\times Y$. By passing to a subsequence of $k$, we can assume that all curves $u_k$ exit this region through the same hypersurface, either $\{-k-\ep\}\times Y$ or $\{k+\ep\}\times Y$ (or possibly both). Assume they all pass through $\{k+\ep\}\times Y$ and the following argument will proceed identically in the opposite case.

\begin{figure}[h]
		\includegraphics[scale=0.75]{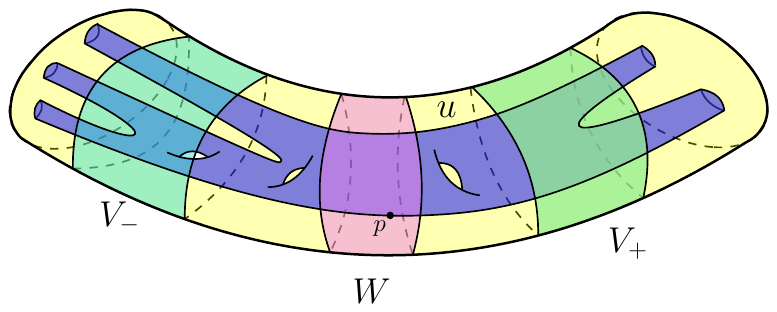}
		\includegraphics[scale=0.5]{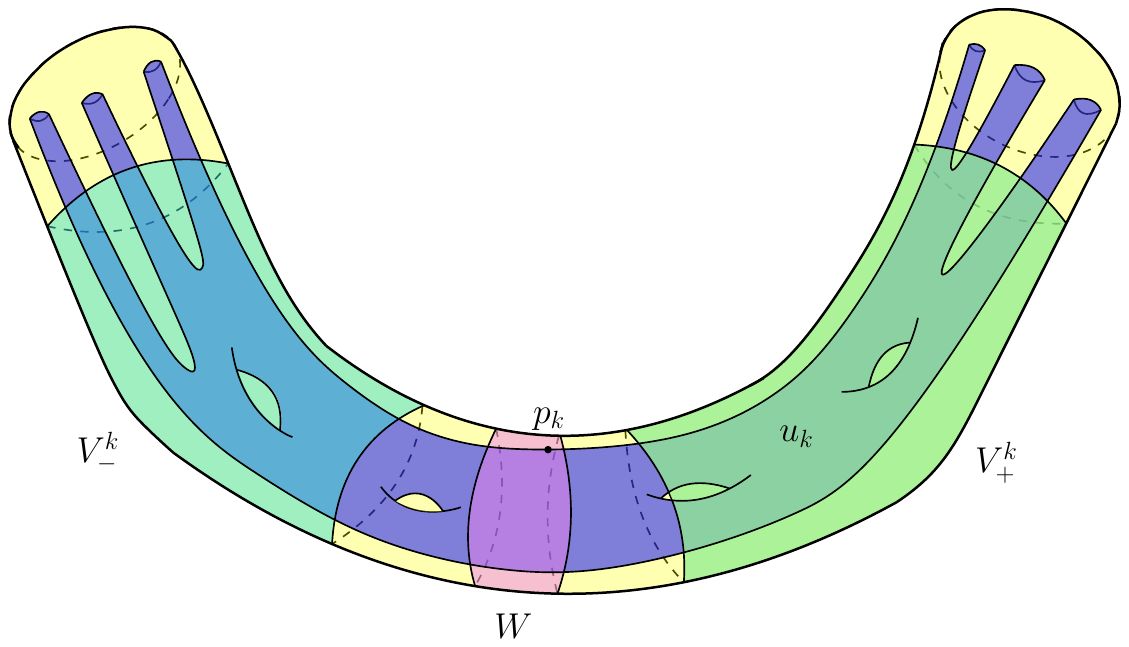}
		\includegraphics[scale=0.5]{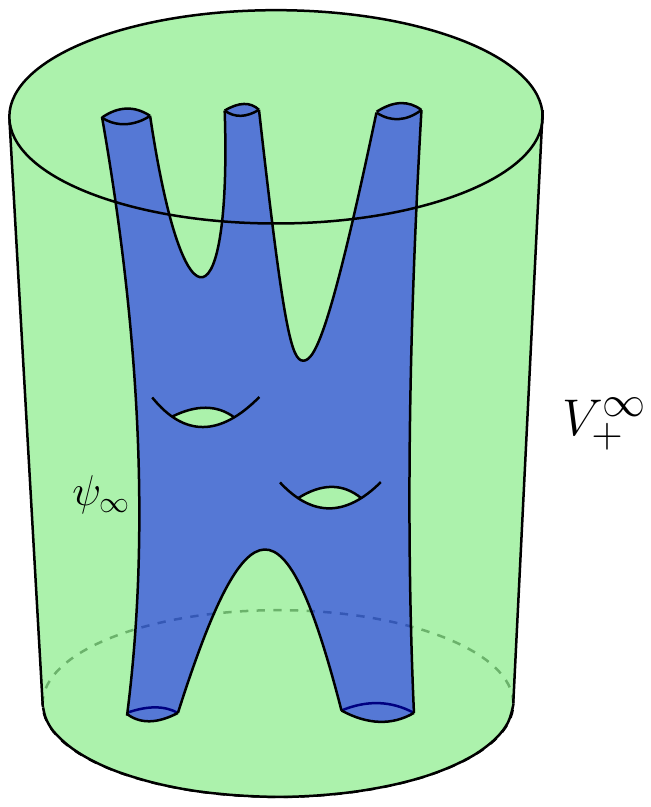}
		\begin{caption}{A schematic illustration of our neck-stretching procedure near the Zehnder hypersurface. The top picture is the original neighbourhood $U$ and a curve $u \in \mathcal{M}^{\tilde{J}}(X,\Omega;p)$. The bottom left is the stretched neighbourhood and a curve $u_k \in \mathcal{M}^{\tilde{J}^k}(X_k,\Omega_k;p_k)$. The bottom right is a curve $\psi_\infty$ in $V_\infty$ obtained as a limit of curves $u_k|_{V_+^k}$ in the stretched neck.}
		\label{fig:neck-stretch}
		\end{caption}
\end{figure}
	
	Perturbing $\ep$, we can restrict the domain and range of $u_k$ to define a sequence of somewhere-injective irreducible $J_+^k$-holomorphic maps
	\[\psi_k: C_k \to  \overline{V^k_+} =\left[\frac{\ep}{3},k+\frac{2\ep}{3}\right]\times Y,\]
	where $C_k$ is some compact Riemann surface with boundary mapping to the boundary of $\bar{V_+^k}$.
	Since the energy of a holomorphic curve is non-negative everywhere, we have
	\[\int_{C_k} \psi_k^\ast\big( \w + \dd(\phi_k(t)\lambda)\big) \leq \mathcal{E}(u_k) <L.\]
	
	By inclusion, we can view each curve $\psi_k(C_k)$ as living in the symplectic manifold
	\[(V_+^\infty,\Omega_\infty) = \Big(\left[\frac{\ep}{3},\infty\right)_t\times Y, \w +\dd(\phi_\infty(t)\lambda)\Big).  \]
	On compact sets, the symplectic forms $\Omega_k|_{V_+^k}$ will $C^\infty$-converge to $\Omega_\infty$. The almost-complex structures $J^k_+$ will $C^\infty$-converge on compact sets to an $(\lambda,\w)$-compatible $J^\infty_+$ on $V_+^\infty$. By definition, $J^\infty_+$ will also be $\Omega_\infty$-compatible.
	
	We have
	\begin{align*}
		\int_{C_k} \psi_k^\ast (\w+ \dd(\phi_\infty (t)\lambda)) &= \int_{C_k} \psi_k^\ast (\w+ \dd(\phi_k (t)\lambda))  + \int_{C_k} \psi_k^\ast \dd((\phi_\infty (t)-\phi_k(t))\lambda)\\
		& \leq \mathcal{E}(u_k) + \int_{ C_k}  \psi_k^\ast [(\phi'_\infty(t)-\phi'_k(t) )\dd{t} \wedge \lambda].
	\end{align*}
	By our choice of the sequence $\phi_k$, we have $\phi'_\infty(t) \leq \phi'_k(t)$ for all $t$. Moreover, $(\lambda,\w)$-compatibility of $\tilde{J}^k$ implies $\psi^\ast_k(\dd{t}\wedge\lambda)$ evaluates non-negatively pointwise on $C_k$. Hence,
	\begin{equation}\label{eq:cylenergybound}
		\int_{C_k} \psi_k^\ast \Omega_\infty < L.
	\end{equation}
	
	Because of the energy bound \eqref{eq:cylenergybound}, we can apply an exhaustive version of Gromov--Taubes compactness for holomorphic currents, as given in \cite[Prop.\!\ 3.8]{Tau}. This implies that there is a sequence $k_n \to \infty$ and a proper $J^\infty_+$-holomorphic map
	\[\psi_\infty: C_\infty \to V_+^\infty\]
	for some Riemann surface $C_\infty$ (possibly of infinite type) such that:
	
	\begin{enumerate}
		\item The $\psi_{k_n}$ converge weakly as currents to $\psi_\infty$. I.e., for any 2-form $\sigma$ with compact support on $V_+^\infty$,
		\[\lim_{n\to\infty} \int_{C_{k_n}} \psi_{k_n}^\ast \sigma =\int_{C_\infty} \psi_\infty^\ast \sigma. \]
		\item The curves converge on compact sets as point sets in the sense of Hausdorff. I.e., for any compact set $K\subset V_+^\infty$,
		\[\lim_{n\to\infty}\Big[ \sup_{x \in \psi_{k_n}(C_{k_n})\cap K}\mathrm{dist}(x, \psi_\infty(C_\infty)) + \sup_{y \in \psi_\infty (C_\infty)\cap K} \mathrm{dist}(y, \psi_{k_n}(C_{k_n}))\Big] = 0.\] 
	\end{enumerate}
	
	Convergence as currents plus \eqref{eq:cylenergybound} implies
	\[\int_{C_\infty} \psi_\infty^\ast \Omega_\infty< L.\]
	Recall that each $\psi_k$ met both boundary components of $V_+^k$. From point set convergence, it follows that $\psi_\infty(C_\infty)$ cannot  be contained in a compact subset of $V_+^\infty$. In particular, if $\pi : V_+^\infty \to  [\frac{\ep}{3}, \infty)$ is the projection to the $t$ coordinate, then $\pi \circ \psi_\infty$ is surjective.
	
	We claim that $\psi_\infty$ must be $C^0$-asymptotic to trivial cylinders over a collection of Hamiltonian orbits of the SHS $(\lambda,\w)$ as $t\to\infty$. The preceding paragraph implies this collection is non-empty. On the other hand, $(\lambda,\w)$ has no closed orbits, giving a contradiction and completing the proof.
	
	 To see our claim that $\psi_\infty$ asymptotes to cylinders over orbits, we follow some arguments of Hutchings \cite[\S 9] {Hutindineq}. For $N>0$, let $\tau_N: V_+^\infty \to V_+^\infty$ denote the forward translation in the $t$-direction by $N$. Define $C_\infty^N = \psi_\infty^{-1}(\im(\tau_N))$ and let 
	 \[\psi^N_\infty = \tau_{N}^{-1} \circ \psi_\infty|_{C^N_\infty}:C_\infty^N\to V_+^\infty,\]
	 which makes sense because $\tau_N$ is invertible on its image. 
	 We have 
	 \begin{align*}
	 	\int_{C_\infty^N} (\psi_\infty^N)^\ast \Omega_\infty  &= \int_{C_\infty^N} \psi^\ast_\infty(\w + \dd(\phi_\infty(t+N)\lambda))\\
	 	&\leq  \int_{C_\infty} \psi_\infty^\ast \Omega_\infty  + \int_{C_\infty} \psi_\infty^\ast[( \phi'_\infty(t+N)-\phi'_\infty(t)) \dd{t}\wedge\lambda].
	 	\intertext{Because $\psi_\infty^\ast (\dd{t}\wedge\lambda)$ is pointwise non-negative,} 
	 	&< L + \max_{t\geq \frac{\ep}{3}}| \phi'_\infty(t+N)-\phi'_\infty(t)| \int_{C_\infty} \psi_\infty ^\ast (\dd{t}\wedge \lambda)\\
	 	&\leq  L + \int_{C_\infty} \psi_\infty ^\ast (\dd{t}\wedge \lambda).
	 \end{align*}
	 Hence the sequence of curves $\psi_\infty^N$ has bounded energy. Again by Gromov--Taubes compactness, we can pass to a subsequence $(N_j)_{j\in\N}$ so that they converge as a current and a point set to some curve $\eta : C \to V_+^\infty$. We claim that $\eta$ is a collection of trivial cylinders over orbits; convergence as a point set then implies $\psi_\infty$ must be asymptotic to cylinders over orbits.
	 
	 Given any compact set $K \subset V_+^\infty$, we may consider the translates $\tau_{N_k} K$ which are all disjoint on some subsequence $(N_k)_{k\in\N}\subset (N_j)_{j\in\N}$. Then one has 
	 \[L>\int_{C_\infty}\psi_\infty^\ast\w \geq \sum_{k=1}^\infty \int_{C_\infty \cap \psi_\infty^{-1}(\tau_{N_k} K)}\psi_\infty^\ast\w = \sum_{k=1}^\infty \int_{ \psi_\infty^{N_k}(C_\infty^{N_k})\cap K}\w. \]
	Along with current convergence, this implies
\[0 = \lim_{k\to\infty} \int_{\psi_\infty^{N_k}(C_\infty^{N_k})\cap K}\w= \int_{\eta(C)\cap K} \w= \int_{C\cap \eta^{-1}(K)}\eta^\ast \w.\]
	 Since this holds for any compact $K$, it must be that
	 \[\int_{C} \eta^\ast \w=0.\]
	 Now we conclude as in \cite[Prop.\!\ 9.1]{Hutindineq}; for any tangent vector $v \in TV_+^\infty$, $\w(v,Jv)\geq 0$ with equality if and only if $v$ is in the span of $\pd_t$ and $R$. Thus, the image of $\eta$ must be of the form $\R_t \times \Lambda$, for some compact invariant set $\Lambda$ of the flow. Because the energy $\int_C \eta^\ast \Omega_\infty$ is finite, $\Lambda$ must be a collection of closed orbits. 
\end{proof}
	
\begin{remark}
	In \cite[\S 5.5]{Ush}, Usher also constructs symplectic 4-manifolds that contain certain hypersurfaces which are non-trivial torus bundles over the circle (for example by Gompf sum along symplectic tori with non-trivial normal bundles). These Hamiltonian hypersurfaces are no longer stable, but still fail the nearby existence property. We expect our results to carry over in the presence of these hypersurfaces after replacing SFT neck-stretching with a suitable version of the ``feral curve" adiabatic neck-stretching techniques from \cite{FH, Pra}, but there are some complications which we do not claim to address here.
\end{remark}

\begin{remark}
	It is much easier to show ``elementary capacities" are infinite in higher dimensions due to the mechanics of the Fredholm index. Consider the capacities $c_{k,\infty}(X,\w)$ which are introduced by Hutchings \cite{Hut26} in any dimension. To define these, one first defines for $g\geq 0$ the capacities $c_{k,g}(X,\w)$ which have the same definition as the alternative ECH capacities but require that the holomorphic curves measured by the capacities have total genus at most $g$. Then we take $c_{k,\infty}(X,\w)$  as the limit of the decreasing sequence $c_{k,g}(X,\w)$ as $g \to \infty$.  
	
	Let $M$ be a closed oriented manifold of dimension six or greater admitting a metric $g$ with negative sectional curvature, and let $D^\ast_g M$ be the associated unit cotangent disk bundle. As observed by Eliashberg--Givental--Hofer \cite[Cor.\!\! 1.7.4]{EGH}, the Fredholm index of any holomorphic curve in $(D^\ast_g M,\w_\can)$ is at most zero. Hence for generic $J$, there cannot be a somewhere injective curve through a generic point of $D^\ast_g M$. One concludes $c_{1,\infty}(D^\ast_g M,\w_\can)$ is infinite. The same considerations show $c_{1,\infty}(X,\w)$ is infinite for any closed 6-manifold which is symplectic Calabi--Yau (i.e. $c_1(TX,\w)=0$) or any higher-dimensional closed symplectic manifold which is both symplectic Calabi--Yau and symplectically aspherical (i.e. $\w(\pi_2(M))=0$).
\end{remark}

\subsection{Elementary tame capacities and K\"ahler surfaces}
One can define a version of the alternative ECH capacities using almost the same construction, but allowing the almost-complex structures to be merely $\w$-tame rather than $\w$-compatible. 

Let $(X,\w)$ be an admissible symplectic 4-manifold in the same sense as the definition of the alternative capacities. Let $\mathcal{J}^\mathrm{tame}(X,\w)$ denote the space of almost-complex structures on the symplectic completion $(\hat{X},\hat{\w})$ which are $\hat{\w}$-tame everywhere and $\lambda$-compatible on any symplectization ends. Now we define the \emph{elementary tame capacities}
\[0= c_0^\mathrm{tame}(X,\w) < c_1^\mathrm{tame}(X,\w) \leq c_2^\mathrm{tame}(X,\w) \leq \cdots \leq \infty\]
by setting
\begin{equation}\label{eq:tamedef}
	c_k^{\mathrm{tame}}(X,\w) = \sup_{\substack{{J\in \mathcal{J}^\mathrm{tame}(X,\w)}\\ {x_1,\ldots, x_k \in X \,\, \mathrm{distinct}}}}  \inf_{u\in \mathcal{M}^J(X,\w;\, x_1,\ldots, x_k)} \mathcal{E}(u).
\end{equation}
As usual, we extend this to an arbitrary symplectic 4-manifold $(X,\w)$ by declaring $c_k^\mathrm{tame}(X,\w)$ to be the supremum of $c_k^\mathrm{tame}$ of any admissible symplectic 4-manifold which embeds into $(X,\w)$.

Since $ \mathcal{J}(X,\w)\subset \mathcal{J}^\mathrm{tame}(X,\w)$, it is immediate that
\[c_k^\Alt(X,\w) \leq c_k^{\mathrm{tame}}(X,\w).\]
The tame capacities satisfy all the properties of the alternative ECH capacities from \Cref{prop:altproperties}, except possibly the ECH bound. The proofs of most of these properties are clear, following the arguments of \cite{HutAlt}. The ECH index property holds after noting that Hutchings' proof of the ECH index inequality $\ind(u) \leq I([u])$ is valid for $\w$-tame almost-complex structures (see \cite{Hutind}). The Seiberg--Witten bound uses that Gromov--Taubes invariants are defined in the generality of tame almost-complex structures 

Note that ECH has not been developed for tame almost-complex structures, which is why it is unknown (although likely) that one has an inequality $c_k^\tame(X,\w)\leq c_k^\ECH(X,\w)$ for a Liouville domain $(X,\w)$. 

It follows from the proofs in \cite[Thm.\!\! 9 and Thm.\!\! 15]{HutAlt}, which rely only on the properties of \Cref{prop:altproperties}, that
\[c_k^\tame(X,\w) = c_k^\Alt(X,\w) = c_k^\ECH(X,\w)\]
for $(X,\w)$ any convex or concave toric domain. For the same reason, the tame capacities will also satisfy the conclusions of \Cref{thm:alt_fin}.

Our motivation for introducing these capacities is that one can show they are infinite in relatively large generality using some work of Lee--Parker \cite{LP}, and one might venture that the same is true for the alternative ECH capacities.

\begin{proposition}\label{prop:tame}
	Let $(X,\w)$ be a closed K\"ahler surface with $b_2^+(X)>1$. Then the capacities $c_k^\tame(X,\w)$ are infinite.
\end{proposition}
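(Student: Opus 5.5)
The plan is to use the Lee--Parker construction of tame almost-complex structures from holomorphic 2-forms. For a Kähler surface with $b_2^+>1$ we have $p_g = h^{2,0}\geq 1$, so there is a nonzero holomorphic 2-form $\alpha$. Lee--Parker \cite{LP} show that
\[J_\alpha = (\mathrm{Id} - K_\alpha J)^{-1} J (\mathrm{Id}- K_\alpha J),\]
where $K_\alpha$ is defined by $\langle u, K_\alpha v\rangle = \mathrm{Re}\,\alpha(u,v)$, is an $\w$-tame almost-complex structure. It is typically not $\w$-compatible, which is why we work with $c_k^\tame$. They also show that every $J_\alpha$-holomorphic curve $C$ is $J$-holomorphic and satisfies $\alpha|_C \equiv 0$, i.e.\ $C$ lies inside the canonical divisor $Z(\alpha)$ (possibly with multiplicities). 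Since $Z(\alpha)$ is a complex curve, hence a proper subvariety of real codimension two, the whole moduli space $\mathcal{M}^{J_\alpha}(X,\w;x)$ is empty for any point $x\notin Z(\alpha)$.

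With this in hand, the proof is short. By conformality and because closed $X$ is itself admissible, $c_k^\tame(X,\w)\geq c_1^\tame(X,\w)$ is bounded below by the sup–inf in \eqref{eq:tamedef} with $X$ itself. Choose $J = J_\alpha\in\mathcal{J}^\tame(X,\w)$ and distinct points $x_1,\ldots,x_k\in X\setminus Z(\alpha)$, which is possible since $Z(\alpha)$ has empty interior. Then $\mathcal{M}^{J_\alpha}(X,\w;x_1,\ldots,x_k)=\emptyset$, so the infimum over the empty set is $+\infty$, and hence $c_k^\tame(X,\w)=\infty$ for every $k\geq1$.

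The main obstacle is checking that the Lee--Parker statement applies in exactly the form needed. First, $J_\alpha$ must be $\w$-tame for the given Kähler form $\w$, not merely after rescaling $\alpha$. I would first verify the formula $\w(v,J_\alpha v)>0$ directly from the definition; if this fails, I would replace $\alpha$ by $s\alpha$ for a small $s>0$, which does not change $Z(\alpha)$. Second, the vanishing argument must hold for all $J_\alpha$-holomorphic maps, including multiply covered and disconnected ones, not just somewhere-injective curves; I would check this. Lee--Parker's argument is pointwise on the image: they show $\alpha$ restricts to zero on any $J_\alpha$-complex line in the image, which forces the image into $Z(\alpha)$. This pointwise argument should cover all cases, because each component of a curve in $\mathcal{M}^{J_\alpha}$ is nonconstant and its image is contained in $Z(\alpha)$.
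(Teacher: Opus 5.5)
There is a genuine gap at the central step. It is not true that every $J_\alpha$-holomorphic curve is $J$-holomorphic and lies in $Z(\alpha)$, so $\mathcal{M}^{J_\alpha}(X,\omega;x)$ need not be empty for $x\notin Z(\alpha)$. Lee--Parker's Lemma 3.2 only covers curves whose class $A$ satisfies $\int_A\alpha=0$ (e.g.\ $(1,1)$-classes).

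The pointwise computation actually goes the other way from what you describe. Write $J_\alpha=\frac{1-|\alpha|^2}{1+|\alpha|^2}J-\frac{2}{1+|\alpha|^2}K_\alpha$, where $K_\alpha^2=-|\alpha|^2$ and $K_\alpha J$ is skew-adjoint. Then $\alpha(v,J_\alpha v)=\frac{2|\alpha|^2}{1+|\alpha|^2}|v|^2$. Hence $u^\ast\alpha\geq 0$ pointwise on any $J_\alpha$-curve, and it is strictly positive wherever $du\neq 0$ and $u\notin D_\alpha=Z(\alpha)$. So a curve through $x\notin D_\alpha$ is only forced to have $\alpha([u])>0$, which does not prevent it from existing.

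Concretely, suppose the Kähler metric is hyperkähler with triple $(I,J',K')$, and take $\alpha=s\,\omega_{J'}$. Then $K_\alpha=-sJ'$, and $J_\alpha=\frac{1-s^2}{1+s^2}I+\frac{2s}{1+s^2}J'$ is an integrable twistor rotation of $I$. On a flat $T^4$, let $I$ be a small twistor rotation of the product structure $E\times E$. For a suitable choice of $J'$ and $|s|<1$, $J_\alpha$ is exactly the product structure. This $J_\alpha$ is $\omega$-tame, there is a $J_\alpha$-holomorphic elliptic curve through every point, and yet $D_\alpha=\emptyset$. So your one-line conclusion fails.

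The missing idea is that you cannot exhibit a single tame $J$ with empty moduli space this way. Instead, you must use the supremum over $J$ together with compactness, which is what the paper does:
\begin{enumerate}
\item Assume $c_1^\tame(X,\omega)=L<\infty$, fix $x\notin D_\alpha$, and take $J_k=J_{\alpha/k}$.
\item This gives curves $u_k$ through $x$ with $\mathcal{E}(u_k)\le L$, and by the positivity above, $\alpha([u_k])>0$.
\item Since $J_k\to J$ in $C^\infty$, Gromov--Taubes compactness gives, after passing to a subsequence, a $J$-holomorphic limit current $\mathcal{C}$ with $[\mathcal{C}]=[u_k]$.
\item Because $J$ is the integrable Kähler structure and $\alpha$ has type $(2,0)+(0,2)$, $\alpha([\mathcal{C}])=0$, a contradiction. (The paper also arranges $[\alpha]$ rational to get a uniform bound $\alpha([u_k])\ge N^{-1}$.)
\end{enumerate}
What is true is that for each energy level $L$ and all large $k$, $J_{\alpha/k}$ has no curve through $x$ of energy at most $L$. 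That is exactly what the sup--inf definition of $c_k^\tame$ requires.
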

\begin{proof}
	Let $(X,\w)$ be a K\"ahler surface with associated integrable complex structure $J$ and K\"ahler metric $g$. Suppose $\alpha \in H^{2,0}(X)$ is a holomorphic 2-form, which we can identify with an element of $\Re(H^{2,0}\oplus H^{0,2})$. By the Hodge index theorem, such a non-zero $\alpha$ exists if and only if $b_2^+(X)>1$. 
	
	Following Lee--Parker \cite{LP}, we define a map $K_\alpha: TX\to TX$ by
	\[g(u, K_\alpha v) = \alpha(u,v).\]
	We then introduce an almost-complex structure $J_\alpha$ given as
	\begin{equation}\label{eq:Jalpha}
		J_\alpha = (I+JK_\alpha)^{-1} J (I+JK_\alpha).
	\end{equation}
	By some algebra performed in \cite[Prop.\!\! 1.5]{Lee}, we have that
	\[g(J_\alpha u,J_\alpha v) = g(u,v) \qand J_\alpha = \frac{1-|\alpha|^2}{1+|\alpha|^2} J - \frac{2}{1+|\alpha|^2}K_\alpha.\]
	Then 
	\begin{align*}
		\w(u, J_\alpha u)&= g(Ju, J_\alpha u) \\
		& = \frac{1-|\alpha|^2}{1+|\alpha|^2}g(Ju, Ju) -\frac{2}{1+|\alpha|^2}g(Ju, K_\alpha u)\\
		&=  \frac{1-|\alpha|^2}{1+|\alpha|^2}g(u,u) +\frac{2}{1+|\alpha|^2}\alpha(u,Ju).
	\end{align*}
	The latter term always vanishes because $\alpha(u,Ju)=g(u,JK_\alpha u)$ and $JK_\alpha$ is skew-adjoint. We obtain 
	\[\w(u, J_\alpha u)=  \frac{1-|\alpha|^2}{1+|\alpha|^2} \|u\|^2_g.\] 
	One concludes that $J_\alpha$ is $\w$-tame if and only if $|\alpha|<1$ everywhere; this can be achieved by rescaling $\alpha$.  It is not needed for the subsequent argument, but we note 
	\begin{align*}
		\w(J_\alpha u, J_\alpha v) &=  \frac{1-|\alpha|^2}{1+|\alpha|^2}g(J_\alpha u,v) -\frac{2}{1+|\alpha|^2}\alpha(JJ_\alpha u,  v)\\
		&= \qty(\frac{1-|\alpha|^2}{1+|\alpha|^2})^2g(J u,v)-\frac{2-2|\alpha|^2}{(1+|\alpha|^2)^2}\Big(g(K_\alpha u,v)-\alpha(u,v)\Big)\\&\hspace{2cm}  +\qty(\frac{2}{1+|\alpha|^2})^2\alpha(JK_\alpha u,  v)\\
		&= \left( 1-\frac{8|\alpha|^2}{(1+|\alpha|^2)^2} \right)  \w(u,v)+\frac{4-4|\alpha|^2}{(1+|\alpha|^2)^2}\alpha(u,v) .
	\end{align*}
	In particular, $J_\alpha$ is $\w$-compatible if and only if $\alpha=0$, in which case $J_\alpha=J$.
	
	If $\mathrm{PD}(A)\in H^{1,1}(X)$ is non-zero, then Lee--Parker \cite[Lem. 3.2]{LP} show that any connected $J_\alpha$-holomorphic curve representing the class $A$ must be contained in the divisor $D_\alpha$ given as the zero set of the holomorphic form associated to $\alpha$. In fact their proof implies something stronger. Namely, if $u: \Sigma \to X$ is a connected non-constant $J_\alpha$-holomorphic curve so that
	\begin{equation}\label{eq:LPcond}
		\int_\Sigma u^\ast \alpha =0, \qq{then} u(\Sigma) \subset D_\alpha.
	\end{equation}
	
	Now suppose for the sake of eventual contradiction that $c_1^\tame(X,\w)= L<\infty$. Pick a non-zero holomorphic 2-form $\alpha$; by deforming and rescaling, we may suppose $|\alpha|^2<1$ everywhere and that $[\alpha]$ defines a rational class in the cohomology of $X$. Pick a point $x \in X$ which is not in the divisor $D_\alpha$. Consider the sequence of 2-forms $\alpha_k = \frac{1}{k} \alpha$ for $k\in\N$ and the associated almost-complex structures $J_k=J_{\alpha_k}$. Since each $J_k$ is $\w$-tame, by definition \eqref{eq:tamedef}, 
	\[L=c_1^\tame(X,\w) \geq \inf \{\mathcal{E}(u) : u \in \mathcal{M}^{J_{k}}(X,\w; x)\} \qq{for} k\in\N.\]
	In particular, we can find a sequence of $J_{k}$-holomorphic curves $u_k: \Sigma_k \to X$ which each pass through $x$ and have uniformly bounded energy:
	\begin{equation}\label{eq:energybound}
		\int_{\Sigma_k} u_k^\ast\w \leq L.
	\end{equation}
	As in \cite[Rem.\!\! 2]{HutAlt}, we may as well assume that $u_k$ is connected, irreducible, and somewhere-injective. Let $A_k \in H_2(X;\Z)$ denote the homology class $[u_k]$ of the $k$th curve. Since $x$ does not lie in the divisor $D_\alpha$, we conclude from \eqref{eq:LPcond} that 
	\[\alpha(A_k) = k\int_{\Sigma_k} u_k^\ast \alpha_k \neq 0.\]
	In fact, the proof of \cite[Lem. 3.2]{LP} also tells us 
	\[\alpha(A_k) = \int_{\Sigma_k} |\overline{\pd}_Ju|^2,\]
	hence $\alpha(A_k)>0$ for all $k$. Note $\alpha$ and $A_k$ belong to rational cohomology and integral homology respectively. Let $N$ be the least common denominator of $\alpha$, so that $N\alpha$ lifts to $H^2(X;\Z)$. Then $N\alpha(A_k)$ is a positive integer, in particular at least one. We conclude that $\alpha(A_k) >N^{-1}$ for all $k$.
	
Note that $K_{\alpha_k} = \frac{1}{k} K_\alpha$, and so the sequence $K_{\alpha_k}$ will $C^\infty$-converge to the zero endomorphism. And hence by \eqref{eq:Jalpha}, the sequence of almost-complex structures $J_k$ will $C^\infty$-converge to the integrable complex structure $J$. Since we have a uniform energy bound \eqref{eq:energybound}, Gromov--Taubes compactness applies to say the sequence $u_k$ has a subsequence which converges weakly as a current to a $J$-holomorphic current $\mathcal{C} \subset  X$. 

 Current convergence implies we can pass to a subsequence so that $[u_k]=[\mathcal{C}]$ for all $k$. The adjunction formula then gives a uniform upper bound on the genus of $u_k$, and, since each $u_k$ is connected, we conclude $\mathcal{C}$ is the image of a connected finite genus $J$-holomorphic curve.

Since $J$ is integrable, $\mathcal{C}$ is essentially an effective divisor in the K\"ahler surface $(X,\w)$ and so it must be Poincar\'e dual to a $(1,1)$-class and pair trivially with $\alpha$. On the other hand, $\alpha([u_k]) > N^{-1}$ for each $k$ and so $\alpha([\mathcal{C}])>0$. We thus have a contradiction. 	
\end{proof}
\begin{remark}
	The above argument does not apply to the capacities $c_k^{\Alt}$ simply because, as demonstrated in the course of the proof, these almost-complex structures $J_\alpha$ are never $\w$-compatible when $\alpha\neq 0$.
\end{remark}

\subsection{Gromov--Taubes invariants and finite alternative capacities}
In this section we prove \Cref{thm:alt_fin}. We briefly recall the Gromov--Taubes invariant, which will be used in the proof. 

Let $(X,\w)$ be a closed symplectic 4-manifold. Taubes \cite{TauSWGr} introduced a function
\[\mathrm{Gr}(X,\w;\cdot,\cdot) : H_2(X;\Z)\otimes\Lambda^{\mathrm{even}} (H_1(X;\Z)/\mathrm{torsion}) \to \Z.\]
Given $A\in H_2(X;\Z)$ and a collection of classes $\gamma_1,\ldots, \gamma_{2k} \in H_1(X;\Z)/\mathrm{torsion}$, the Gromov--Taubes invariant $\mathrm{Gr}(X,\w;A, \gamma_1,\ldots, \gamma_{2k})$ is defined as follows. We pick some generic $\w$-tame almost-complex structure $J$ and a generic collection of 1-cycles $\Gamma$ representing the classes $\gamma_1,\ldots, \gamma_{2k}$. Setting
\[I(A) = A^2 +\langle c_1(TX,\w),A\rangle ,\]
suppose $I(A)\geq 2k$. Then we pick a set $\mathbf{p}$ of $\frac{1}{2}I(A)-k$ generic points. The Gromov--Taubes invariant is equal to a weighted sum over collections of embedded* $J$-holomorphic curves which pass through the cycles $\Gamma$, the  points $\mathbf{p}$, and have total homology class $A$. We also need to include multiply covered $J$-holomorphic tori with zero self-intersection and weight them in a fairly subtle way (see e.g.\!\! \cite{McD}). If $I(A) <2k$, we define the Gromov--Taubes invariant to be zero. 

In parallel, for a closed 4-manifold $X$ with $b_2^+\geq 2$, there are the Seiberg--Witten invariants
\[SW(X, \cdot, \cdot): \mathrm{Spin}{}^c(X)\otimes \Lambda^\ast (H_1(X;\Z)/\mathrm{torsion}) \to \Z,\]
defined by integrating suitable cohomology classes over the moduli space of solutions to the Seiberg--Witten equations. When $b_2^+=1$, there are two such invariants $SW_\pm$ distinguished by a choice of chamber for the perturbation term in the Seiberg--Witten equations. Recall that $\mathrm{Spin}{}^c(X)$ is an affine space modelled on $H^2(X)$ and a symplectic form $\w$ determines a canonical spin$^{c}$-structure $\mathfrak{s}_\w$. For a symplectic 4-manifold $(X,\w)$ with $b_2^+>1$, Taubes \cite{TauSWGr} famously proves an identification
\[\mathrm{Gr}(X,\w; A, \gamma_1,\ldots, \gamma_{2k}) = SW(X,\mathfrak{s}_\w +\mathrm{PD}(A), \gamma_1,\ldots, \gamma_{2k}).\]
When $b_2^+(X)=1$, the symplectic form $\w$ specifies a chamber for the Seiberg--Witten invariants\footnote{The opposite orientation convention is taken in some places, e.g. \cite{LL}} and Taubes' identification extends to give
\[\mathrm{Gr}(X,\w; A, \gamma_1,\ldots, \gamma_{2k}) = SW_+(X,\mathfrak{s}_\w +\mathrm{PD}(A), \gamma_1,\ldots, \gamma_{2k})\]
provided the pairing $A\cdot S \geq -1$ for any embedded symplectic sphere $S$ with $S^2=-1$. When $A$ fails this condition, McDuff \cite{McD} constructs a modified invariant $\mathrm{Gr}'$ and Li--Liu \cite{LL} prove this agrees with $SW_+$ in the remaining cases.

The following slight enhancement of \cite[Lem.\!\ 18]{HutAlt} suffices for our purposes.
\begin{lemma}\label{lem:GrAlt}
	Suppose $\mathrm{Gr}(X,\w;A, \gamma_1,\ldots, \gamma_{2d})\neq 0$ or $\mathrm{Gr}'(X,\w;A, \gamma_1,\ldots, \gamma_{2d})\neq 0$  with  $2k\leq I(A)-2d$. Then
	\[c_k^\Alt(X,\w) \leq \w(A). \]
\end{lemma}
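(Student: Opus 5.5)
We argue from the definition of $c_k^\Alt$, following the proof of \cite[Lem.\!\ 18]{HutAlt}. Since $(X,\w)$ is closed it is admissible, so it suffices to fix an arbitrary $\w$-compatible $J$ and distinct points $x_1,\ldots,x_k\in X$, and to produce some $u\in\mathcal{M}^J(X,\w;x_1,\ldots,x_k)$ with $\mathcal{E}(u)\leq \w(A)$. Because $\mathcal{E}(u)=\langle[\w],[u]\rangle$ for closed curves, a $J$-holomorphic curve (or current) in class $A$ through the $x_i$ suffices. Components of the current that are constant or repeated must be discarded or reduced to their underlying simple curves. This only removes energy, since every non-constant component has positive energy, so the curve obtained still passes through all the points and has energy at most $\w(A)$.

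First, treat generic data. Choose $x_1,\ldots,x_k$ together with $\frac12 I(A)-d-k\geq 0$ further generic points, and generic 1-cycles $\Gamma$ representing $\gamma_1,\ldots,\gamma_{2d}$. The hypothesis $2k\leq I(A)-2d$ is exactly what guarantees that $\frac12 I(A)-d$ points can be imposed with $x_1,\ldots,x_k$ among them. Gromov--Taubes invariants may be computed using any generic tame $J$, and compatible structures suffice for genericity. So for generic $J'$ near $J$ and generic perturbations $x_i'$ of the points, the nonvanishing of $\mathrm{Gr}$ (respectively $\mathrm{Gr}'$) gives a nonempty count. Hence there is a $J'$-holomorphic current of class $A$ through all the constraints, in particular through the $x_i'$, with energy $\w(A)$.

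For $\mathrm{Gr}'$, McDuff's modified invariant is defined by counting curves in the class $A$ (after accounting for $-1$-spheres $E$ with $A\cdot E<-1$), possibly together with multiple covers of exceptional spheres. If its definition instead yields curves in a class $A'=A-mE$, then $\w(A')<\w(A)$ and the bound only improves. Either way, I will record that a nonzero $\mathrm{Gr}'$ produces, for generic $J'$, a current through the points with energy at most $\w(A)$.

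The main step is removing genericity. Take sequences $J_n\to J$ of generic compatible structures and generic point tuples $x^n_i\to x_i$. For each $n$ we obtain currents $\mathcal{C}_n$ through the $x_i^n$ with energy at most $\w(A)$. Gromov--Taubes compactness \cite{Tau} then gives a subsequence converging as currents and as point sets to a $J$-holomorphic current $\mathcal{C}$ whose energy is at most $\w(A)$ and whose support contains every $x_i$, by the Hausdorff convergence. Passing to its underlying simple curves gives the required element of $\mathcal{M}^J(X,\w;x_1,\ldots,x_k)$.

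The expected obstacle is the bookkeeping in the $\mathrm{Gr}'$ case and in checking that nonvanishing forces actual curves for generic compatible (not merely tame) $J$. I would handle both by citing McDuff's construction \cite{McD} and the fact that generic compatible $J$ are generic among tame ones.
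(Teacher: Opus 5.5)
Your proposal is correct and follows the paper's proof: nonvanishing of $\mathrm{Gr}$ or $\mathrm{Gr}'$ gives, for generic $\w$-compatible $J$ and generic points, a curve of energy at most $\w(A)$ through those points, and a limiting argument then handles arbitrary $J$ and arbitrary points. The only minor difference is in the compactness step: you apply Gromov--Taubes compactness for currents directly, whereas the paper first bounds the genus via the adjunction formula and then uses Gromov compactness. Either route works.
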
 
\begin{proof}
	By definition of the Gromov--Taubes invariants or McDuff's modification, non-vanishing of the appropriate invariant implies for any generic $\w$-compatible $J$ and generic collection of $k$ points, there is a possibly disconnected $J$-holomorphic curve $u$ through those points representing the homology class $A$. The energy of this curve is then $\mathcal{E}(u)=\w(A)$. By the adjunction formula, there is an upper bound on the genus of this curve. By Gromov compactness, we can then find a curve of energy at most $\w(A)$ for arbitrary $J$ and an arbitrary collection of $k$ points by taking a sequence of generic $J$s and generic points converging to them and studying the associated sequence of curves.
\end{proof}

\begin{proof}[Proof of \Cref{thm:alt_fin}]
By Buse--Hind--Opshtein's proof of packing stability \cite{BHO}, for a sufficiently small $\lambda$ and some $n\in\N$, there is a symplectic embedding
\[\coprod_{i=1}^n \mathrm{int}(B^4(\lambda)) \hookrightarrow (X,\w) \qq{with} \vol(X,\w) = n\cdot \vol(B^4(\lambda)).\]
Since any collection of $n$ closed balls with radii less than $\lambda$ embed into $(X,\w)$, continuity and monotonicity of the alternative capacities implies
\[c_k^\Alt(X,\w) \geq c_k^\Alt\left(\coprod_{i=1}^n B^4(\lambda) \right).\]
Because the alternative capacities of a union of balls agree with its ECH capacities, the ECH Weyl law yields
\[\lim_{k\to \infty} \frac{c_k^\Alt(X,\w)^2}{4k} \geq \lim_{k\to \infty} \frac{c_k^\ECH(\coprod_{i=1}^n B^4(\lambda))^2}{4k} =n\cdot \vol(B^4(\lambda))= \vol(X,\w).\]
If we consider the the error terms $e_k^\Alt(X,\w) := c_k^\Alt(X,\w) - \sqrt{4\vol(X,\w)k}$, monotonicity and equality of volumes implies
\[e_k^\Alt(X,\w) \geq e_k^\Alt\left(\coprod_{i=1}^n B^4(\lambda)\right)= e_k^\ECH\left(\coprod_{i=1}^n B^4(\lambda)\right).\]
The error terms $e_k^\ECH=c_k^\ECH-\sqrt{4k\vol}$ for a union of balls are uniformly bounded in $k$, as shown in \cite[\S10]{EdtPack}. Thus the error terms $e_k^\Alt(X,\w)$ have a uniform lower bound independent of $k$.

It remains to find a $k$-independent upper bound for the error terms $e_k^\Alt$. We will do so by bounding the alternative capacities of $(X,\w)$ in terms of some Gromov--Taubes invariants, following Usher \cite{Ush} (see also the work of Prasad \cite[Prop.\! A.5]{Pra} and Li--Ning \cite[Prop.\!\ 4.16]{LNalg} for some similar computations). Assume for the moment that $(X,\w)$ is rational. By rescaling, we may assume that $(X,\w)$ is integral, and moreover that 
\begin{equation}\label{eq:posdimcond}
[\w]\cdot[\w] \geq 4+ b_1(X)- [\w]\cdot c_1(TX,\w).	
\end{equation}
Pick some basis $\gamma_1,\ldots , \gamma_{b_1}$ for $H_1(X;\Z)/\mathrm{torsion}$.
As Usher \cite[Appendix]{Ush} computes, under the assumption \eqref{eq:posdimcond},  the wall-crossing and charge conjugation properties of the Seiberg--Witten invariants imply 
\[SW_+(X,\mathrm{PD}[\w], \gamma_1,\ldots,\gamma_{b_1})=\pm 1.\]
By an identical argument, if $n$ is any positive integer, then
\[SW_+(X,\mathrm{PD}[n\w], \gamma_1,\ldots,\gamma_{b_1})=\pm 1.\]
Certainly the class $\mathrm{PD}[n\w]$ pairs positively with any $\w$-symplectic sphere. Hence Taubes' equivalence $SW=\mathrm{Gr}$ applies and we obtain
\begin{equation}\label{eq:Grnonzero}
	\mathrm{Gr}(X,\w; \mathrm{PD}[n\w], \gamma_1,\ldots,\gamma_{b_1})=\pm 1.
\end{equation}
Set
\[k_n = \frac{1}{2}I(\mathrm{PD}[n\w])-\frac{1}{2}b_1(X)=\frac{n^2}{2}[\w]\cdot[\w] +\frac{n}{2}[\w]\cdot c_1(TX,\w) -\frac{1}{2}b_1(X).\]
Let $\kappa = [\w]\cdot c_1(TX,\w) $. Note $2\vol(X,\w) =  [\w]\cdot[\w]$. Also abbreviate $b_1=b_1(X)$ and $\vol=\vol(X,\w)$. We may then rewrite
\[k_n= n^2 \vol +\frac{n}{2}\kappa -\frac{1}{2}b_1.\]
We may take $n$ sufficiently large so that $k_{n-1}<k_n$. From \Cref{lem:GrAlt} and \eqref{eq:Grnonzero}, we have that for $k_{n-1} < k \leq k_n$,
\[c_{k}^\Alt(X,\w) \leq \langle [\w], \mathrm{PD}[n\w]\rangle =2n\vol(X,\w).\]
Then, letting $n$ denote the smallest integer for which $k\leq k_n$,
\begin{align*}
	e_{k}^\Alt(X,\w) &\leq 2n\vol -\sqrt{4k\vol }\\
	&= \frac{4n^2 \vol^2 -4k \vol}{2n\vol +\sqrt{4k\vol }}\\
	&<  \frac{4n^2 \vol^2 -4k_{n-1}\vol}{2n\vol +\sqrt{4k\vol }}\\
	&= \frac{8n\vol^2- 4\vol^2 -2n\kappa\vol +2\kappa \vol +2b_1\vol }{2n\vol +\sqrt{4k\vol}} \\
	&\leq \max\left\{0,4\vol -\kappa +\frac{-2\vol  + \kappa +b_1}{n}\right\}\\
	&\leq \max\{0,b_1+4\vol  +|\kappa|\}.
\end{align*}
Hence the error term is bounded from above.

Now we wish to consider an arbitrary non-rational symplectic form $\w$. Consider an integral basis $\alpha_1,\ldots, \alpha_{b_2}$ for $H^2(X;\Z)/\mathrm{torsion}$ with respect to which the intersection form is either the matrix $\mathrm{diag}(1, -1,\cdots ,-1)$ or the matrix $H =\pmqty{0& 1\\1 &0}$. Such a basis exists by the classification of indefinite intersection forms and the fact $b_2^+=1$. Using this basis, we can define a norm on $H^2(X;\R)$:
\[\|v_1 \alpha_1 +\cdots + v_{b_2} \alpha_{b_2}\| = \sqrt{v_1^2 + \cdots + v_{b_2}^2}. \]
Note by Cauchy--Schwarz
\[| \alpha \cdot \beta|  \leq \|\alpha\|\cdot \|\beta\|.\]

In our integral basis, write 
\[[\w] = a_1\alpha_1+\cdots + a_{b_2}\alpha_{b_2}.\] 
for $a_i\in \R$. We can find a sequence of rational cohomology classes $A_n$ which are close to $[\w]$. For any positive integer $n$, there are integers $d_1,\ldots, d_n$ defining an integral cohomology class $B_n=d_1\alpha_1+\cdots + d_{b_2}\alpha_{b_2}$ so that
\[\text{for $1\leq i\leq b_2$,} \quad\quad |na_i - d_i| \leq \frac{1}{2}.  \]
Then $A_n = n^{-1}B_n$ are the classes nearby to $[\w]$. In particular,
\[\|B_n - n[\w]\| \leq \frac{1}{2}\sqrt{b_2} \qand \|A_n - [\w]\| \leq \frac{\sqrt{b_2}}{2n}.\]

Recall the space of cohomology classes realized by a symplectic form on $X$ is an open cone. In particular, we can restrict to $n$ large enough so that each cohomology class $A_n$ is represented by a rational symplectic form $\w_n$. Further, without loss of generality, by rescaling, we can suppose all the classes $B_n$ satisfy \eqref{eq:posdimcond}. Note here we use that the forms $\w_n$ are all homotopic and thus $c_1(TX,\w_n)$ is independent of $n$ and agrees with $c_1(TX,\w)$. Let $\Omega_n=n\w_n$ be the associated integral symplectic form representing the class $B_n$. We compute
\begin{equation}\label{eq:Cconst}
\begin{split}
\|[\Omega_{n+1}]-[\Omega_n]\| &\leq  \|B_{n+1}-(n+1)[\w]\|+\|B_n -n[\w]\| + \|[\w]\|\\
& \leq \sqrt{b_2} +\|[\w]\|
\end{split}
\end{equation}
and
\begin{align*}
	([\w]\cdot [\Omega_n])^2 &= \big([\w]\cdot (n[\w] + [\Omega_n]-n[\w])\big)\big(([\w]-n^{-1}[\Omega_n]+n^{-1}[\Omega_n])\cdot  [\Omega_n]\big)\\
	&= \big(n[\w]\cdot[\w]+ [\w]\cdot ([\Omega_n]-n[\w])\big)\big(n^{-1}[\Omega_n]\cdot[\Omega_n]+  (n[\w]-[\Omega_n])\cdot[\omega_n]\big).
\end{align*}
Hence,
\begin{align*}
	\big|([\w]\cdot [\Omega_n])^2 - ([\w]\cdot[\w])([\Omega_n]\cdot[\Omega_n])\big| &\leq n^{-1}|[\w]\cdot([\Omega_n]-n[\w])|\cdot |[\Omega_n]\cdot[\Omega_n]|\\
	&\hspace{1cm}+ n|[\w]\cdot[\w]|\cdot |(n[\w]-[\Omega_n])\cdot[\w_n]|\\
	&\hspace{1cm} + |[\w]\cdot([\Omega_n]-n[\w])|\cdot|(n[\w]-[\Omega_n])\cdot[\w_n]|\\
	&\leq \frac{n\sqrt{b_2}}{2}\big(\|[\w_n]\|^2\cdot \|[\w]\| + \|[\w_n]\|\cdot \|[\w]\|^2\big)\\
	&\hspace{1cm} + \frac{b_2}{4}\|[\w]\|\cdot \|[\w_n]\| \\
	&\leq Dn,
\end{align*}
where $D$ is some constant independent of $n$.

For $n$ large, $\w$ and $\w_n$ will determine the same chamber for the Seiberg--Witten invariants, and so as before we have
\[SW_+(X,\mathrm{PD}[\Omega_n],\gamma_1,\ldots,\gamma_{b_1})=\pm 1.\]
 The class $\Omega_n$ may not pair positively with all exceptional $\w$-symplectic spheres, nevertheless, we can apply Li--Liu's result \cite{LL} to obtain, for large $n$,
\[\mathrm{Gr}'(X,\w ; \mathrm{PD}[\Omega_n],\gamma_1,\ldots,\gamma_{b_1})=\pm 1. \]
If we write
\[j_n = \frac{1}{2}  [\Omega_n]\cdot [\Omega_n] + \frac{1}{2}  [\Omega_n]\cdot c_1(TX,\w)  -\frac{1}{2}b_1,\]
then for $j\leq j_n$ and $n$ sufficiently large, \Cref{lem:GrAlt} yields
\[c_j^\Alt(X,\w) \leq  [\w]\cdot [\Omega_n].\]
For $n$ sufficiently large, we have $j_{n}\leq j_{n+1}$. For $j$ sufficiently large, let $n$ denote the smallest integer for which $j \leq  j_{n+1}$. Then we compute
\begin{align*}
	e_j^\Alt(X,\w)&\leq  [\w]\cdot [\Omega_{n+1}]- \sqrt{4j\vol(X,\w)}\\
	&= \frac{( [\w]\cdot [\Omega_{n+1}])^2 - 4j\vol(X,\w)}{ [\w]\cdot [\Omega_{n+1}] + \sqrt{4j\vol(X,\w)}}.
	\intertext{Let $C= \|\w\|^2 + \sqrt{b_2}\|\w\|$. From \eqref{eq:Cconst} and the fact $j \geq  j_{n}$,}
	&\leq  \frac{([\w]\cdot [\Omega_{n}] + C)^2 - 4j_n\vol(X,\w)}{[\w]\cdot [\Omega_n] + \sqrt{4j\vol(X,\w)}}\\
	&=  \frac{([\w]\cdot [\Omega_n])^2 +2C[\w]\cdot[\Omega_n] + C^2}{[\w]\cdot [\Omega_n] + \sqrt{4j\vol(X,\w)}} - [\w]\cdot[\w]\frac{[\Omega_n]\cdot [\Omega_n] + [\Omega_n]\cdot c_1(TX,\w) -b_1}{[\w]\cdot [\Omega_n] + \sqrt{4j\vol(X,\w)}}\\
	&\leq \frac{Dn+2C[\w]\cdot[\Omega_n] + C^2 +[\w]\cdot[\w]\Big(b_1- [\Omega_n]\cdot c_1(TX,\w)\Big)}{[\w]\cdot [\Omega_n] + \sqrt{4j\vol(X,\w)}}\\
	&\leq \frac{1}{n[\w]\cdot[\w_n]}\max\left\{0, Dn+2Cn[\w]\cdot[\omega_n] + C^2+[\w]\cdot[\w]\left(b_1-[\Omega_n]\cdot c_1(TX)\right)\right\}\\
	&= \max\left\{0, 2C + \frac{D - ([\w]\cdot[\w]) ( [\w_n]\cdot c_1(TX) )}{[\w]\cdot [\w_n]} + \frac{C^2+b_1 [\w]\cdot[\w]}{n[\w]\cdot [\w_n]}\right\}.
\end{align*}
Since $\|[\w]-[\w_n]\| \leq \frac{\sqrt{b_2}}{2n}$, the final expression can be bounded independent of $n$. We conclude $e_j^\Alt(X,\w)$ has a $j$-independent upper bound.
\end{proof}

\begin{proof}[Proof of \Cref{cor:alt_fin}]
	Suppose $(X,\w)$ is a compact symplectic 4-manifold with smooth boundary which has a symplectic embedding $\varphi: (X,\w)\hookrightarrow (W,\Omega)$ into a closed symplectic 4-manifold with $b_2^+(W)=1$. 
	
	First, by Edtmair's recent breakthrough in packing stability \cite{EdtPack}, provided $X$ has smooth boundary, one can find a collection of small enough balls $\coprod_{i=1}^n B^4(a)$ with total volume equal to $\vol(X,\w)$ so that $\coprod_{i=1}^n B^4(b)$ symplectically embeds into $(X,\w)$ for any $b<a$. Using the same argument as at the beginning of the proof of \Cref{thm:alt_fin}, or in \cite[\S10]{EdtPack}, it follows that
	\[\lim_{k\to \infty} \frac{c_k^\Alt(X,\w)^2}{4k} \geq \vol(X,\w)\]
	and that the error terms $e_k^\Alt(X,\w)$ have a $k$-independent lower bound. 
	
	Now consider the symplectic 4-manifold $W \setminus \varphi(X)$. This also has smooth boundary, and again Edtmair's packing stability result applies. Thus one can find a collection of balls $\coprod_{i=1}^n B^4(a)$ with volume equal to $(W \setminus \varphi(X), \Omega)$ so that $\coprod_{i=1}^n B^4(b)$ symplectically embeds into $(W \setminus \varphi(X), \Omega)$ for any $b<a$. By continuity, monotonicity, and the disjoint union property of the alternative capacities, we have
	\[c_k^\Alt(X,\w ) \leq \inf_{\ell\geq 0} \left[c_{k+\ell}^\Alt(W,\Omega) - c_{\ell}^\Alt\left(\coprod_{i=1}^n B^4(a)\right)\right] . \]
	Since $(W,\Omega)$ and the union of balls satisfy a Weyl law (the former by \Cref{thm:alt_fin}), this yields
	\[\lim_{k\to \infty} \frac{c_k^\Alt(X,\w)^2}{4k} \leq \vol(W,\Omega)-n\vol(B^4(a))=\vol(X,\w).\]
	Additionally, because the Weyl law for $(W,\Omega)$ and the union of balls have bounded error term (again from \Cref{thm:alt_fin}), we have 
	\[e_k^\Alt(X,\w) \leq \inf_{\ell\geq 0} \left[\sqrt{4(k+\ell) \vol(W,\Omega)}-\sqrt{4\ell n\vol(B^4(a))}-\sqrt{4k \vol(X,\w)} \right ]+O(1).\]
	The right hand side admits a $k$-independent upper bound, as in \cite[\S10]{EdtPack}. 
\end{proof}

\subsubsection{A question concerning equidistribution}

One notes that the preceding proof of \Cref{thm:alt_fin} suggests that when $k$ is very large, the holomorphic curves realizing the $k$th alternative capacity represent a homology class very close to a multiple of $\mathrm{PD}[\w]$. A similar observation and pertinent question appear in the work of Li--Ning \cite[p.\! 8403]{LNalg}.

We make two points of comparison. First, in \cite{IriEqui}, Irie uses ECH spectral invariants to establish that $C^\infty$-generically the Reeb orbits of a 3-dimensional Reeb flow are equidistributed. He asks whether in fact the orbit sets realizing the ECH spectral invariants are also asymptotically equidistributed. More precisely, let $(Y,\xi)$ be a contact 3-manifold and let $\Gamma \in H_1(Y;\Z)$ be such that $c_1(\xi)+2\mathrm{PD}(\Gamma)$ is torsion. Consider a sequence of classes $\sigma_k \in ECH(Y,\xi,\Gamma)$ with relative grading $I(\sigma_k,\sigma_0)=2k$. Consider the spectral invariant 
\[c_\sigma(Y,\lambda) = \inf \{L >0:  \sigma \in \im (\iota_L:ECH^{<L}(Y,\lambda,\Gamma) \to ECH(Y,\lambda,\Gamma))\},\]
where $\iota_L$ is the map induced by inclusion of the subcomplex $ECC^{<L}\subset ECC$.
Standard arguments guarantee there is some orbit set $C_k$ with $c_\sigma(Y,\lambda) = \mathcal{A}(C_k)$. Irie asks the following.
\begin{question}[{\cite[\S 1.3]{IriEqui}}]
	For a $C^\infty$-generic contact form $\lambda$, does $C_k/\sqrt{2k}$ weakly converge as a current to $\dd\lambda/\sqrt{\vol(Y,\lambda)}$? I.e. for any one-form $\alpha$ is
\[\lim_{k\to\infty} \frac{1}{\sqrt{2k}} \int_{C_k} \alpha = \frac{1}{\sqrt{\vol(Y,\lambda)}} \int_Y \alpha \wedge \dd\lambda?\]
\end{question}
This question is also related to Hutchings' Ruelle invariant conjecture \cite{HutRuelle}. Of course, assuming they are finite and satisfy a Weyl law, one can pose the same question for the alternative ECH capacities and the orbit sets which represent them.

As a second comparison, Donaldson establishes a kind of equidistribution for his Donaldson divisors \cite[Prop.\!\ 40]{Don}. Namely, if $(X,\w)$ is a closed integral symplectic 4-manifold and if $W_k$ is a Donaldson divisor Poincar\'e dual to $k[\w]$ and constructed as the zero set of an asymptotically holomorphic section of a line bundle, then Donaldson shows that for any two-form $\psi$,
\[\lim_{k\to\infty} \frac{1}{k} \int_{W_k} \psi  = \int_X \psi \wedge \w.\]

We can pose a four-dimensional version of Irie's question, and view Donaldson's result as some positive evidence.

\begin{question}\label{q:alt_equi}
	Let $(X,\w)$ be a closed symplectic 4-manifold with $b_2^+=1$. Consider a generic $J\in \mathcal{J}(X,\w)$ and let $C_k$ be a $J$-holomorphic curve so that $c_k^\Alt(X,\w) = \int_{C_k} \w$. Then is the following property always true (or maybe true if $\w$ is $C^\infty$-generic)?
	\begin{equation}\label{eq:alt_equi}\text{For any $2$-form $\psi$,} \quad \lim_{k\to\infty} \frac{1}{\sqrt{k}}\int_{C_k}\psi = \frac{1}{\sqrt{\vol(X,\w)}} \int_X \psi \wedge \w.
	\end{equation}	
\end{question}

For $\C P^2$, one can take the currents $C_k$ realizing $c_k^\Alt$ to be Donaldson divisors. So Donaldson's result implies \eqref{eq:alt_equi} holds, at least for certain choices of $C_k$,  on $\C P^2$.

\subsection{More on the asymptotics of $e_k^\Alt$}
Once we know that the error term in the Weyl law is bounded, we can occupy ourselves attempting to determine its asymptotic behaviour. Hutchings has conjectured \cite[Conj.\!\ 1.5]{HutRuelle} that for a generic star-shaped domain $(X,\w)$, the subleading asymptotics of the ECH Weyl law satisfy 
\[\lim_{k\to\infty} e_k^\ECH(X,\w) = -\frac{1}{2}\mathrm{Ru}(X).\]
Here $\mathrm{Ru}(X)$ denotes the \emph{Ruelle invariant}, which is a dynamical measure of the average rotation of trajectories of the Reeb flow of $\pd X$. Hutchings proves this conjecture for generic convex and concave toric domains, but very little is known beyond that setting. In addition to offering new connections between ECH and Reeb dynamics, this conjecture has applications to obstructing full fillings (see \cite[Cor.\! 1.13]{HutRuelle}).

Beyond the setting of star-shaped domains, we may wish to understand the subleading asymptotic behaviour of the ECH capacities of a general Liouville domain, or more generally the subleading asymptotics of ``$U$-towers" of ECH spectral invariants as in \cite{Weyl} (c.f.\! Question 1.8 in \cite{EdtPack}). And of course, the elementary alternative capacities and elementary spectral invariants \cite{Hut26} too. The Ruelle invariant is only defined for Reeb flows on integer homology 3-spheres, so it is not clear what to expect outside this setting. We now make a conjecture about the asymptotics of $e_k^\Alt$ for closed 4-manifolds, which we hope can motivate further study of the general subleading asymptotics of ECH-type spectral invariants. 

Define a pseudonorm $\rho : H^2(X;\R)\to\R$ by $\rho(x)=0$ if $\lambda x$ is not in the integral lattice for any $\lambda>0$, and $\rho(x)=1$ if $x$ is a primitive vector in the integral lattice.

\begin{conjecture}\label{conj_ek}
	Let $(X,\w)$ be a closed symplectic 4-manifold with $b_2^+=1$. Then		
	\begin{align*}
			\liminf_{k\to \infty} e_k^\Alt(X,\w) &= -\frac{1}{2}  [\w] \cdot c_1(TX,\w)\\
			 \limsup_{k\to \infty} e_k^\Alt(X,\w) &= -\frac{1}{2}  [\w] \cdot c_1(TX,\w)+ \rho([\w]).
		\end{align*}
In particular, the sequence $e_k^\Alt(X,\w)$ converges if and only if $(X,\w)$ is not rational.
\end{conjecture}
This conjecture can be explicitly verified for some simple cases like symplectic forms on $\C P^2$ and its one- or two-fold blow-up, or on $S^2\times S^2$. Note that \Cref{conj_ek} in conjunction with Hutchings' Ruelle conjecture \cite{HutRuelle} would imply that if $D\subset \R^4$ is a (generic) starshaped domain with smooth boundary which fully fills a closed symplectic 4-manifold $(X,\w)$ with $b_2^+=1$, then
\begin{equation}\label{eq:Ruconj}
	\mathrm{Ru}(\pd D,\lambda_{\std}) \geq  [\w] \cdot c_1(TX,\w).
\end{equation} 
Here, we say $D$ \emph{fully fills} $(X,\w)$ if $\vol(D)=\vol(X,\w)$ and for any $\lambda\in (0,1)$, there exists a symplectic embedding  $(\lambda D,\w_\std) \hookrightarrow (X,\w)$. By Edtmair's recent work \cite{EdtPack}, the ellipsoid 
\[E_a= E\left(\sqrt{\vol(X,\w)a^{-1}},\sqrt{\vol(X,\w)a}\right)\]
fully fills $(X,\w)$ closed for any $a$ sufficiently large. As $a$ grows, $\mathrm{Ru}(\pd E_a,\lambda_\std)$ will go to infinity, so this is consistent with the conjectural inequality \eqref{eq:Ruconj}.

We conclude with the following partial results towards \Cref{conj_ek}.

\begin{proposition}\label{prop:ekconj}
	Let $(X,\w)$ be a closed rational symplectic 4-manifold with $b_2^+=1$. Then
	\begin{align}\
		\liminf_{k\to\infty} e_k^\Alt(X,\w) &\leq -\frac{1}{2}  [\w] \cdot c_1(TX,\w) \label{eq:ek1}\\
		\limsup_{k\to\infty} e_k^\Alt(X,\w) &\leq -\frac{1}{2}  [\w] \cdot c_1(TX,\w)+ 2\frac{\vol(X,\w)}{\rho([\w])}.\label{eq:ek2}
	\end{align}
	If in addition $c_1(TX,\w) = \tau[\w]$ for some $\tau\in\R$, then
	\begin{equation}\label{eq:ek3}
		\liminf_{k\to\infty} e_k^\Alt(X,\w) = -\frac{1}{2}  [\w] \cdot c_1(TX,\w) = -\tau \vol(X,\w).
	\end{equation}
\end{proposition}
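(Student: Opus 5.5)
The plan is to run the Gromov--Taubes upper-bound argument from the proof of \Cref{thm:alt_fin}, now along the ray of integral classes through $[\w]$ with the finest possible spacing. For the lower bound \eqref{eq:ek3}, I would use a Fredholm-index argument combined with the light cone inequality. I interpret $\rho$ as extended homogeneously, so that $\rho(sa)=s$ when $a$ is primitive. Concretely, since $\w$ is rational, write $[\w]=s\,a$ with $a\in H^2(X;\Z)$ primitive and $s=\rho([\w])>0$. Then
\[\vol=\vol(X,\w)=\tfrac{s^2}{2}a\cdot a \qquad\text{and}\qquad \kappa:=[\w]\cdot c_1(TX,\w)=s\,a\cdot c_1.\]

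\textbf{Upper bounds \eqref{eq:ek1}, \eqref{eq:ek2}.} First, I would show, as in Usher's computation used in the proof of \Cref{thm:alt_fin}, that for all $n$ large
\[SW_+(X,\mathrm{PD}(na),\gamma_1,\ldots,\gamma_{b_1})=\pm1.\]
This uses wall-crossing and charge conjugation once $n$ is large enough that the analogue of \eqref{eq:posdimcond} holds. The class $na$ is a positive multiple of $[\w]$, so it pairs positively with every $\w$-symplectic $(-1)$-sphere, and Taubes' $SW=\mathrm{Gr}$ applies; Li--Liu's $\mathrm{Gr}'$ would serve as a fallback. \Cref{lem:GrAlt} then gives $c_k^\Alt\le E_n:=\w(\mathrm{PD}(na))=n\,s\,a\cdot a=2n\vol/s$ for all
\[k\le k_n:=\tfrac12\big(n^2a\cdot a+n\,a\cdot c_1-b_1\big).\]
Next I would expand
\[\sqrt{4\vol k_n}=\sqrt{s^2 a\cdot a\,(n^2a\cdot a+n\,a\cdot c_1-b_1)}=E_n+\tfrac{s}{2}a\cdot c_1+O(1/n).\]
This gives $e_{k_n}\le-\tfrac12\kappa+O(1/n)$, which yields \eqref{eq:ek1} along the subsequence $k=k_n$. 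For $k_{n-1}<k\le k_n$ I would bound
\[e_k\le E_n-\sqrt{4\vol k_{n-1}}=(E_n-E_{n-1})-\tfrac12\kappa+O(1/n)=\tfrac{2\vol}{s}-\tfrac12\kappa+O(1/n),\]
which is \eqref{eq:ek2}.

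\textbf{Lower bound in \eqref{eq:ek3}.} Since $c_k^\Alt$ is a supremum over $J$, it suffices to fix a generic compatible $J$ and generic points $x_1,\ldots,x_k$, and to bound below the energy of any $u\in\mathcal M^J(X,\w;x_1,\ldots,x_k)$. I would replace each component of $u$ by its underlying simple curve, which only lowers the energy and keeps the points. Then I would assign to each simple component $C_i$ the number $k_i$ of points it passes through, with $\sum k_i\ge k$. Genericity gives $\ind(C_i)\ge 2k_i$, and the adjunction inequality $I(A_i)\ge\ind(C_i)$ used in \Cref{lem:fredholm} gives
\[A_i^2+c_1\cdot A_i\ge 2k_i .\]
Set $E_i=\w(A_i)>0$. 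Since $b_2^+=1$ and $[\w]^2>0$, the light cone (Hodge index) inequality gives $A_i^2\,[\w]^2\le([\w]\cdot A_i)^2$, that is, $A_i^2\le E_i^2/(2\vol)$. Using $c_1=\tau[\w]$ we also have $c_1\cdot A_i=\tau E_i$. Summing over components and using $\sum E_i^2\le(\sum E_i)^2$, the total energy $E$ satisfies
\[2k\le \frac{E^2}{2\vol}+\tau E .\]
Solving the quadratic, $E\ge\sqrt{4\vol k+\tau^2\vol^2}-\tau\vol\ge\sqrt{4\vol k}-\tau\vol$. Hence $e_k\ge-\tau\vol$ for every $k$, and since $-\tfrac12\kappa=-\tfrac12\tau\cdot 2\vol=-\tau\vol$, combining with \eqref{eq:ek1} gives \eqref{eq:ek3}.

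\textbf{Main obstacle.} I expect the main difficulty to be the nonvanishing of $SW_+$ for \emph{every} large multiple $na$ of the primitive class, rather than only for multiples of a rescaled integral form. This is what makes the spacing $E_n-E_{n-1}=2\vol/s$ sharp, and it requires redoing Usher's wall-crossing computation with $[\w]$ replaced by the primitive class $a$. A secondary point is making the genericity step in the lower bound rigorous: the relevant moduli spaces must have the expected dimension for a single $J$ chosen simultaneously generic for all point configurations. This should follow from the somewhere-injective transversality already used in \Cref{lem:fredholm}.
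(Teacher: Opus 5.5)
Your proposal is correct and follows essentially the paper's route. The upper bounds come from nonvanishing Gromov--Taubes invariants in multiples of a class on the ray through $[\w]$, fed into \Cref{lem:GrAlt}; you work directly with the primitive class, whereas the paper proves the bound for an integral form and then rescales, which amounts to the same thing. The lower bound is the same Fredholm-index plus light-cone estimate, except that you apply it component by component and recombine with $\sum E_i^2\le(\sum E_i)^2$, while the paper bounds $I([u])$ for the total class $[u]$ directly; both are valid, and yours gives $e_k^\Alt\ge-\tau\vol(X,\w)$ for every $k$.
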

Note when $(X,\w)$ is rational and can be rescaled to have volume one, \eqref{eq:ek2} agrees with the claimed value of $\limsup e_k^\Alt$ in \Cref{conj_ek}. Examples where $c_1(TX,\w) = \tau[\w]$ and \eqref{eq:ek3} applies include monotone/Fano symplectic 4-manifolds, like some blow-ups of $\C P^2$, as well as surfaces with torsion first Chern class like the Enriques and hyperelliptic surfaces. Negative monotone examples (i.e. with $\tau<0$) include surfaces of general type with $p_g=0$ and ample canonical bundle, such as the Godeaux surfaces of  Craighero--Gattazzo \cite{DW}.

\begin{proof}[Proof of \Cref{prop:ekconj}]
	The first two inequalities \eqref{eq:ek1} and \eqref{eq:ek2} will follow from our computations in the proof of \Cref{thm:alt_fin}. Rescale $\w$ to be integral and set 
	\[k_n = n^2 \vol +\frac{n}{2} \kappa -\frac{1}{2}b_1,\]
	where $\vol=\vol(x,\w)$ and $\kappa = \langle [\w]\cup c_1(TX,\w),[X]\rangle $. For $n$ sufficiently large, we had that
		\[c_k^\Alt(X,\w)  \leq 2n\vol,\]
		whenever $k_{n-1} < k \leq k_n$. Then, focusing on the sequence of terms $k_n$,
	\begin{align*}
		\liminf_{k\to\infty} e_k^\Alt(X,\w) &\leq \liminf_{n\to\infty} e_{k_n}^\Alt(X,\w)\\
		& \leq \liminf_{n\to \infty} 2n\vol -\sqrt{4k_n\vol}\\
		&= \liminf_{n\to \infty}\frac{4n^2\vol^2 -4k_n\vol}{2n\vol + \sqrt{4k_n\vol}}\\
		&= \liminf_{n\to \infty} \frac{-2n\kappa\vol(X,\w)+ 2b_1\vol}{2n\vol + \sqrt{4n^2\vol^2 +2n\kappa\vol -2b_1\vol}}\\
		&= \liminf_{n\to \infty} \frac{-2\kappa\vol+2n^{-1}b_1\vol}{2\vol + \sqrt{4\vol^2 +2n^{-1}\kappa\vol -2n^{-2}b_1\vol}}\\
		&= -\frac{\kappa}{2}.
	\end{align*}
	This gives \eqref{eq:ek1}. By the same kind of computation, focusing on the terms $k_{n-1}$,
\begin{align*}
		\limsup_{k\to\infty} e_k^\Alt(X,\w) &\leq \limsup_{n\to\infty} \frac{4n^2\vol^2 -4k_{n-1}\vol}{2n\vol + \sqrt{4k_{n-1}\vol}}\\
		&= \limsup_{n\to \infty} \frac{4(2n-1)\vol^2 +2b_1\vol-2(n-1)\kappa\vol}{2n\vol + \sqrt{4(n-1)^2\vol^2 +2(n-1)\kappa\vol -2b_1\vol}}\\
		&= \limsup_{n\to \infty} \frac{4(2-n^{-1})\vol^2 +2b_1n^{-1}\vol-2(1-n^{-1})\kappa\vol}{2\vol + \sqrt{4(1-n^{-1})^2\vol^2 +2(n^{-1}-n^{-2})\kappa\vol -2b_1n^{-2}\vol}}\\
		&= 2\vol(X,\w) -\frac{\kappa}{2}.
	\end{align*}
This holds for any integral symplectic form and we may as well rescale so that $[\w]	$ is primitive. Then, given $\w' = \lambda \w$ for $\lambda>0$, $\rho([\w']) =\lambda$ and
\begin{align*}
	\limsup_{k\to\infty} e_k^\Alt(X,\w') &= \lambda\limsup_{k\to\infty} e_k^\Alt(X,\w)\\
	& \leq 2\lambda \vol(X,\w) - \frac{\lambda}{2} [\w] \cdot c_1(TX,\w)\\
	&= \frac{2\vol(X,\w')}{\rho([\w'])}-\frac{1}{2} [\w'] \cdot c_1(TX,\w').
\end{align*}

To prove \eqref{eq:ek3}, extend $\w$ to an orthogonal basis $\{\mathrm{PD}[\w], e_2, \ldots, e_{b_2}\}$ for $H_2(X;\R)$ so that $e_i^2=-1$ for $2\leq i \leq b_2$. Consider some class $A\in H_2(X;\Z)$ which can be expanded in this basis as
\[A = a_1 [\w] + a_2 e_2+ \cdots + a_{b_2} e_{b_2} \quad \text{for $a_i\in\R$.}\]
Then
\begin{align*}
	A^2&= 2a_1^2 \vol(X,\w) -a_2^2-\cdots - a_{b_2}^2\\
	\w(A) &= 2a_1 \vol(X,\w) \\
	c_1(TX,\w) ( A) &= 2\tau a_1 \vol(X,\w). 
\end{align*}
This gives an upper bound on the Gromov--Taubes index
\[I(A)= A^2 +c_1(TX,\w)\cdot A \leq 2a_1^2 \vol(X,\w) +2\tau a_1 \vol(X,\w).\]
Given a generic collection of $k$ points in $X$ and a generic $\w$-compatible almost-complex structure $J$, standard transversality results imply any somewhere-injective $J$-holomorphic curve $u\in \mathcal{M}^J(X,\w;x_1,\ldots, x_k)$ must have Fredholm index $\ind(u)\geq 2k$. From the adjunction formula, this implies $I([u])\geq 2k$. As in \cite[Rem.\! 2]{HutAlt}, we can restrict to somewhere-injective curves in the definition of the alternative capacities. We conclude that
\begin{equation*}
c_k^\Alt (X,\w) \geq \inf_{\substack{A\in H_2(X) \\ I(A)\geq 2k}} \w(A).	
\end{equation*}
Combined with our above computations, we have that $c_k^\Alt(X,\w)$ is at least the infimum of $2a \vol(X,\w)$ over all $a >0$ which satisfy 
\[2a^2 \vol(X,\w) + 2\tau a\vol(X,\w) \geq 2k. \]
Completing the square, this last inequality implies
\[\left(a+\frac{\tau}{2}\right)^2 \geq \frac{k}{\vol(X,\w)} + \frac{\tau^2}{4},\]
which in turn gives, for $k$ sufficiently large,
\[a \geq \sqrt{\frac{k}{\vol(X,\w)} + \frac{\tau^2}{4}}-\frac{\tau}{2}.\]
Thus for $k$ large,
\[c_k^\Alt(X,\w) \geq \sqrt{4k\vol(X,\w) + \tau^2 \vol(X,\w)^2}-\tau\vol(X,\w).\]
Hence
\begin{align*}
	\liminf_{k\to\infty} e_k^\Alt(X,\w) &\geq \liminf_{k\to\infty}\sqrt{4k\vol(X,\w) + \tau^2 \vol(X,\w)^2}-\tau\vol(X,\w)-\sqrt{4k\vol(X,\w)}\\
	&=\liminf_{k\to\infty} \frac{\tau^2 \vol(X,\w)^2}{\sqrt{4k\vol(X,\w)+\tau^2\vol(X,\w)^2}+ \sqrt{4k\vol(X,\w)}} -\tau\vol(X,\w)\\
	&= -\tau\vol(X,\w).
\end{align*}
Combined with \eqref{eq:ek1}, this gives \eqref{eq:ek3}.
\end{proof}


\begin{thebibliography}{40}
		
	\bibitem[Bei26]{Bei} G. Beiner, Infinite ECH capacities and Anosov flows, arXiv preprint 2606.28316 (2026), 49 pp.
	
	\bibitem[Bir01]{Bir} P. Biran, Lagrangian barriers and symplectic embeddings, Geom. Funct. Anal. {\bf 11} (2001), no.~3, 407--464.
		
	\bibitem[BHO]{BHO} O. Bu\c se, R.~K. Hind and E. Opshtein, Packing stability for symplectic 4-manifolds, Trans. Amer. Math. Soc. {\bf 368} (2016), no.~11, 8209--8222.	
			
	\bibitem[CW21]{CW} J. Chaidez, B. Wormleighton. ECH embedding obstructions for rational surfaces, arXiv preprint 2008.10125	(2021), 23 pp.
		
	\bibitem[Che26a]{Che} G. Chen, Symplectic embeddings of balls into disk prequantization bundles, J. Math. Soc. Japan {\bf 78} (2026), no.~2, 381--408.	
	
	\bibitem[Che26b]{CheU} G. Chen, Computing the U maps on ECH of prequantization bundles, arXiv preprint 2608.16625 (2026), 58 pp.
			
	\bibitem[CCG+]{CCGFHR}  K. Choi, D. Cristofaro-Gardiner, D. Frenkel, M. Hutchings, and   V. G. B. Ramos,  Symplectic embeddings into four-dimensional concave toric domains,  J. Topol. {\bf 7} (2014), no.~4, 1054--1076.	
		
	\bibitem[CM05]{CM} K. Cieliebak and K. Mohnke, Compactness for punctured holomorphic curves, J. Symplectic Geom. {\bf 3} (2005), no.~4, 589--654.	

	\bibitem[CG19]{CG} D. Cristofaro-Gardiner, Symplectic embeddings from concave toric domains into convex ones, J. Differential Geom. {\bf 112} (2019), no.~2, 199--232.
		
	\bibitem[CGH16]{CGH} D. Cristofaro-Gardiner and M. Hutchings, From one Reeb orbit to two, J. Differential Geom. {\bf 102} (2016), no.~1, 25--36.
	
	\bibitem[CGHR]{Weyl} D. Cristofaro-Gardiner, M. Hutchings, and V. G. B. Ramos, The asymptotics of ECH capacities, Invent. Math. {\bf 199} (2015), no.~1, 187--214.
			
	\bibitem[DHL]{DHL} B. Dai, C.-I. Ho and T.-J. Li, Nonorientable Lagrangian surfaces in rational 4-manifolds, Algebr. Geom. Topol. {\bf 19} (2019), no.~6, 2837--2854.
		
	\bibitem[DW99]{DW} I.~V. Dolgachev and C. Werner, A simply connected numerical Godeaux surface with ample canonical class, J. Algebraic Geom. {\bf 8} (1999), no.~4, 737--764.	
		
	\bibitem[Don96]{Don} S. Donaldson, Symplectic submanifolds and almost complex geometry, J. Differential Geom. {\bf 44} (1996), no.~4, 666--705.
		
	\bibitem[Edt25a]{EdtPFH} O. Edtmair, An elementary alternative to PFH spectral invariants, J. Symplectic Geom. {\bf 23} (2025), no.~3, 511--574.	
	
	\bibitem[Edt25b]{EdtPack} O. Edtmair, Packing stability and the subleading asymptotics of symplectic Weyl laws, arXiv preprint 2509.15390 (2025), 62 pp.
		
	\bibitem[EGH]{EGH} Y. Eliashberg, A. Givental and H. Hofer, Introduction to symplectic field theory, Geom. Funct. Anal. {\bf 2000}, Special Volume, Part II, 560--673.
		
			
	\bibitem[ET98]{ET} Y. Eliashberg and W. Thurston, \emph{Confoliations}, Vol. 13. University Lecture Series. American Mathematical Society, 1998.
		
		\bibitem[FR22]{FR} B. Ferreira and V. G. B.  Ramos, Symplectic embeddings into disk cotangent bundles, J. Fixed Point Theory Appl. {\bf 24} (2022), no.~3, Paper No. 62, 31 pp.
	
	\bibitem[FRV]{FRV} B. Ferreira, V.~G.~B. Ramos and A. Vicente, Gromov width of the disk cotangent bundle of spheres of revolution, Adv. Math. {\bf 487} (2026), Paper No. 110761, 41 pp.
	
	\bibitem[FH22]{FH} J. Fish and H. Hofer, Almost existence from the feral perspective and some questions, Ergodic Theory Dynam. Systems {\bf 42} (2022), no.~2, 792--834.	
		
	\bibitem[GP20]{GP} S. Ganatra and D.~M. Pomerleano, Symplectic cohomology rings of affine varieties in the topological limit, Geom. Funct. Anal. {\bf 30} (2020), no.~2, 334--456.	
		
	\bibitem[GS24]{GS} S. Ganatra and K. Siegel, On the embedding complexity of Liouville manifolds, J. Differential Geom. {\bf 127} (2024), no.~3, 1019--1082.	
		
	\bibitem[GS09]{GaS} D.~T. Gay and A.~I. Stipsicz, Symplectic surgeries and normal surface singularities, Algebr. Geom. Topol. {\bf 9} (2009), no.~4, 2203--2223.	
		
	\bibitem[Gir17]{Gir} E. Giroux, Remarks on Donaldson's symplectic submanifolds, Pure Appl. Math. Q. {\bf 13} (2017), no.~3, 369--388.
	
	\bibitem[HZ94]{HZ} H. Hofer and E. Zehnder, {\it Symplectic invariants and Hamiltonian dynamics}, reprint of the 1994 edition,  Modern Birkh\"auser Classics, Birkh\"auser Verlag, Basel, 2011.
	
	\bibitem[Hut02]{Hutindineq} M. Hutchings, An index inequality for embedded pseudoholomorphic curves in symplectizations, J. Eur. Math. Soc. (JEMS) {\bf 4} (2002), no.~4, 313--361.
	
	\bibitem[Hut09]{Hutind} M. Hutchings, The embedded contact homology index revisited, in {\it New perspectives and challenges in symplectic field theory}, 263--297, CRM Proc. Lecture Notes, 49, Amer. Math. Soc., Providence, RI.
	
	\bibitem[Hut11]{HutQ} M. Hutchings, Quantitative embedded contact homology, J. Differential Geom. {\bf 88} (2011), no.~2, 231--266.
	
	\bibitem[Hut14]{Hutnotes} M. Hutchings, Lecture notes on embedded contact homology, in
	\emph{Contact and Symplectic Topology}, 389--484, Bolyai Soc. Math. Stud. 26 (2014), J\'anos Bolyai Mathematical Society, Budapest, Springer.
	
	\bibitem[Hut22a]{HutAlt} M. Hutchings, An elementary alternative to ECH capacities, Proc. Natl. Acad. Sci. U.S.A. \textbf{119} (2022), no. 35, 10 pp..
	
	\bibitem[Hut22b]{HutRuelle} M. Hutchings, ECH capacities and the Ruelle invariant, J. Fixed Point Theory Appl. {\bf 24} (2022), no.~2, Paper No. 50, 25 pp.
	
	\bibitem[Hut24]{Hutqc} M. Hutchings, Elementary spectral invariants and quantitative closing lemmas for contact three-manifolds, J. Mod. Dyn. {\bf 20} (2024), 635--662.
	
	\bibitem[Hut26]{Hut26} M. Hutchings, Elementary spectral invariants and three-dimensional Reeb dynamics, arXiv preprint 2605.12958 (2026), 51 pp.
	
	\bibitem[Hut]{HutTQFT} M. Hutchings, Embedded contact homology as a (symplectic) field theory, in preparation.
	
	\bibitem[Iri15]{Iri} K. Irie, Dense existence of periodic Reeb orbits and ECH spectral invariants, J. Mod. Dyn. {\bf 9} (2015), 357--363.
	
	\bibitem[Iri21]{IriEqui} K. Irie, Equidistributed periodic orbits of $C^\infty$-generic three-dimensional Reeb flows, J. Symplectic Geom. {\bf 19} (2021), no.~3, 531--566.
	
	\bibitem[Lee04]{Lee} J. Lee, Family Gromov--Witten invariants for K\"ahler surfaces, Duke Math. J. {\bf 123} (2004), no.~1, 209--233.
		
	\bibitem[LP07]{LP} J. Lee and T.~H. Parker, A structure theorem for the Gromov--Witten invariants of K\"ahler surfaces, J. Differential Geom. {\bf 77} (2007), no.~3, 483--513.
	
	\bibitem[LL99]{LL} T.-J. Li and A.-K. Liu, The equivalence between ${\rm SW}$ and ${\rm Gr}$ in the case where $b^+=1$, Internat. Math. Res. Notices {\bf 1999}, no.~7, 335--345.
	
	\bibitem[LM19]{LM} T.-J. Li and C.~Y. Mak, Symplectic divisorial capping in dimension 4, J. Symplectic Geom. {\bf 17} (2019), no.~6, 1835--1852.
	
	\bibitem[LN24]{LNalg} T.-J. Li and S. Ning, Algebraic capacities as tropical polynomials over the reduced $c_1$-positive symplectic cone, Trans. Amer. Math. Soc. {\bf 377} (2024), no.~12, 8381--8409.
	
	\bibitem[LN26]{LN} T.-J. Li and S. Ning, Symplectic log Kodaira dimension $-\infty$, affine-ruledness, and unicuspidal rational curves, arXiv preprint 501.14668 (2026), 53 pp..
	
	\bibitem[MNR+]{ECHplumb} A. Marinkovi\'c, J. Nelson, A. Rechtman, L. Starkson, S. Tanny, and L. Wang, Properties of contact toric structures and concave boundaries of linear plumbings, arXiv preprint 2501.08451 (2026), to appear in Trans. Am. Math. Soc., 68 pp.
	
	\bibitem[MT25]{MT} T. Mark and B. Tosun, On Weinstein domains in symplectic manifolds, arXiv preprint 2512.04278 (2025), 27 pp.
				
	\bibitem[McD95]{McD} D. McDuff, Lectures on Gromov invariants for symplectic $4$-manifolds, in {\it Gauge theory and symplectic geometry (Montreal, PQ, 1995)}, 175--210, NATO Adv. Sci. Inst. Ser. C: Math. Phys. Sci., 488, Kluwer Acad. Publ., Dordrecht.			
				
	\bibitem[MS96]{MS96} D. McDuff and D. Salamon, A survey of symplectic $4$-manifolds with $b^{+}=1$, Turkish J. Math. {\bf 20} (1996), no.~1, 47--60.		
	
	\bibitem[McL16]{McL} M. McLean, Reeb orbits and the minimal discrepancy of an isolated singularity, Invent. Math. {\bf 204} (2016), no.~2, 505--594.
	
	\bibitem[MR25]{MR} M. Miranda and V. G. B. Ramos, Embedded contact homology of the unit cotangent bundle of the Klein bottle, arXiv preprint 2508.06400 (2025), 70 pp.
	
	\bibitem[Ops13]{Ops} E. Opshtein, Singular polarizations and ellipsoid packings, Int. Math. Res. Not. IMRN {\bf 2013}, no.~11, 2568--2600.
	
	\bibitem[Pra25]{Pra} R. Prasad, High-dimensional families of holomorphic curves and three-dimensional energy surfaces, arXiv preprint 2405.01106 (2025), to appear in Ann. Math., 61 pp.
	
	\bibitem[Tau98]{Tau} C.~H. Taubes, The structure of pseudo-holomorphic subvarieties for a degenerate almost-complex structure and symplectic form on $S^1\times B^3$, Geom. Topol. {\bf 2} (1998), 221--332.
	
	\bibitem[Tau00]{TauSWGr} C.~H. Taubes, {\it Seiberg Witten and Gromov invariants for symplectic $4$-manifolds}, First International Press Lecture Series, 2, International Press, Somerville, MA, 2000.
	
	\bibitem[Ush12]{Ush} M. Usher, Many closed symplectic manifolds have infinite Hofer--Zehnder capacity, Trans. Amer. Math. Soc. {\bf 364} (2012), no.~11, 5913--5943.

	\bibitem[Wen15]{Wendl} C. Wendl, \emph{Lectures on holomorphic curves in symplectic and contact geometry}, Version 3.3 (2015), retrieved from \url{https://www.mathematik.hu-berlin.de/~wendl/publications.html#notes}. 

	\bibitem[Zeh86]{Zeh} E. Zehnder, Remarks on periodic solutions on hypersurfaces, in {\it Periodic solutions of Hamiltonian systems and related topics (Il Ciocco, 1986)}, 267--279, NATO Adv. Sci. Inst. Ser. C: Math. Phys. Sci., 209, Reidel, Dordrecht.

\end{thebibliography}
\end{document}